\newtheorem{theorem}{Theorem}[section]
\theoremstyle{plain}
\newtheorem{lem}[theorem]{Lemma}
\newtheorem{cor}[theorem]{Corollary}
\theoremstyle{definition}
\newtheorem{rem}{Remark}
\newtheorem{dfn}{Definition}
\newcommand{\M}{\mathcal{M}}
\newcommand{\N}{\mathcal{N}}
\newcommand{\F}{\mathcal{F}}
\newcommand{\Ll}{\mathcal{L}}
\newcommand{\I}{\mathrm{I}}
\newcommand{\E}{\mathrm{E}}
\newcommand{\X}{X}
\newcommand{\A}{A}
\newcommand{\mm}{M}
\newcommand{\nn}{N}
\newcommand{\pa}{\mathrm{PA}}
\newcommand{\ssy}{\mathrm{SSy}}
\newcommand{\Th}{\mathrm{Th}}
\newcommand{\sat}{\mathrm{Sat}}
\newcommand{\fix}{\mathrm{Fix}}
\newcommand{\p}{\mathrm{(a)_{\textbf{i}}}}
\begin{document}

\title{Self-embeddings of models of arithmetic;  fixed points, small submodels, and extendability }
\author{Saeideh Bahrami \\

{\small {bahrami.saeideh@gmail.com}}}
\maketitle

\begin{abstract}
	In this paper we will show that for every cut $ I $ of any  countable nonstandard model $ \M $ of $ \I\Sigma_{1} $,  each $ I $-small $ \Sigma_{1} $-elementary submodel  of $ \M $  is of the form of the set of fixed points of some proper initial self-embedding of $ \M $ iff  $ I $ is a strong cut of $ \M $. Especially, this feature  will provide us with some  equivalent conditions with the strongness of the standard cut in a given  countable model $ \M $ of $ \I\Sigma_{1} $. In addition, we will find some criteria for extendability of initial self-embeddings of countable nonstandard models of $ \I\Sigma_{1} $ to larger models.
\end{abstract}	

\section{Introduction}
In 1973, Harvey Friedman proved a striking result for countable nonstandard models of finite set theory, and consequently for countable models of Peano arithmetic, $ \pa $, stating that \textit{every countable nonstandard model of $ \pa $ carries a proper initial self-embedding}; here an \textit{initial self-embedding} is a self-embedding whose image is an initial segment of the ground model \cite{fr}. Afterwards, many versions of Friedman's style Theorem appeared in the literature of model theory of arithmetic (e.g. see \cite{dp} or \cite{w}).  In \cite{our}, it is shown that some results on the set of fixed points of automorphism of countable recursively saturated models of $ \pa $ can be generalized for  initial self-embeddings of countable nonstandard models of $ \I\Sigma_{1} $ (see Theorem 2.4 below). In this paper, inspired by  results about automorphisms of models of $ \pa $, we will investigate some more properties of countable models of $ \I\Sigma_{1} $ through initial self-embeddings.

In \cite{en}, Enayat generalized the notion of a \textit{small} submodel from \cite{las}, to  \textit{$ I$-small}\footnote{In his paper \cite{en}, Enayat called such submodels  $ I$-coded. The name $ I$-small is borrowed from Kossak-Schmerl's  book \cite{ksb}.} for a given cut $ I $  of a model of $ \pa $ (see Definition 1 below), and proved that:
\begin{theorem}[Enayat]
	Suppose $ \M\models\pa $ is countable, recursively saturated, and  $ I $ is a \textit{strong cut} of $ \M $. Moreover, let $ \M_{0}$ be  an $ I$-small elementary submodel of $ \M $. Then there exists some automorphism $ j $ of $ \M $ such that $ \mm_{0} $ is equal to the set of fixed points of $ j $.
\end{theorem}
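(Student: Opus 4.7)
The plan is to construct $j$ via a back-and-forth construction, building $j$ as the union of an increasing chain of finite partial elementary maps of the expanded structure $(\M, \mm_{0})$, where $\mm_{0}$ is named by a fresh unary predicate. Working in this expansion automatically forces any element of $\mm_{0}$ in the domain of a partial map to be fixed, and, at the limit, produces a bijection $j$ which is an automorphism of $\M$ fixing $\mm_{0}$ pointwise, with the room to move everything else.

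After fixing an enumeration $\mm = \{a_{n} : n < \omega\}$, I would build finite partial $(\M, \mm_{0})$-elementary maps $p_{0} \subseteq p_{1} \subseteq \cdots$ with the usual back-and-forth requirements (at even stages $a_{n} \in \mathrm{dom}(p_{n})$, at odd stages $a_{n} \in \mathrm{range}(p_{n})$), plus the additional constraint that for every $a \in \mathrm{dom}(p_{n})$ lying outside $\mm_{0}$ we have $p_{n}(a) \ne a$, and symmetrically for the range. Adding elements of $\mm_{0}$ is trivial since one may map each to itself. The non-trivial ``moving step'' is: given $a \notin \mm_{0}$ and a current finite partial map $p$, produce some $a' \ne a$ realizing the same $(\M, \mm_{0})$-type as $a$ over $\mathrm{dom}(p)$.

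The key fact enabling the moving step is a recursive-saturation property for the expansion $(\M, \mm_{0})$: every recursive type over finitely many parameters that is finitely satisfiable in $\M$ is realized. Granting this, since $\mm_{0} \preceq \M$ and $a \notin \mm_{0}$, no element of $\mm_{0}$ realizes $\mathrm{tp}(a/\mathrm{dom}(p) \cup \mm_{0})$, so the type asserting ``same $(\M,\mm_{0})$-type as $a$ over $\mathrm{dom}(p)$ but distinct from $a$'' is consistent and hence realized. This yields the required $a'$, and the back-and-forth chains together in the standard way to give an automorphism whose fixed-point set is exactly $\mm_{0}$.

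The main obstacle is precisely this enriched recursive saturation of $(\M, \mm_{0})$: ordinary recursive saturation of $\M$ alone handles only finitely many parameters from $\mm_{0}$, not $\mm_{0}$ as a predicate, so it has to be upgraded. This is exactly the point at which the two hypotheses combine. The $I$-smallness of $\mm_{0}$ supplies a code $s \in \mm$ with $\mm_{0} = \{(s)_{i} : i \in I\}$, which lets one rewrite types over $\mm_{0}$ as parametric families of types indexed by $I$. Strongness of $I$ is then the precise principle needed to uniformize finite satisfiability across such a family by an $\M$-coded object, so that ordinary recursive saturation of $\M$ can produce a realization. Once this enrichment is in hand, the moving step, and with it the entire back-and-forth, goes through routinely.
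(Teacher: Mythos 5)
Your back-and-forth strategy is in the right spirit — this is indeed how Enayat's result (and the paper's refinement, Theorem 3.4) is proved. But there are two real gaps.

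First, the framework. You work with finite partial elementary maps of the expansion $(\M,\mm_0)$ where $\mm_0$ is named by a fresh unary predicate $P$, and you claim that ``working in this expansion automatically forces any element of $\mm_0$ in the domain of a partial map to be fixed.'' That is not so: elementarity in $(\M,P)$ only constrains automorphisms to preserve $\mm_0$ \emph{setwise}, not pointwise, and — more to the point — the step ``adding elements of $\mm_0$ is trivial, just map each to itself'' is \emph{not} trivial in this framework. If $p=\{b\mapsto b'\}$ with $b,b'\notin\mm_0$ and $b\ne b'$ is $(\M,P)$-elementary, there is no reason that $p\cup\{a\mapsto a\}$ remains $(\M,P)$-elementary for an arbitrary $a\in\mm_0$: the $(\M,P)$-type of $b$ carries no information about the position of $b$ relative to particular $P$-elements (e.g.\ one can have $a<b$ but $a>b'$). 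What the construction actually needs is that the partial maps preserve $\mathcal L_A$-types \emph{over $\mm_0$}, i.e.\ with all of $\mm_0$ admitted as parameters; this is what makes ``map $a\in\mm_0$ to itself'' genuinely free, and this is the notion the paper works with (the conditions $\mathrm P(\bar u,\bar v)$ and $\mathrm Q(\bar u,\bar v)$ in the proof of Theorem 3.4 quantify over all $i\in I$, i.e.\ over all parameters $(a)_i\in\mm_0$). Your own moving step — you speak of $\mathrm{tp}(a/\mathrm{dom}(p)\cup\mm_0)$ — already tacitly switches to this stronger notion, but your stated framework does not support it.

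Second, the key lemma. Once one works with types over $\mm_0$, the crux is the ability to realize, in $\M$, certain recursive families of formulas with parameters from all of $\mm_0$. You gesture at this as ``enriched recursive saturation of $(\M,\mm_0)$'' and claim that $I$-smallness plus strongness ``uniformize finite satisfiability across such a family by an $\M$-coded object'' so that ordinary recursive saturation applies — but this is left entirely as a wave. In the paper, the analogous point is where essentially all the technical work lives: one needs the coding of back-and-forth requirements as elements of $\ssy_I(\M)$, the closure of $(I,\ssy_I(\M))$ under arithmetic comprehension (this is where strongness enters, via Kirby–Paris), a separate lemma (Lemma 3.3) to guarantee that bounded coded subsets of $\mm_0$ are already coded inside $\mm_0$, and, in the harder direction, a $\Sigma_1$-pigeonhole argument to obtain the ``moving'' constraint. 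None of this is sketched, so the central mechanism of the proof is missing. The overall plan is sound, but as written the proposal relies on an unproved and imprecisely stated saturation principle, and the framework needs to be corrected from elementarity in $(\M,P)$ to preservation of types over $\mm_0$.
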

  In section 3 of this paper, after investigating some basic properties of $ I$-small $ \Sigma_{1}$-elementary submodels  of a countable model $ \M $ of $ \I\Sigma_{1} $ for some cut $ I $ of $ \M $, we will refine the above theorem for initial self-embeddings; i.e  we will show that  $ I $ is strong in $ \M $ iff  every  $ I$-small $ \Sigma_{1}$-elementary submodel of $ \M $ is equal to the set of fixed points of some proper initial self-embedding of $ \M $.  This result also generalizes one of the main theorems of \cite{our} (see Corollary 4.3 below).

Section 4 of this paper, is devoted to the investigation of equivalent conditions to strongness of the standard cut, denoted by $ \mathbb{N} $, in a countable model of $ \I\Sigma_{1} $, through the set of fixed points of  initial self-embeddings. In \cite{ks}, it is shown that:
\begin{theorem}[Kossak-Schmerl]
Suppose $ \M $ is a countable recursively saturated model  of $ \pa $. If $ \mathbb{N} $ is not strong in $ \M $, then for every automorphism $ j $ of $ \M $ the set of fixed points of $ j $ is isomorphic to $ \M $.
\end{theorem}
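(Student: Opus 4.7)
The plan is to apply Ressayre's uniqueness theorem for countable recursively saturated models of $\pa$: any two such models are isomorphic iff they share the same complete theory and the same standard system. Since $\M$ is countable and recursively saturated, it suffices to establish that $\fix(j)$ is a countable elementary submodel of $\M$ with the same standard system and is itself recursively saturated.

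The first three of these items should form the routine portion. Countability is inherited from $\M$. Elementarity of $\fix(j)$ in $\M$ follows from the classical least-witness argument: given $\varphi(x, \bar{a})$ with $\bar{a} \in \fix(j)$ and $\M \models \exists x \, \varphi(x, \bar{a})$, the $\pa$-definable least witness $b$ must satisfy $j(b) = b$, since $j(b)$ is also a witness and uniqueness of the least witness forces equality. For the agreement of standard systems, the inclusion $\ssy(\fix(j)) \subseteq \ssy(\M)$ is immediate, and the reverse follows from elementarity combined with $\mathbb{N} \subseteq \fix(j)$ and the fact that $\mathbb{N}$ sits as an initial segment of both models.

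The crux, and the expected main obstacle, is the recursive saturation of $\fix(j)$; this is where non-strongness of $\mathbb{N}$ must be invoked in an essential way. Given a recursive type $p(x, \bar{c})$ with $\bar{c} \in \fix(j)$ that is finitely realized in $\fix(j)$, recursive saturation of $\M$ yields a realization $b \in \M$; because $\bar{c}$ is fixed, each $j^{k}(b)$ with $k \in \mathbb{Z}$ also realizes $p$. The plan is to exploit the failure of strongness by invoking an $\M$-coded function $f$ on a nonstandard initial segment whose restriction to $\mathbb{N}$ has image cofinal below some nonstandard element. Using such an $f$, one can code inside $\M$ a sufficiently fine array of candidate realizations of $p$ that interacts with the gap structure of $\M$ under $j$, and an overspill argument should then extract a candidate lying in $\fix(j)$. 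The delicate step is ensuring the chosen candidate is genuinely fixed by $j$ rather than merely close to its $j$-image; this is precisely the point that fails in the strong case, where $\fix(j)$ can be a proper short initial segment of $\M$ and the conclusion of the theorem breaks down.
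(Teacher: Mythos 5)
Note first that the paper states this result (Theorem~1.2) as a citation of Kossak--Schmerl and does not prove it, so there is no internal argument to compare against; your proposal can only be judged on its own terms. Your overall reduction --- to the uniqueness theorem for countable recursively saturated models of $\pa$ with the same theory and the same standard system, together with elementarity of $\fix(j)$ via the least-witness argument --- is the right frame and matches the standard route. But the proposal has two concrete gaps.

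First, the inclusion $\ssy(\M)\subseteq\ssy(\fix(j))$ does \emph{not} follow from elementarity and $\mathbb{N}\subseteq_{e}\fix(j)$, as you assert. When $\mathbb{N}$ \emph{is} strong, a countable recursively saturated $\M$ admits automorphisms $j$ with $\fix(j)=\mathrm{K}(\M)$ (cf.\ Theorem~1.1 and Theorem~2.4(3)); here $\mathrm{K}(\M)\prec\M$ contains $\mathbb{N}$ as an initial segment, yet $\ssy(\mathrm{K}(\M))\subsetneq\ssy(\M)$. The equality of standard systems in the present theorem genuinely requires the non-strongness hypothesis, and the mechanism is exactly the one the paper uses in Corollary~4.2: by (an automorphism analogue of) Lemma~2.5(2), for any $a\in M$ there is $b\in\fix(j)$ with $\Th_{\Sigma_{1}}(\M;a)\subseteq\Th_{\Sigma_{1}}(\M;b)$, which, since $n\E a$ and $\neg(n\E a)$ are $\Delta_{0}$ for standard $n$, forces $\mathbb{N}\cap a_{\E}=\mathbb{N}\cap b_{\E}$.

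Second, the recursive-saturation step is a declaration of intent rather than an argument. You acknowledge that ``the delicate step is ensuring the chosen candidate is genuinely fixed by $j$,'' but that delicate step is the entire content of the theorem, and nothing in the sketch (coding candidate realizations via an $\M$-coded function and appealing to overspill) addresses how an overspill witness, chosen inside $\M$, lands in $\fix(j)$. What is actually needed is a type-matching lemma: if $\mathbb{N}$ is not strong, then for every $a\in M$ and $\bar{c}\in\fix(j)$ there exists $b\in\fix(j)$ realizing the same complete type over $\bar{c}$ as $a$. Once you have this, recursive saturation of $\fix(j)$ is immediate (realize the recursive type in $\M$, then replace the realizer by a fixed element of the same type over the parameters). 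The paper's Lemma~2.5(2) is the $\Sigma_{1}$ analogue of this statement; upgrading it to full elementary types in the $\pa$-automorphism setting goes through an inductive satisfaction class for $\M$, and that ingredient is entirely absent from your sketch.
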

In Corollary 4.2, we will show that for every countable nonstandard model $ \M $ of $ \I\Sigma_{1} $, if $ \mathbb{N} $ is not strong in $ \M$, then the set of fixed points of any initial self-embedding  $ j $ of $ \M $   is either a model of $ \neg\mathrm{B}\Sigma_{1} $, or is isomorphic to some proper  initial segment  of $ \M $. Then, we conclude that $ \mathbb{N} $ is strong in a countable recursively saturated model $ \M $ of $ \pa $ iff there exists some proper initial self-embedding $ j $ of $ \M $ such that  the set of fixed points of $ j $ is small in $ \M $ and consequently it is not isomorphic to any proper initial segment of $ \M $.

  In section 5, we will study the extendability of initial embeddings of models of $ \I\Sigma_{1} $ to larger models. In particular, we will prove that any isomorphism between two $ \Sigma_{1}$-elementary initial segment  of a countable nonstandard model $ \M $ of $ \I\Sigma_{1} $ is extendable to some initial self-embedding of $ \M $ iff it preserves coded subsets (for the case of automorphisms of countable recursively saturated models of $ \pa $ this condition is only a necessary condition for extendability to larger models \cite{kos-kot}).

%\noindent \textbf{Acknowledgment.}

\section{Preliminaries}
In this section we will review some definitions and results which are used through this paper. All unexplained notions can be found in \cite{hp} and \cite{kaye}.
\begin{itemize}
\item  Through this paper, we will work in the language of arithmetic $ {\mathcal{L}_{ A}:=\{+,.,<,0,1\}} $. For a given class $ \Gamma $ of $ \mathcal{L}$-formulas (where $ \mathcal{L}\supseteq\mathcal{L}_{ A} $), $ \I\Gamma $ is the fragment of $\mathrm{PA}^{*}:=\pa(\mathcal{L})$ with the induction scheme
limited to formulas of $ \Gamma $. The \textit{$\Gamma$-Collection} scheme, denoted by $
\mathrm{B}\Gamma$, consists of the 
formulas of the following form for every $\varphi \in \Gamma$:
\begin{center}
$\forall \bar{z},u \ ((\forall x<u \ \exists y \ \varphi(x,y,\bar{z}))\rightarrow \exists v \ (\forall x<u \ \exists y<v \ \varphi(x,y,\bar{z})) ).$
\end{center}
Moreover, the \textit{strong $\Gamma$-Collection} scheme, denoted by $
\mathrm{B}^{+}\Gamma$, consists of the 
formulas of the following form for every $\varphi \in \Gamma$:
\begin{center}
	$\forall \bar{z},u \ \exists v \ \forall x<u \ (\exists y \ \varphi(x,y,\bar{z})\rightarrow \exists y<v  \ \varphi(x,y,\bar{z})).$
\end{center}
It is folklore that   $ \I\Sigma_{n+1}\vdash\mathrm{B}^{+}\Sigma_{n+1}\vdash\mathrm{B}\Sigma_{n+1} $ for all $ n\in\omega $; moreover, for every $ n\in\omega $, neither $ \I\Sigma_{n}\nvdash\mathrm{B}\Sigma_{n+1} $, nor $ \I\Sigma_{n}\nvdash\neg\mathrm{B}\Sigma_{n+1} $ (see \cite[Ch. I]{hp}).
\item Within $ \I\Delta_{0}+\mathrm{Exp}$, the $ \Delta_{0}$-formula $ x\E y $ denotes the \textit{Ackermann's membership relation}, asserting that \textit{"the $ x$-th bit of the binary expansion of $ y $ is 1"}. For every $ {\M\models\I\Delta_{0}+\mathrm{Exp}} $ and each $ a\in\mm $, $ a_{\E} $ denotes the set of $ \E$-members of $ a $ in $ \M $. Moreover, the  $ \Delta_{0}$-formulas  $ \mathrm{Card}(x)=y $, $ \langle\bar{x}\rangle=y $, $ \mathrm{Len}(x)=y $,  $ (x)_{y}=z $, and $ x\upharpoonright_{y}=z $ respectively express that \textit{"there exists some bijection between $  y $ and the set coded by $ x $"}, \textit{"the sequence number of $ \bar{x} $ is $ y $"},  \textit{"length of the sequence coded by $ x $ is $ y $"}, \textit{"the $ y$-th element of the sequence number $ x $ is $ z $"}, and \textit{"the restriction of the sequence number $ x $ to $ y $ is $ z $"}.  In addition, for every formula $ \varphi(x) $, by the formula $ y=\mu_{x}\ \varphi(x) $ we mean \textit{"y is the least element such that $ \varphi(y) $ holds"}.

Furthermore, for every $ n\in\omega $ there exist $ \mathcal{L}_{ A}$-formulas $ \sat_{\Sigma_{n}} $ and $ \sat_{\Pi_{n}} $ which define the satisfaction predicate for $\Sigma_{n}$-formulas and $ \Pi_{n}$-formulas respectively, in an ambient model. For every natural number $ n>0 $, it can be shown that $ \sat_{\Sigma_{n}} $ and $ \sat_{\Pi_{n}} $ are $ \Sigma_{n} $ and $ \Pi_{n} $ respectively in $ \I\Sigma_{1} $. Moreover, $ \sat_{\Delta_{0}}\in\Delta_{1}^{\I\Sigma_{1}} $ \cite[ch. I, Thm. 1.75]{hp}. If $ \M $ is a nonstandard model of $ \I\Sigma_{n} $, the aforementioned feature along  with $ \Sigma_{n}$-Overspill in $ \M $ imply that every coded $ \Sigma_{n}$-type and every coded bounded $ \Pi_{n}$-type is realized in $ \M $.
\item \textbf{$ \Sigma_{n}$-Pigeonhole Principle.} For every $ n>0 $, if $ \M\models\I\Sigma_{n} $, $ a\in\mm $, and $ \varphi $ is a $ \Sigma_{1}$-formula which defines a function from $ a+1 $ into $ a $ in $ \M $, then $ \varphi $ is not one-to-one \cite{hp}.
\item Given $ \Ll_{\A}$-structure $ \M $ and subset $ \X $ of $ \mm $, for every  $ n>0 $, we define:
\begin{center}
\begin{itemize}
\item $ \mathrm{K}^{n}(\M;\X):=$the set of all $ \Sigma_{n}$-definable element of $ \M $ with parameters from $ \X $;
\item $ \mathrm{I}^{n}(\M;\X):=\{x: \ x\leq a \text{ for some } a\in\mathrm{K}^{n}(\M;\X)\} $;
\item $ \mathrm{H}^{n}(\M;\X):=\bigcup_{k\in\omega}\mathrm{H}_{k}^{n}(\M;\X) $, where:
 \begin{center}$ \mathrm{H}_{0}^{n}(\M;\X):=\mathrm{I}^{n}(\M;\X),$ and \\ $ \mathrm{H}_{k+1}^{n}(\M;\X):=\mathrm{I}^{n}(\M;\mathrm{H}_{k}^{n}(\M;\X))$.
 	\end{center}
\item $ {\mathrm{K}(\M;\X):=\cup_{n\in \omega}\mathrm{K}^{n}(\M;\X)  }$.
\end{itemize}
\end{center}
(When $ X=\emptyset $, we omit $ X $ from the notations.) Clearly, $\mathrm{I}^{n}(\M;\X)  $ and $ \mathrm{H}^{n}(\M;\X) $ are initial segments of $ \M $. The following properties of these submodels of $ \M $ are well-known (e.g. see  \cite[Ch. IV, Thm. 1.33]{hp}):
\begin{theorem}
Suppose $ n>0 $, and $ \M\models\I\Sigma_{n} $ and $ X\subseteq M $, then the following hold:
\begin{itemize}
\item[(1)] $\mathrm{K}^{n}(\M;\X)\prec_{\Sigma_{n}}\M  $, and if $ \mathrm{K}^{n}(\M) $ is nonstandard, then $\mathrm{K}^{n}(\M)\models\neg\mathrm{B}\Sigma_{n}  $. 
\item[(2)] $\mathrm{I}^{n}(\M;\X)\prec_{\Sigma_{n-1}}\M  $, and  if $ \mathrm{K}^{n}(\M;\X) $ is nonstandard, then $\mathrm{I}^{n}(\M;\X)\models\mathrm{B}\Sigma_{n}  $. 
\item[(3)] $\mathrm{H}^{n}(\M;\X)\prec_{\Sigma_{n}}\M  $  , and if $ \mathrm{K}^{n}(\M;\X) $ is nonstandard, then $\mathrm{H}^{n}(\M;\X)\models\mathrm{B}\Sigma_{n+1}  $. 
\end{itemize}
\end{theorem}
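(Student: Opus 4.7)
My plan is to verify each clause uniformly by a Tarski--Vaught style least-witness argument, exploiting the fact that in $ \I\Sigma_{1} $ the predicate $ \sat_{\Sigma_{n}} $ is itself $ \Sigma_{n} $, so that the $ \mu $-operator turns any $ \Sigma_{n} $-witness into a $ \Sigma_{n} $-definable element. For clause (1), the $ \Sigma_{n} $-elementarity of $ \mathrm{K}^{n}(\M;\X) $ is immediate: given $ \bar{a}\in\mathrm{K}^{n}(\M;\X) $ and $ \M\models\exists x\,\varphi(x,\bar{a}) $ with $ \varphi\in\Sigma_{n} $, the least witness $ \mu_{x}\varphi(x,\bar{a}) $ is $ \Sigma_{n} $-definable from $ \bar{a} $ and already lies in $ \mathrm{K}^{n}(\M;\X) $. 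For $ \mathrm{K}^{n}(\M)\models\neg\mathrm{B}\Sigma_{n} $ when $ \mathrm{K}^{n}(\M) $ is nonstandard, I would exhibit, via $ \sat_{\Sigma_{n}} $ and $ \Sigma_{n} $-Overspill, a $ \Sigma_{n} $-definable function on a bounded interval of formula codes whose image is cofinal in $ \mathrm{K}^{n}(\M) $; this directly contradicts $ \mathrm{B}\Sigma_{n} $.

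Clause (2) follows by the same construction one quantifier-level lower. The least $ \Sigma_{n-1} $-witness of a formula with parameters in $ \mathrm{I}^{n}(\M;\X) $ is $ \Sigma_{n} $-definable, hence belongs to $ \mathrm{K}^{n}(\M;\X)\subseteq\mathrm{I}^{n}(\M;\X) $, giving $ \Sigma_{n-1} $-elementarity. For $ \mathrm{I}^{n}(\M;\X)\models\mathrm{B}\Sigma_{n} $, I would fix $ u\in\mathrm{I}^{n}(\M;\X) $ with $ u\leq c\in\mathrm{K}^{n}(\M;\X) $ and apply $ \M\models\mathrm{B}^{+}\Sigma_{n} $: the least $ v $ serving as a uniform witness-bound for the collection conclusion at $ c $ is $ \Sigma_{n} $-definable from $ c $ and the other parameters, so it belongs to $ \mathrm{K}^{n}(\M;\X) $ and a fortiori bounds the witnesses below $ u $ inside $ \mathrm{I}^{n}(\M;\X) $.

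Clause (3) is where I expect the main obstacle. The $ \Sigma_{n} $-elementarity of $ \mathrm{H}^{n}(\M;\X) $ can be proved inductively along the chain $ \mathrm{H}^{n}_{k}\subseteq\mathrm{H}^{n}_{k+1} $: a least $ \Sigma_{n} $-witness with parameters in $ \mathrm{H}^{n}_{k} $ lies in $ \mathrm{K}^{n}(\M;\mathrm{H}^{n}_{k})\subseteq\mathrm{H}^{n}_{k+1} $, and $ \Sigma_{n} $-elementarity passes to the union. The verification of $ \mathrm{B}\Sigma_{n+1} $ inside $ \mathrm{H}^{n}(\M;\X) $ cannot simply be pulled back to $ \M $, which need not satisfy $ \mathrm{B}\Sigma_{n+1} $, and this is the step I foresee as the main difficulty. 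My strategy is to combine the established $ \Sigma_{n} $-elementarity of $ \mathrm{H}^{n}(\M;\X) $ with strong $ \Sigma_{n} $-collection in $ \M $ applied to the $ \Pi_{n} $-matrix of the given $ \Sigma_{n+1} $-instance, so that the sequence of least witnesses on $ [0,u) $ gets coded into a single element; after one further step up the $ \mathrm{H}^{n}_{k} $ hierarchy this code, together with the bound it determines, lands inside $ \mathrm{H}^{n}(\M;\X) $.
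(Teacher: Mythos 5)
The paper does not prove Theorem 2.1; it states it as a well-known fact and cites H\'ajek--Pudl\'ak, so I am assessing your argument on its own terms.

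Your plan for clause (1) is fine, and clause (2) is close but has a slip that you already repair elsewhere in the same paragraph without noticing you need it: the least $\Sigma_{n-1}$-witness for a formula with parameters $\bar{a}\in \mathrm{I}^{n}(\M;\X)$ is $\Sigma_{n}$-definable \emph{from $\bar{a}$}, and $\bar{a}$ need not lie in $\mathrm{K}^{n}(\M;\X)$, so you cannot conclude directly that the witness is there. The fix is exactly the trick you propose for $\mathrm{B}\Sigma_{n}$: bound $\bar{a}$ by some $c\in\mathrm{K}^{n}(\M;\X)$, quantify the parameters out below $c$, and use $\mathrm{B}^{+}\Sigma_{n-1}$ (available in $\I\Sigma_{n}$) to get a uniform witness bound $v$ that is $\Sigma_{n}$-definable from $c$ alone, hence in $\mathrm{K}^{n}(\M;\X)$; the actual witness then lies below $v$, hence in $\mathrm{I}^{n}(\M;\X)$. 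You should apply this uniformly in both halves of (2).

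The genuine gap is in clause (3). You propose to obtain $\mathrm{B}\Sigma_{n+1}$ in $\mathrm{H}^{n}(\M;\X)$ by applying ``strong $\Sigma_{n}$-collection in $\M$ to the $\Pi_{n}$-matrix'' and coding the resulting sequence of least witnesses. This fails twice over. First, $\mathrm{B}^{+}\Sigma_{n}$ is a scheme for $\Sigma_{n}$ formulas; what you want is $\mathrm{B}^{+}\Pi_{n}$, and over $\I\Delta_{0}$ already $\mathrm{B}\Pi_{n}$ is equivalent to $\mathrm{B}\Sigma_{n+1}$, which $\I\Sigma_{n}$ does \emph{not} prove, so $\M$ need not satisfy the collection instance you are invoking. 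Second, even if $\M$ happened to contain a code $w$ for the sequence $\langle \mu_{y}\varphi(x,y):x<u\rangle$, that $w$ is only $\Delta_{n+1}$-definable from $u$; it is not $\Sigma_{n}$-definable, so there is no reason for it to land in $\mathrm{K}^{n}(\M;\mathrm{H}_{k}^{n}(\M;\X))$. Going ``one further step up the $\mathrm{H}_{k}^{n}$ hierarchy'' only adjoins $\Sigma_{n}$-definable elements and cannot capture $w$. The correct argument does not push a coded sequence into $\mathrm{H}^{n}(\M;\X)$ at all; it uses Overspill/Underspill and the fact that $\mathrm{H}^{n}(\M;\X)$ is a proper $\Sigma_{n}$-elementary cut. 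Concretely: each least witness $\mu_{y}\varphi(x,y)$ already lies in $\mathrm{H}^{n}(\M;\X)$, so the set $S:=\{v:\M\models\forall x<u\,\exists y<v\,\varphi(x,y,\bar{a})\}$, which is $\Pi_{n}$-definable over $\I\Sigma_{n}$, contains every $v$ above the cut. If $S$ missed the whole cut, the cut would be $\Sigma_{n}$-definable, contradicting $\I\Sigma_{n}$; hence some $v_{0}\in\mathrm{H}^{n}(\M;\X)\cap S$, and $\Sigma_{n}$-elementarity transfers $\forall x<u\,\exists y<v_{0}\,\varphi(x,y,\bar{a})$ down into $\mathrm{H}^{n}(\M;\X)$. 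You should replace your sketch for (3) with this argument.
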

\item A given structure $ \M $ is called  \textit{recursively saturated} if it realizes every recursive type with finite parameters in $ M $. In \cite{bs}, Barwise and Shilipf showed that  \textit{any countable  model $ \M $ of $ \pa $ is recursively saturated iff it carries an \textit{inductive satisfaction class}}; here  an \textit{inductive satisfaction class} $  S $ of $ \M $ is a subset of $ \mm $ which contains $ \langle\varphi,a\rangle $ such that (1) $ \M\models \mathrm{Form}(\varphi) $, (2) $ (\M; S)\models\pa^{*} $, and (3) $ (\M; S) $ satisfies Tarski's inductive conditions for satisfaction (for a more precise definition see \cite{kaye}). It is folklore that for every countable recursively saturated model $ \M $ of $ \pa $ there exists some inductive satisfaction class $  S $ such that $ (\M; S) $ is also recursively saturated (e.g. see \cite{kos3}).
\item For every cut $ I $ of $ \M $ the \textit{$ I$-Standard System} of $ \M $, denoted by $ \ssy_{I}(\M) $, is the family of subsets of $ I $ of the form $ I\cap a_{\E} $ for some $ a\in\mm $. By $ \ssy(\M) $ we mean $ \ssy_{\mathbb{N}}(\M) $. It is well-known that for every model $ \M $ of $ \I\Sigma_{n} $ (for  $ n>0 $), $ \ssy_{I}(\M) $ is equal to the family of subsets of $ I $ which are $ \Sigma_{n}$-definable (with parameters) in $ \M $ (see \cite[Ch. I]{hp}). Moreover, it is easy to check that if $ \N$ is an initial segment and a submodel of $ \M $ containing $ I $, then $\ssy_{I}(\M)=\ssy_{I}(\N)  $ (see \cite{kaye}).
\item A given model $ \M $ of $ \I\Delta_{0} $ is called \textit{1-tall} if $ \mathrm{K}^{1}(\M;a) $ is  cofinal in $ \M $ for no $ a\in\mm $; and it is called \textit{1-extendable} if it possesses some end extension $ \N\models\I\Delta_{0} $ such that $ \Th_{\Sigma_{1}}(\M)=\Th_{\Sigma_{1}}(\N) $. Dimitracopoulos and Paris, in \cite{dp} showed that:
\begin{theorem}[Dimitracopoulos-Paris]
\begin{itemize}
\item[(1)] For any two countable and nonstandard models $ \M $ and $ \N $ of $ \I\Delta_{0}+\mathrm{Exp} $ such that $ \M $ is 1-extendable and $ \N $  is 1-tall, there exists a proper initial embedding from $ \M $ into $ \N $ iff $ \ssy(\M)=\ssy(\N) $ and $ \Th_{\Sigma_{1}}(\M)\subseteq\Th_{\Sigma_{1}}(\N) $.
\item[(2)] Any 1-tall countable model $ \M $ of $ \mathrm{B}\Sigma_{1}+\mathrm{Exp} $ in which $ \mathbb{N} $ is not $ \Pi_{1} $-definable (without parameters), is 1-extendable.
\end{itemize}
\end{theorem}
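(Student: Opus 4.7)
The plan is to handle the two parts separately. For the easy direction of part (1), suppose $j \colon \M \to \N$ is a proper initial embedding. Since $\Sigma_1$-formulas are upward absolute from initial segments under $\I\Delta_0 + \mathrm{Exp}$, any $\Sigma_1$-sentence true in $\M$ transfers to $\N$, giving $\Th_{\Sigma_1}(\M) \subseteq \Th_{\Sigma_1}(\N)$. The equality $\ssy(\M) = \ssy(\N)$ follows because the Ackermann relation is $\Delta_0$-absolute, so $a_\E \cap \mathbb{N} = j(a)_\E \cap \mathbb{N}$ for each $a \in M$ (one inclusion); for the reverse inclusion, given any $b \in N$ pick a nonstandard $a \in \M$, note that the bits of $b$ below $j(a)$ are coded by an element of $j(\M)$ via $\mathrm{Exp}$ and initiality, and the $j$-preimage of that code realizes the same $\mathbb{N}$-trace as $b$.

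For the nontrivial direction of part (1), my plan is a back-and-forth construction producing $j$ as the union of a chain of finite partial embeddings. Fix enumerations $M = \{a_i : i \in \omega\}$ and $N = \{d_i : i \in \omega\}$. At each stage $s$ I will maintain finite tuples $\bar a_s$ from $M$ and $\bar b_s$ from $N$ with matching $\Sigma_1$-types, together with a parameter $c_s \in N$ that bounds every entry of $\bar b_s$ and is required to bound every future image element; this shrinking target region is the device that forces the final map to be initial. Forth steps adjoin a new $a_i$ to the domain and back steps ensure that every $d_i$ eventually below the cut receives a preimage. The main obstacle is performing each step while respecting both the $\Sigma_1$-type invariant and the bound $c_s$, and this is precisely where the three hypotheses combine: $\Th_{\Sigma_1}(\M) \subseteq \Th_{\Sigma_1}(\N)$ supplies matching $\N$-witnesses for $\Sigma_1$-formulas true in $\M$; the 1-tallness of $\N$ provides fresh $c_{s+1}$ strictly above the $\Sigma_1$-Skolem closure of the current tuple; and the 1-extendability of $\M$, combined with $\ssy(\M) = \ssy(\N)$, lets an $\N$-coded $\Sigma_1$-type be realized first in the end extension $\M' \supseteq \M$ with $\Th_{\Sigma_1}(\M') = \Th_{\Sigma_1}(\M)$ and then pulled back into $\M$ by a $\Sigma_1$-overspill argument.

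For part (2), I would construct the required end extension by an arithmetized completeness argument formalized inside $\M$. Let $T$ denote the theory comprising $\I\Delta_0 + \mathrm{Exp}$, the atomic and negated-atomic diagram of $\M$, and axioms $c > \underline a$ for a fresh constant $c$ and every $a \in M$. The pivotal claim is the $\M$-formalized consistency of $T$: an $\M$-finite inconsistency would yield a $\Pi_1$-definition of the standard cut --- essentially, the set of $n$ for which every fragment of $T$ using parameters bounded by $n$ is consistent --- contradicting the hypothesis that $\mathbb{N}$ is not $\Pi_1$-definable in $\M$. Once consistency is secured, a Henkin construction carried out inside $\M$ under $\mathrm{B}\Sigma_1 + \mathrm{Exp}$ yields an $\N \models \I\Delta_0$ that end-extends $\M$, and the $\Sigma_1$-theory is preserved because $\mathrm{B}\Sigma_1$ forces any new $\Sigma_1$-witness in $\N$ to reflect to a bounded witness already available in $\M$. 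The step I expect to be the main technical obstacle is this consistency argument, since one must extract a $\Pi_1$-definition of $\mathbb{N}$ from an internal proof of inconsistency while operating within the modest resources of $\mathrm{B}\Sigma_1 + \mathrm{Exp}$.
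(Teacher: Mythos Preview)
The paper does not prove this statement: it appears in the preliminaries as a cited result of Dimitracopoulos and Paris, with no argument supplied. So there is no in-paper proof to compare your proposal against.

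On the proposal itself, the architecture for part~(1) --- a back-and-forth maintaining a $\Sigma_1$-type invariant together with an $\N$-bound forcing initiality --- is the right strategy and is essentially what the original does. (Your description of the bound is slightly muddled: you call it a ``shrinking target region'' but then say 1-tallness supplies $c_{s+1}$ ``strictly above'' the current $\Sigma_1$-Skolem closure; in the actual construction one fixes a single bound at the outset and the roles of 1-tallness and 1-extendability in the forth and back steps are not quite as you allocate them. But the overall shape is correct.)

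For part~(2) there is a genuine gap. Your theory $T$ comprises $\I\Delta_0 + \mathrm{Exp}$, the diagram of $\M$, and the axioms $c > \underline a$; this does not secure $\Th_{\Sigma_1}(\N) \subseteq \Th_{\Sigma_1}(\M)$. End-extension gives the forward inclusion automatically, but nothing in $T$ prevents the Henkin model $\N$ from satisfying a new $\Sigma_1$-sentence $\exists x\,\delta(x)$ that fails in $\M$. Your justification --- that $\mathrm{B}\Sigma_1$ in $\M$ reflects new $\Sigma_1$-witnesses from $\N$ down into $\M$ --- is not correct: $\mathrm{B}\Sigma_1$ is a collection principle internal to $\M$ and says nothing about elements of $\N\setminus M$. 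The standard repair is to add to $T$ every $\Pi_1$-sentence true in $\M$; the consistency argument must then handle this enlarged theory, and it is here that both the $\Pi_1$-undefinability of $\mathbb{N}$ and the 1-tallness hypothesis (which your sketch for part~(2) never invokes) actually enter.
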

\item A given  cut $ I $ of a  model $ \M $ is called \textit{strong} if for every coded function $ f $ of $ \M $ whose domain contains $ I $, there exists some $ e>I $ such that $ f(i)\in I $ iff $ f(i)<e $ for all $ i\in I $. Paris and Kirby, in  \cite{kp}, proved that \textit{ $ I $ is a strong cut of a model $ \M $ of $ \I\Delta_{0}+\mathrm{Exp} $ iff  $ (I,\ssy_{I}(\M))\models\mathrm{ACA}_{0} $} (here $ \mathrm{ACA}_{0} $ is the subsystem of second order arithmetic with the comprehension scheme  restricted to formulas with no second order quantifier).   
\item For given $ \Ll_{\A}$-structures $ \M $ and $ \N $,  an  \textit{(a proper) initial embedding} $ j $ is an embedding from $ \M $ into $ \N $ whose image is an (a proper) initial segment of $ \N $. To every self-embedding $ j $ of $ \M $, we associate two subsets of $ \mm $:
\begin{center}
	$\mathrm{I}_{\mathrm{fix}}(j):=\{m\in M:\forall x\leq m\ j(x)=x\},$ and\\[0pt
	]
	$\mathrm{Fix}(j):=\{m\in M:j(m)=m\}.$
\end{center}
In \cite{our}, it is shown that for every model $ \M $ of $ \I\Sigma_{1} $, and any self-embedding $ j $ of $ \M $, it holds that $K^{1}(\mathcal{M})\prec _{\Sigma _{1}}
\mathrm{Fix}(j)\prec_{\Sigma _{1}}\mathcal{M}$. Consequently,  $ \mathrm{Fix}(j)\models \I\Delta_{0}+\mathrm{Exp}. $ The following results on the set of fixed points of initial self-embeddings were also proved in \cite{our}:
\begin{theorem}[B-Enayat]
Let $ \M $ and $ \N $ be countable nonstandard models of $ \I\Sigma_{1} $, $ c\in\mm $ and $d,b\in\nn  $, and $ I $ be a proper cut shared by $ \M $ and $ \N $ which is closed under exponentiation. Then the following are equivalent:
\begin{itemize}
\item[(1)] There exists some proper  initial embedding $ j $ from $ \M $ into $ \N $ such that $ I\subseteq\I_{\mathrm{fix}}(j) $, $ j(\mm)<b $, and $ j(c)=d $.
\item[(2)] $ \ssy_{I}(\M)=\ssy_{I}(\N) $, and for every $ \Delta_{0}$-formula $ \delta(z,x,y) $ and every $ i\in I $ it holds that:
$$ \M\models\exists z \ \delta(z,c,i) \ \Rightarrow \ \N\models\exists z<b \ \delta(z,d,i) . $$
\end{itemize} 
\end{theorem}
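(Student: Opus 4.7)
For $(1)\Rightarrow(2)$: since $j$ preserves $\Delta_0$-formulas and $j(\mm)<b$, any $\M$-witness $z$ of $\exists z\,\delta(z,c,i)$ yields the $\N$-witness $j(z)<b$ of $\delta(z,d,i)$, using $j(c)=d$ and $j(i)=i$ for $i\in I$. For the equality of standard systems, the image $j(\mm)$ is simultaneously a submodel and an initial segment of $\N$ that contains $I$, so the fact recorded in Section~2 gives $\ssy_I(\N)=\ssy_I(j(\mm))$, while the isomorphism $j\colon\M\to j(\mm)$ pointwise fixing $I$ gives $\ssy_I(j(\mm))=\ssy_I(\M)$.

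For $(2)\Rightarrow(1)$, the plan is a back-and-forth construction. Enumerate $\mm=\{a_k:k<\omega\}$ with $a_0=c$, and the elements of $\nn$ below $b$ as $\{e_k:k<\omega\}$. I would build a chain of finite partial maps $j_0\subseteq j_1\subseteq\cdots$, each sending a tuple $(c,u_1,\dots,u_{s_k})\in\mm$ to $(d,v_1,\dots,v_{s_k})\in\nn$ with $v_i<b$, packaged (using the closure of $I$ under exponentiation) into single sequence codes $\tilde c_k,\tilde d_k$. The invariant to maintain is that the hypothesis of clause~(2) persists at $\tilde c_k,\tilde d_k$: for every $\Delta_0$-formula $\delta$ and every $i\in I$,
\[
\M\models\exists z\,\delta(z,\tilde c_k,i)\ \Longrightarrow\ \N\models\exists z<b\,\delta(z,\tilde d_k,i).
\]
This invariant already forces $\Delta_0$-elementarity on the packed tuples, since any $\Delta_0$-atom $\delta(\tilde c_k,i)$ is equivalent to the trivially bounded $\Sigma_1$-statement $\exists z\,(z{=}z\wedge\delta(\tilde c_k,i))$, and applying the hypothesis to both $\delta$ and $\neg\delta$ yields both directions. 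Odd stages extend the domain to include $a_k$; even stages, whenever $e_k$ lies below the current image, insert its preimage so that $\bigcup_k j_k$ is forced to be initial. The union $j$ is then the desired proper initial embedding, with $j(c)=d$, $j(\mm)<b$, and $I\subseteq\I_{\mathrm{fix}}(j)$.

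The heart of the argument is the extension lemma: given $j_k$ satisfying the invariant and $a\in\mm$ to be added to the domain, find $b^*<b$ such that the invariant persists for the packed pair $(\langle\tilde c_k,a\rangle,\langle\tilde d_k,b^*\rangle)$. Consider the $\Sigma_1$-type
\[
p(z)=\{\exists w\,\delta(w,\tilde d_k,z,i):\delta\in\Delta_0,\ i\in I,\ \M\models\exists w\,\delta(w,\tilde c_k,a,i)\}.
\]
Using the $\Delta_1$-definability of $\sat_{\Delta_0}$ inside $\I\Sigma_1$, the set of pairs $(\lceil\delta\rceil,i)$ indexing $p$ is $\Sigma_1$-definable in $\M$, so its intersection with $I$ lies in $\ssy_I(\M)=\ssy_I(\N)$ and has a code $e\in\nn$. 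For each $i\in I$, the invariant applied to the $\Sigma_1$-statement ``there exists $z$ satisfying every $\Delta_0$-requirement coded by $e$ at level $\leq i$'' provides a witness $z<b$ in $\N$; $\Sigma_1$-overspill in $\N$ (valid because $I$ is a proper cut and $\N\models\I\Sigma_1$) then amalgamates these into a single $b^*<b$ realizing $p$. The dual back step, adding a preimage for $e_k$, is handled symmetrically by interchanging the roles of $\M$ and $\N$ in the type construction while retaining $b$ as the $\N$-side bound. The main obstacle is precisely this extension lemma: the hypothesis of~(2) is stated only for the pair $(c,d)$, so its persistence at the growing packed pairs $(\tilde c_k,\tilde d_k)$ demands a coherent coordination of $I$'s exponential closure (for sequence packing), of $\ssy_I(\M)=\ssy_I(\N)$ (for transport of type-codes from $\M$ to $\N$), and of $\Sigma_1$-overspill (for amalgamating the individual realizations into a single witness below $b$).
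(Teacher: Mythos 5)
Your argument for $(1)\Rightarrow(2)$ is correct, and for $(2)\Rightarrow(1)$ the overall strategy---a back-and-forth preserving the one-way $\Sigma_1$-transfer condition of (2) on the growing tuples, with coded types transported through $\ssy_I(\M)=\ssy_I(\N)$ and amalgamated by $\Sigma_1$-overspill---is the right one and matches the approach used in the source paper and in the analogous constructions in Sections~3 and~5 here. Your forth step is essentially right, modulo the cosmetic point that the formulas of $p(z)$ must carry the bound $\exists w<b$ on the $\N$-side witnesses for the invariant to actually persist, which you acknowledge only in passing.

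The genuine gap is the back step. The invariant is an intrinsically asymmetric, one-way implication (unbounded witnesses in $\M$, witnesses bounded by $b$ in $\N$), and ``interchanging the roles of $\M$ and $\N$'' does not produce the type you need. A literal interchange yields a $\Sigma_1$-type over $\M$ asking, for each $\delta,i$ with $\N\models\exists w<b\,\delta(w,\tilde d_k,e_k,i)$, that $\M\models\exists w\,\delta(w,\tilde c_k,a^*,i)$; realizing this gives the implication in the direction $\N\Rightarrow\M$, which is neither what the invariant says nor what must be preserved when extending by $a^*\mapsto e_k$. What is actually required is the contrapositive: a bounded $\Pi_1$-type on $a^*$ demanding $\M\models\forall w\,\neg\delta(w,\tilde c_k,y,i)$ for every $\delta,i$ with $\N\models\forall w<b\,\neg\delta(w,\tilde d_k,e_k,i)$, together with the bound $y<\max(\tilde c_k)$ (needed both so that the $\Pi_1$-type is realizable in a model of $\I\Sigma_1$ and so that the image ends up an initial segment). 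Finite satisfiability of this $\Pi_1$-type is exactly where the current invariant plus $\Sigma_1$-collection in $\M$ are used, and the index set is a bounded $\Pi_1$-subset of $I$ in $\N$ that is transported to $\M$ via $\ssy_I$. This $\Sigma_1$-versus-$\Pi_1$ (positive versus negated) split between the two stages is visible in the paper's own back-and-forth arguments: compare the type $q_{s1}$ in the ``back'' stage of the proof of Theorem~3.4, which is written precisely in the negated, contrapositive form.
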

\begin{rem} With the above assumptions, suppose    $ a\in\mm\cap\nn $ such that
	 for all $ \Delta_{0}$-formula $ \delta $ and for every $ i\in I $ it holds that: 
	\begin{center}
		$ \M\models\exists z \ \delta(z,c,(a)_{i}) \ \Rightarrow \ \N\models\exists z<b \ \delta(z,d,(a)_{i}) $. 
	\end{center}
	Then, by an appropriate modification in the proof of Theorem 2.3,   we can manage to construct the above proper initial embedding  $ j $  with the additinal feature that  $ {j((a)_{i})=(a)_{i}} $ for every $ i\in I $. 
\end{rem}
\begin{theorem}[B-Enayat]
Suppose $ \M\models\I\Sigma_{1} $ is countable and nonstandard and $ I $ is a cut of $ \M $. Then the following hold:
\begin{itemize}
\item[(1)] $ I $ is closed under exponentiation iff there exists some proper initial self-embedding $ j $ of $ \M $ such that $ \I_{\mathrm{fix}}(j)=I $.
\item[(2)] $ I $ is strong in $ \M $ and $ I\prec_{\Sigma_{1}}\M $, iff there exists some proper initial self-embedding $ j $ of $ \M $ such that $ \fix(j)=I $.
\item[(3)]  $ \mathbb{N} $ is strong in $ \M $  iff there exists some proper initial self-embedding $ j $ of $ \M $ such that $ \fix(j)=\mathrm{K}^{1}(\M) $.
\end{itemize}
\end{theorem}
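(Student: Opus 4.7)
My approach is to prove all three parts through a common template: the "necessity" direction in each uses the fact cited in the excerpt that $\mathrm{K}^{1}(\M)\prec_{\Sigma_{1}}\fix(j)\prec_{\Sigma_{1}}\M$ for any self-embedding $j$, combined with $\Sigma_{1}$-overspill to extract the desired closure property; the "existence" direction is a back-and-forth construction in which Theorem 2.3 and its Remark 1 extend a finite partial approximation to $j$ at each stage. Closure of $\I_{\mathrm{fix}}(j)$ under exponentiation and $\Sigma_{1}$-elementarity of $\fix(j)$ drop out of $\Delta_{0}$-preservation; the strong-cut hypothesis enters only when we must guarantee at each stage of the back-and-forth that a partner for an element outside $I$ (or $\mathrm{K}^{1}(\M)$) can be found with matching $\Sigma_{1}$-type and within a bounded region of $\M$.

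For part (1), if $x\in\I_{\mathrm{fix}}(j)$ then $j$ fixes $[0,x]$ pointwise, and every $y\leq 2^{x}$ is determined in $\M$ by its $\E$-members, all of which lie in $[0,x]$; hence $\Delta_{0}$-preservation gives $j(y)=y$, so $2^{x}\in\I_{\mathrm{fix}}(j)$. For the converse, enumerate $\M\setminus I$ as $(c_{n})_{n<\omega}$ and pick a cofinal sequence $(b_{n})_{n<\omega}$ of $\M$; at stage $n$ invoke Theorem 2.3 with $\N=\M$, shared cut $I$, parameters $c=c_{n}$, a choice $d_{n}\neq c_{n}$, and bound $b_{n}$, using that closure of $I$ under exp yields the trivial $\ssy_{I}$-identity and the required $\Sigma_{1}$-compatibility. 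Remark 1 preserves commitments already made in earlier stages. Part (2) follows the same pattern: necessity of $I\prec_{\Sigma_{1}}\M$ is immediate; for strongness, given $f$ coded by $a$ with $\mathrm{dom}(f)\supseteq I$, the $\Sigma_{1}$-formula $(a)_{i}\neq (j(a))_{i}$ separates those $i\in I$ with $f(i)\in I$ from those with $f(i)\notin I$, because $\fix(j)=I$ forces $j(f(i))\neq f(i)$ whenever $f(i)\notin I$, and then $\Sigma_{1}$-overspill in $\M$ supplies the required witness $e>I$. Sufficiency in (2) refines the back-and-forth of (1) by demanding $j(c_{n})\neq c_{n}$ at each stage rather than merely $c_{n}\notin\I_{\mathrm{fix}}(j)$; strongness of $I$ is exactly what permits the bounded search for a partner $d_{n}$ realizing the same $\Sigma_{1}$-type over $I$ as $c_{n}$.

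Part (3) is a specialization of (2): run the back-and-forth taking $I=\mathrm{K}^{1}(\M)$, noting $\mathrm{K}^{1}(\M)\prec_{\Sigma_{1}}\M$ from Theorem 2.1 and that strongness of $\mathbb{N}$ in $\M$ yields, via $\Sigma_{1}$-definability of elements of $\mathrm{K}^{1}(\M)$ from standard parameters, the analogue of the strong-cut property needed to run the construction just above $\mathbb{N}$. Necessity reduces again to $\Sigma_{1}$-overspill on standard data. I expect the main obstacle to be the $(\Leftarrow)$ direction of (2): simultaneously fixing $I$ pointwise, moving every element above $I$, and maintaining the technical $\Sigma_{1}$-compatibility clause of Theorem 2.3 across all stages requires careful bookkeeping, and it is precisely the strong-cut hypothesis on $I$ that keeps these three requirements mutually consistent throughout the construction.
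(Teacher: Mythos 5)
Note first that the paper you are reading does not actually prove Theorem~2.4: it is quoted as a preliminary from the earlier Bahrami--Enayat paper \cite{our}, so there is no in-paper proof to compare against. The techniques one would use are, however, visible in this paper's Theorem~3.4 and Corollary~3.5, which \emph{generalize} part~(2); there the existence direction is a back-and-forth over finite partial maps $\bar{u}\mapsto\bar{v}$ carrying two inductive invariants, $\mathrm{P}$ (type compatibility, allowing Theorem~2.3 to be applied at the end) and $\mathrm{Q}$ (a promise that certain values will stay non-fixed), with strongness entering to show the relevant bounded $\Pi_{1}$-types $p_{s}(y)$ are finitely satisfiable. Your necessity directions check out: the Ackermann-membership argument for closure of $\I_{\mathrm{fix}}(j)$ under exponentiation is right, and for strongness the overspill on the $\Delta_{0}$-formula $\theta(c):=\forall i<c\,\bigl((a)_{i}\neq(j(a))_{i}\rightarrow(a)_{i}>c\bigr)$ (true on all of $I$ because for $i\in I$ one has $(a)_{i}\neq(j(a))_{i}$ iff $f(i)\notin I$) does produce the witness $e$.

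The genuine gap is in your existence directions and has two parts. First, Theorem~2.3 is an \emph{end product}, not a one-step extension lemma: each invocation yields a brand-new, complete initial embedding $j$ with $j(c)=d$ and $I\subseteq\I_{\mathrm{fix}}(j)$. Invoking it at stage $n$ with $(c_{n},d_{n},b_{n})$ and at stage $n+1$ with $(c_{n+1},d_{n+1},b_{n+1})$ produces two embeddings with no reason to agree; Remark~1 only lets one coded $I$-sequence be held fixed within a \emph{single} application and does not glue successive ones. A single application is also insufficient: from $j(c)\neq c$ for one $c>I$ you may only conclude $\I_{\mathrm{fix}}(j)\leq c$, and $j$ could perfectly well fix an interval $(I,c')$ with $I<c'<c$. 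Second, even if you could chain the steps, demanding $j(c_{n})\neq c_{n}$ for \emph{every} $c_{n}\in M\setminus I$ is the requirement for $\mathrm{Fix}(j)=I$ (part~(2)), and by part~(2) itself this already forces $I$ to be strong. So your plan for part~(1), as written, cannot succeed when $I$ is merely closed under exponentiation. What part~(1) actually needs is the weaker condition that for each element $e_{n}$ of a downward-cofinal sequence in $M\setminus I$, \emph{some} element $\leq e_{n}$ moves; and what part~(2) needs is a from-scratch back-and-forth carrying the $\mathrm{Q}$-type non-collision invariant, where strongness is consumed in verifying that the side conditions can always be met. Your sketch gestures at the right ingredients but does not set up the construction in a way that could be carried out.
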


\item The following lemma from \cite{our} will be useful in section 4 of this paper:
\begin{lem}
Suppose $ \M\models\I\Delta_{0}+\mathrm{Exp} $ in which $\mathbb{N}$ is not a strong cut, then for any self-embedding $j$ of $\mathcal{M}$, the following hold:
\begin{itemize}
\item[(1)] 	The nonstandard fixed points of $j$ are downward cofinal in the
nonstandard part of $\mathcal{M}$.
\item[(2)]  
 For every element $a\in \mm$, and $ m\in\fix(j) $ there exists an element $b\in \mathrm{Fix}
(j)$ such that: 
\begin{center}
$\mathrm{Th}_{\Sigma _{1}}(\mathcal{M};a,m)\subseteq \mathrm{Th}_{\Sigma _{1}}(\mathcal{M};b,m).$
\end{center}
\end{itemize}
 \end{lem}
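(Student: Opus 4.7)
The plan is to establish (1) directly from the witnessing function of non-strongness, and to deduce (2) by combining (1) with a $\Sigma_{1}$-overspill argument applied to the coded $\Sigma_{1}$-type of $a$ over $m$.

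\emph{Sketch of (1).} Fix a nonstandard $e\in\mm$; the goal is a nonstandard $b\in\fix(j)$ with $b<e$. Unravelling the failure of strongness of $\mathbb{N}$ in $\M$ furnishes a coded function $f$, coded by some $c\in\mm$, whose restriction to $\mathbb{N}$ attains nonstandard values cofinal downward in $\mm\setminus\mathbb{N}$. Since $j$ pointwise fixes $\mathbb{N}$, the element $c^{*}:=j(c)\in\mm$ codes the function $n\mapsto j(f(n))$ on $\mathbb{N}$. I would analyse the $\Delta_{0}$-definable (with parameters $c,c^{*}$) agreement set $A:=\{n:(c)_{n}=(c^{*})_{n}\}$ and the coded function $g(n):=\min((c)_{n},(c^{*})_{n})$, aiming to show that $A\cap\mathbb{N}$ contains some $n$ with $f(n)$ nonstandard and below $e$; for such $n$, $b:=f(n)$ is the desired fixed point. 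To secure this, I would iterate the construction $f\mapsto g\mapsto\min(g,j\circ g)\mapsto\cdots$, producing a descending chain of coded functions that each witness failure of strongness, and then argue that the chain either stabilises (producing a coded function on which $j$ is the identity at standard indices) or gives a contradiction by refining the non-strongness witness itself.

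\emph{Sketch of (2).} Given $a\in\mm$ and $m\in\fix(j)$, encode the type $\Th_{\Sigma_{1}}(\M;a,m)$ via the $\Sigma_{1}$-definable set $T:=\{i:\M\models\sat_{\Sigma_{1}}(\varphi_{i},\langle a,m\rangle)\}$. For every standard $k$, the $\Sigma_{1}$-sentence asserting the existence of $x$ together with a length-$(k+1)$ sequence $w$ satisfying $(w)_{i}=1\leftrightarrow i\in T$ and $(w)_{i}=1\rightarrow\sat_{\Sigma_{1}}(\varphi_{i},\langle x,m\rangle)$ for all $i\leq k$ is witnessed in $\M$ by $x=a$. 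Via $\mathrm{B}\Sigma_{1}$-collection together with $\Sigma_{1}$-overspill, this extends to some nonstandard $K$ with a joint witness $x$, and such an $x$ realises every standard-index formula of the type. Using part (1), the downward cofinality of nonstandard fixed points, combined with $\fix(j)\prec_{\Sigma_{1}}\M$, the witness $x$ may be chosen inside $\fix(j)$; this yields the desired $b$.

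\emph{Main obstacle.} The crux is part (1): showing that the agreement set of $f$ and $j\circ f$ inherits downward cofinality from $f$. The iterative refinement is delicate because each iterate involves $j^{k}(c)$, which is external to $\M$, and termination of the descent is not automatic. The resolution should hinge on converting the hypothetical failure of agreement below $e$ into an internally coded $\Delta_{0}$-function that itself refutes the very non-strongness witness used to build $f$. In part (2) the subsequent obstacle is coordinating $\Sigma_{1}$-overspill with the genuinely $\Sigma_{1}$ (not $\Delta_{0}$) satisfaction predicate while keeping the witness inside $\fix(j)$; this is exactly where the cofinality of nonstandard fixed points from (1) is invoked.
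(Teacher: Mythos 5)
The paper does not prove this lemma; it is quoted verbatim from Bahrami--Enayat \cite{our}, so there is no in-paper argument to compare against. Evaluated on its own terms, your proposal has genuine gaps in both parts.

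For part (1), the central difficulty you flag is real and is not resolved by your sketch. The chain $f \mapsto \min(f, j\circ f) \mapsto \min(\,\cdot\,, j\circ\,\cdot\,) \mapsto \cdots$ is an \emph{external} recursion: each iterate is an element of $\M$, but the sequence of codes $c, j(c), j^{2}(c), \ldots$ is not itself coded, so no internal induction or overspill can be run along it, and there is no a priori reason for the chain to stabilise or for any stabilisation to produce a fixed point. The phrase ``the resolution should hinge on converting the hypothetical failure of agreement \ldots into an internally coded $\Delta_{0}$-function that itself refutes the very non-strongness witness'' is the statement of the missing idea, not an argument for it. As written, nothing in the sketch rules out the possibility that $f(n)\neq j(f(n))$ for \emph{every} standard $n$ with $f(n)$ nonstandard, which is exactly the case your argument must handle. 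So part (1) is not established.

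For part (2) there are two independent problems. First, the hypothesis is only $\M\models\I\Delta_{0}+\mathrm{Exp}$; you invoke $\mathrm{B}\Sigma_{1}$-collection and $\Sigma_{1}$-overspill, neither of which is available here (overspill for a formula class $\Gamma$ needs $\I\Gamma$, and $\I\Delta_{0}+\mathrm{Exp}\nvdash\mathrm{B}\Sigma_{1}$). Even granting $\I\Sigma_{1}$, the formula you want to overspill is not $\Sigma_{1}$: the clause $(w)_{i}=1\leftrightarrow i\in T$ uses the $\Sigma_{1}$-set $T$ both positively and negatively, pushing the formula to $\Sigma_{2}$ at best. Second, the final step --- ``using part (1) \ldots the witness $x$ may be chosen inside $\fix(j)$'' --- is a non sequitur: downward cofinality of nonstandard fixed points tells you there are small nonstandard fixed elements, but it does not convert an arbitrary $\Sigma_{1}$-witness into a fixed one, nor does it show that a code for the relevant bounded fragment of the type lies in $\fix(j)$ (that code is $\Delta_{0}$-definable from $a$, $m$, and a bound, and $a$ need not be fixed). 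That bridge is precisely the content that needs proving, and it is missing.

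In short, the proposal correctly identifies the natural objects to look at ($j(c)$, the agreement set, a coded initial fragment of the $\Sigma_{1}$-type) but does not supply the argument that makes them yield the conclusion; both the termination/stabilisation claim in (1) and the ``witness lands in $\fix(j)$'' claim in (2) are asserted rather than proved.
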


\item \textbf{Convention.} Suppose $ \M\models\I\Sigma_{1} $ and $ \langle\delta_{r}: \ r\in M\rangle $ is  a canonical enumeration of $ \Delta_{0}$-formulas in $ \mathcal{M} $. For every $ r\in M $: 
\begin{itemize}
	\item   $ f_{r}(\diamondsuit)=\blacklozenge$ denotes the following partial $ \Sigma_{1}$-function in $ \M $:  $$ \exists z ((z)_{0}=\blacklozenge \ \wedge \ z=\mu_{y}\mathrm{Sat}_{\Delta_{0}}(\delta_{r}(\diamondsuit,(y)_{0},(y)_{1})). $$
	\item The notation $ [f_{r}(\bar{x})\downarrow] $  denotes the $ \Sigma_{1}$-formula $  \exists z,y \ \mathrm{Sat}_{\Delta_{0}}(\delta_{r}(\bar{x},y,z)) $, and ${ [f_{r}(\bar{x})\downarrow]^{<w}} $  stands for the formula $  \exists z,y<w \ \mathrm{Sat}_{\Delta_{0}}(\delta_{r}(\bar{x},y,z)) $. 
\end{itemize}
Finally, we put $ \mathcal{F}(\M) $ to be the collection of all $ \emptyset$-definable partial $ \Sigma_{1}$-functions in $ \mathcal{M} $. As noted in \cite{our}, if $ \M $ and $ \N $ are two models of $ \I\Delta_{0} $ such that $ \Th_{\Sigma_{1}}(\M)=\Th_{\Sigma_{1}}(\N) $, then  $\F(\M)=\F(\N)= \F:=\{f_{n}: n\in\mathbb{N}\} $. Moreover, in \cite{our} it is shown that: $$ \mathrm{K}^{1}(\M;a)=\{f(a):  \ f\in\F \text{ and } \M\models[f(a)\downarrow] \} .$$
\end{itemize}

\section{$ I $-small $ \Sigma_{1}$-elementary submodels}
In \cite{las}, Lascar introduced a class of submodels of models of arithmetic, namely \textit{small} submodels, which resemble those submodels of a  model of set theory whose cardinality is less than the cardinality of the ground model. Then,  Enayat inspired by a result of Schmerl (stated without proof as Theorem 5.7 in \cite{kkk}), generalized this notion in \cite{en}. In this section we will  prove some results about these submodels. 
\begin{dfn}
 For a given proper cut $ I $  of a model $ \M $ of $ \I\Delta_{0}+\mathrm{Exp} $,  subset $ \X $ of $ \mm $ is called \textit{$ I$-small} in $ \M $  if there exists some $ a\in\mm $ such that $ \X=\{(a)_{i} : \  i\in I\} $, and $ (a)_{i}\neq(a)_{j} $ for all distinct $ i,j\in I $. When $ I=\mathbb{N} $, we simply use  \textit{small} for $ \mathbb{N}$-small.
\end{dfn}
 It is easy to see that for every model $ \M $ of $ \I\Sigma_{1} $, each proper cut $ I $ of $ \M $  is $ I$-small. Moreover, for every $ a\in\mm $, $ \mathrm{K}^{1}(\M;a) $ is small in $ \M $. In \cite{ks}, it is shown that  every recursively saturated model $ \M$ of $\pa $ possesses some small submodel which is not finitely generated. This  result can be generalized for $ I$-small submodels, when $ I $ is a strong cut of $ \M $ (see Theorem 3.2 below). Furthermore, By using compactness arguments, for every model $ \M $ of $ \I\Sigma_{1} $, we can find some elementary extension of $ \M $ in which it is small. And finally, in \cite{kos-kot96} it is shown that every nonstandard small submodel is a \textit{mixed} submodel (i.e. neither cofinal, nor initial segment). In a similar manner,   for every cut $ I $ of a model $ \M $ of $ \I\Sigma_{1} $, and each $ I$-small  submodel $ \M_{0} $ of $ \M $, if $ I\subsetneq M_{0} $ then $ M_{0} $ is mixed in $ \M $ (since if $ \mm_{0}:=\{(a)_{i}: \ i\in I\} $, and $ \A:=\{i\in I: \ \M\models\neg i\E(a)_{i} \} $, then $\A\in\ssy_{I}(\M)\setminus \ssy_{I}(\M_{0}) $. So $ \M_{0} $ cannot be an initial segment  of $ \M $).
 
 In the following lemma we will show that in the definition of $ I $-small, if $ I $ is a strong cut or it is equal to $ \mathbb{N} $, then  the condition $ (a)_{i}\neq(a)_{j} $ for all distinct $ i,j\in I $
 , can be eliminated:
\begin{lem}
	Suppose $ \mathcal{M}\models\mathrm{I}\Sigma_{1} $ is nonstandard, $ I\subsetneq_{e}\mathcal{M} $, $ \M_{0} $ is a submodel of $ \M $ such that $ \mm_{0}=\lbrace (a)_{i}:\ i\in I\rbrace $ for some  $ a\in M $. Then the following hold:
	\begin{itemize}
		\item[(1)] If $ I=\mathbb{N} $, then $ \M_{0} $ is small.
		\item[(2)] If $ I $ is strong in $ \mathcal{M} $, then $ \M_{0}$ is $ I$-small.
			\end{itemize}
\end{lem}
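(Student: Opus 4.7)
My plan is to construct, in each case, a new parameter $a'$ by applying $\Sigma_{1}$-overspill to a formula that captures the internal first-occurrence dedupping of the sequence coded by $a$.

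Define $\chi(n,a,w)$ to be the $\Sigma_{1}$-formula asserting: $w$ has length $n{+}1$ and there exist indices $k_{0}<k_{1}<\cdots<k_{n}$ (all packaged inside the outer existential, keeping $\chi\in\Sigma_{1}$) with $(w)_{i}=(a)_{k_{i}}$, the values $(a)_{k_{i}}$ pairwise distinct, and each $k_{i+1}$ minimal in the sense that every $k$ with $k_{i}<k<k_{i+1}$ yields a value already listed. Thus $\chi(n,a,w)$ says ``$w$ lists, in order of first appearance, the first $n{+}1$ distinct values of the $a$-sequence.''

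\textbf{For (1).} Since $M_{0}\supseteq\mathbb{N}$, for every $n\in\mathbb{N}$ the sequence $(a)$ realises at least the $n{+}1$ distinct standards $0,1,\dots,n$, so $\exists w\,\chi(n,a,w)$ holds. By $\Sigma_{1}$-overspill, there is a nonstandard $N$ and $w\in M$ with $\chi(N,a,w)$. Set $a':=w$. Pairwise distinctness of $(a')_{i}$ for $i\in\mathbb{N}$ is immediate from $\chi$. A short induction on $i\in\mathbb{N}$ shows $k_{i}\in\mathbb{N}$ (the $(i{+}1)$-th distinct value of $M_{0}$ is attained at some standard $a$-index), hence $(a')_{i}\in M_{0}$. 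Finally, every $m=(a)_{j_{0}}\in M_{0}$ with $j_{0}\in\mathbb{N}$ is the $r$-th distinct value for some $r\le j_{0}$, giving $m=(a')_{r}$.

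\textbf{For (2).} The same strategy applies, with $\Sigma_{1}$-overspill now performed on the cut $I$; this is available in $\I\Sigma_{1}$ via the least-element principle for $\Pi_{1}$-sets together with the closure of $I$ under successor (if $m$ is the minimum of $\{\neg\varphi\}$, then $m\notin I$ forces $m-1\notin I$, and $\varphi(m-1)$). The only new step is to show that $\exists w\,\chi(n,a,w)$ holds for every $n\in I$, equivalently that the internal counting function $g(k):=|\{(a)_{0},\dots,(a)_{k}\}|$ is unbounded on $I$. Assume for contradiction $g(k)\le C$ for all $k\in I$ and some $C\in I$; then every element of $M_{0}$ has its first occurrence at an index $\le C$, and since $[0,C]\subseteq I$, the internally coded set $V:=\{(a)_{i}:0\le i\le C\}$ externally coincides with $M_{0}$. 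External closure of $M_{0}$ under $+$ transfers to the internal sentence $\forall x,y\in V\,(x+y\in V)$, which applied to $x=y=\max V$ yields $2\max V\le\max V$, contradicting $\max V>0$ (since $1\in\mathbb{N}\subseteq M_{0}=V$). Hence $g$ is unbounded on $I$; strongness of $I$ (through the Paris--Kirby characterisation $(I,\ssy_{I}(\M))\models\mathrm{ACA}_{0}$) supports this step and underwrites the subsequent coding. Overspill then yields $n^{*}>I$ and $w\in M$ with $\chi(n^{*},a,w)$; setting $a':=w$, the verification that $a'$ witnesses $I$-smallness of $M_{0}$ proceeds exactly as in (1) with $I$ replacing $\mathbb{N}$: inductively $k_{i}\in I$ for $i\in I$, so $(a')_{i}\in M_{0}$, and any $m=(a)_{j_{0}}\in M_{0}$ with $j_{0}\in I$ is the $r$-th distinct value for some $r\le j_{0}\in I$.

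The principal obstacle is the elimination in (2) of the bounded-counting-function scenario; the submodel closure of $M_{0}$ under $+$ is the key algebraic input that blocks it, and strongness of $I$ is what guarantees the overspill step and the compatibility of the dedupping with the cut $I$.
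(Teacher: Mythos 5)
Your treatment of part (1) is essentially correct and matches the paper's approach: deduplicate via overspill and use finiteness of any putative $\Delta_0$-definable version of $M_0$ to force the first-occurrence indices to stay standard. The problem lies in part (2), where you have not actually used strongness of $I$, and the concrete claim you substitute for it is false.

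The gap: you reduce everything to showing that the internal counting function $g(k)=\lvert\{(a)_0,\dots,(a)_k\}\rvert$ is unbounded on $I$, and argue that if $g(k)\le C$ for all $k\in I$ (with $C\in I$) then ``every element of $M_0$ has its first occurrence at an index $\le C$,'' so that $V:=\{(a)_i:i\le C\}$ equals $M_0$. This implication does not hold. If $a$ codes, say, the sequence $(a)_j=\lfloor\log_2(j+1)\rfloor$, then $g(k)=\lfloor\log_2(k+1)\rfloor+1$ is bounded by $C$ for all $k<2^{C-1}$, yet first occurrences land at indices $2^n-1$, which for $n$ between $\log_2 C$ and $C$ are far larger than $C$. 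Choosing $I$ to sit between $2^{C/2}$ and $2^{C-1}$ shows that elements of $M_0$ can first occur at indices $>C$ while $g$ stays bounded by $C$ on $I$, so $V\ne M_0$. (This toy example need not satisfy the full hypotheses of the lemma, but it refutes the stated implication, which is purely about $g$, first occurrences, and $C$.) A second, related slip: ``$\exists w\,\chi(n,a,w)$ for all $n\in I$'' says the \emph{whole} $a$-sequence has more than $I$-many distinct values, which is strictly weaker than $g$ being unbounded on $I$ (the latter is what you actually need so that $k_i\in I$ for $i\in I$, avoiding an external induction over an arbitrary cut, which is not a valid proof technique). Finally, the sentence ``strongness of $I$ $\dots$ supports this step and underwrites the subsequent coding'' is a placeholder, not an argument: strongness is never invoked in a concrete way.

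What is needed, and what the paper does, is to apply strongness directly to the coded first-occurrence-index function $h$ (with $h(i)$ the index where the $i$-th new value first appears). Strongness yields $e>I$ such that for $i\in I$, $h(i)\in I$ iff $h(i)<e$. If some $h(i_0)\ge e$ with $i_0\in I$ (now a $\Delta_0$-definable minimisation, so a least such $i_0$ exists), then every $(a)_j$ with $j\in I$ already occurs at an index $\le h(i_0-1)\in I$, forcing $M_0=\{(a)_j:j\le h(i_0-1)\}$ to be a bounded $\Delta_0$-definable set; but a bounded $\Delta_0$-definable submodel has a maximum $m\ge 1$ and must contain $2m>m$, a contradiction. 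Hence $h(i)<e$, so $h(i)\in I$, for all $i\in I$, and the deduplicated sequence enumerates exactly $M_0$. Without this use of strongness, the unboundedness step in your proposal remains unjustified.
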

\begin{proof}
	First, we will inductively define the following $ \Delta_{0}$-function (with parameters) in $ \mathcal{M} $:
	\begin{center}
		$ g(0):=(a)_{0} $,  \\and\\
		$ g(x+1):=y $ iff $ \exists r<\mathrm{Len}(a) \ \left(\begin{array}{c}
		y=(a)_{r} \ \wedge \\  r=\mu_{z}\left(\begin{array}{c} \exists u<z
	\left(\begin{array}{c}g(x)=(a)_{u} \ \wedge \\ \forall w<z \ ( (a)_{w}\neq (a)_{z}  \wedge \exists v\leq u ((a)_{w}=(a)_{v}) )\end{array}\right)
	\end{array}\right)	\end{array}\right)
		  $.
	\end{center}
	Note that by the way we defined $ g $, its domain is an initial segment of $ \M $, and $ {\mathrm{Dom}(g)\leq\mathrm{Len}(a)} $. Moreover, since $ I $ and $ \mm_{0} $ are not $ \Delta_{0}$-definable in $ \M $, then  $ I\subsetneq\mathrm{Dom}(g) $. So  by $ \Sigma_{1}$-induction in $ \M $, we can find some $ d\in M $ such that $ (d)_{i}=g(i) $ for every $ i\in I $. Clearly, $ (d)_{i}\neq (d)_{j} $ for every distinct $ i,j\in I $, and $ {\mm_{0}\subseteq\lbrace (d)_{i}: \ i\in I\rbrace } $.  Now, in each case of the statement of theorem  we will prove that $ \lbrace (d)_{i}: \ i\in I\rbrace \subseteq\mm_{0} $: 
	\begin{itemize}
		\item[(1)] Suppose $ I=\mathbb{N} $.  If  $ \lbrace (d)_{n}: \ n\in\mathbb{N}\rbrace\nsubseteq\mm_{0} $, then  there exists the least number  $ n\in\mathbb{N} $ such that $ (d)_{n}\notin\mm_{0} $. So by the definition of $ g $, there exist some $ m\in\mathbb{N} $ and some $ r\in M\setminus\mathbb{N} $ such that ${ (d)_{n-1}=(a)_{m}} $ and $ (d)_{n}=(a)_{r} $. Therefore, by the definition of $ g $, it holds that $  \mm_{0}=\lbrace(a)_{0},...,(a)_{m}\rbrace$, which is a contradiction.
		\item[(2)] In the general case with the extra assumption that $ I $ is strong in $ \mathcal{M} $, consider the following partial $ \Delta_{0}$-function in $ \M $: 
		\begin{center}
			$h(x):=\mu_{r}( (d)_{x}=(a)_{r})$.
		\end{center}
		Since $ I $ is strong and $ I\subseteq\mathrm{dom}(h) $ (because $ g $ is well-defined on $ I $), there exists some $ e\in M $ such that $ h(i)\in I$ iff $ h(i)<e $, for all $ i\in I $. Moreover, by the definition of $ d$, $g $ and $ h $, for every $ i\in I $ it holds that $ (d)_{i}=(a)_{h(i)} $. So it suffices to prove that $ h(i)<e $ for every $ i\in I $. Suppose not;   so there exists some $ i_{0}\in I $ which is  the least element of $  M $ such that $ h(i_{0})>e $.  Now, by the way we defined $ g $ and $ h $, it holds that:\\
		
		$ \mathcal{M}\models\forall i<h(i_{0}) \ ((a)_{i}\neq(a)_{h(i_{0})}\wedge \exists j\leq h(i_{0}-1) ((a)_{i}=(a)_{j})) $. \\
		
		Therefore, $ \mm_{0}=\lbrace x\in M: \ \mathcal{M}\models\exists i\leq h(i_{0}-1) \ x=(a)_{i} \rbrace $. So $ \mm_{0} $  is $  \Delta_{0}$-definable in $ \mathcal{M} $, which is a contradiction.
	\end{itemize}
\end{proof}

In the following theorem, we will show that when $ I $ is strong, the basic properties which hold for small submodels, also hold for $ I$-small ones.

\begin{theorem}
	Let $ \mathcal{M}\models\mathrm{I}\Sigma_{1} $ be nonstandard, and $ I $ be a strong cut of $ \mathcal{M} $. Then:
	\begin{itemize}
	\item[(1)] For every $ a\in M $, $ \mathrm{K}^{1}(\mathcal{M};I\cup\lbrace a\rbrace) $ is $  I$-small.
	\item[(2)] If $ \mathcal{M}_{0} $ is an $  I$-small submodel of $ \mathcal{M} $, then $  I\subseteq M_{0} $.
	\item[(3)] If $ \M\models\pa $ is countable and recursively saturated, then there exists some $  I$-small elementary submodel of $ \mathcal{M} $ which is not of the form of $ \mathrm{K}(\mathcal{M}; I\cup\lbrace a\rbrace) $ for any $ a\in M $.
	\end{itemize}
\end{theorem}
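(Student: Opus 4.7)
For part (1), using the convention $f_{r}$ from the Preliminaries, $\mathrm{K}^{1}(\M;I\cup\{a\})$ equals $\{f_{r}(a,\bar{i}) : r \in \mathbb{N},\ \bar{i} \in I^{<\omega},\ \M \models [f_{r}(a,\bar{i})\downarrow]\}$. I encode each pair $(r,\bar{i})$ by a single element $\langle r,\bar{i}\rangle \in I$ (legitimate because any cut of a model of $\I\Sigma_{1}$ is closed under polynomial sequence-coding) and consider the partial $\Sigma_{1}$-function $G$ on $\mm$ with $G(\langle r,\bar{i}\rangle) := f_{r}(a,\bar{i})$. By $\Sigma_{1}$-collection in $\M$, for each $u \in \mm$ there is a uniform bound $v$ on the convergent values of $G$ below $u$, so the bounded-computation truncation $G_{v}$ is $\Delta_{0}$-definable with parameters and the sequence $\langle G_{v}(0),\ldots,G_{v}(u-1)\rangle$ is coded by some $d_{u} \in \mm$. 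The $\Sigma_{1}$ statement asserting the existence of such a $d_{u}$ holds for every $u \in I$, so $\Sigma_{1}$-overspill extends it to some $u_{0} > I$, producing $d \in \mm$ with $\mathrm{K}^{1}(\M;I\cup\{a\}) = \{(d)_{j} : j \in I\}$; Lemma 3.1(2) then removes repetitions and yields $I$-smallness.

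For part (2), fix $\mm_{0} = \{(a)_{i} : i \in I\}$ with $i \mapsto (a)_{i}$ injective on $I$, which forces $\mathrm{Len}(a) > I$ (else the default $0$-values at indices above $\mathrm{Len}(a)$ would violate injectivity). Define the coded $\Delta_{0}$-function $\tilde{h}$ on $\M$ by $\tilde{h}(j) := \mu_{y<\mathrm{Len}(a)}((a)_{y}=j)$, setting $\tilde{h}(j) := \mathrm{Len}(a)$ when no such $y$ exists. For $j \in I$, injectivity on $I$ gives $j \in \mm_{0}$ iff $\tilde{h}(j) \in I$. Strongness of $I$ supplies $e > I$ with $\tilde{h}(i) \in I \Leftrightarrow \tilde{h}(i) < e$ for every $i \in I$, so $B := \mm_{0} \cap I$ coincides with $I \cap \{j < e : \tilde{h}(j) < e\}$. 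Since $\mm_{0}$ is a submodel, $0 \in B$ and $B$ is closed under $+1$. Supposing $B \subsetneq I$, pick any $j_{1} \in I \setminus B$; the $\Delta_{0}$-bounded set $\{j \leq j_{1} : \tilde{h}(j) \geq e\}$ has a minimum $j_{0}$, which lies in $I$ and satisfies $j_{0} \geq 1$. Then $j_{0}-1 \in B \subseteq \mm_{0}$, and $+1$-closure forces $j_{0} \in \mm_{0} \cap I = B$, contradicting the choice of $j_{0}$. Hence $B = I$, i.e., $I \subseteq \mm_{0}$.

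For part (3), I adapt the Kossak-Schmerl construction of \cite{ks} to the $I$-small setting. Enumerate $\mm = \{c_{n} : n \in \omega\}$ and build $a \in \mm$ realizing a recursively enumerable type $p(x)$ over parameters in $\mm$ whose conditions ensure: (i) $\{(x)_{i} : i \in I\}$ is closed under $+$ and $\cdot$ and is elementary in $\M$, expressed via Tarski-Vaught tests using $\sat_{\Sigma_{m}}$; and (ii) for each $n$ with $\mathrm{K}(\M;I\cup\{c_{n}\}) \subsetneq \M$, some $(x)_{i}$ lies outside $\mathrm{K}(\M;I\cup\{c_{n}\})$. Finite satisfiability of $p$ holds because $\M$ is itself not $I$-small---every $I$-indexed enumeration $\{(b)_{i} : i \in I\}$ is bounded above by $b$ while $\M$ is unbounded---and outside each proper hull $\mathrm{K}(\M;I\cup\{c_{n}\})$ there are plentiful elements in the countable recursively saturated $\M$. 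Realizing $p$ produces the required $a$, and Lemma 3.1(2) converts its enumeration into the desired $I$-small elementary submodel; any remaining $n$ with $\mathrm{K}(\M;I\cup\{c_{n}\}) = \M$ is handled automatically, since the $I$-small $\mm_{0}$ is necessarily a proper subset of $\M$.

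The main obstacle is part (3): simultaneously packaging the elementarity conditions in (i) and the diagonalization conditions in (ii) into a single recursively enumerable type whose finite subtypes are satisfiable in $\M$, with careful bookkeeping of how positions of $a$ witness which existentials. Strongness of $I$ enters essentially through part (1), which underlies the verification that the auxiliary submodels produced during the construction remain $I$-small.
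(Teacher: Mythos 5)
Parts (1) and (2) are essentially sound. For (1) you use the same basic mechanism as the paper (bound the convergence witnesses via $\Sigma_1$-Collection, code the resulting truncated enumeration, then invoke Lemma 3.1(2) to discard repetitions), though your overspill framing is unnecessarily convoluted: the quantity you want to overspill on is not visibly $\Sigma_1$, since ``$v$ bounds all convergent values below $u$'' is $\Pi_1$; the paper avoids this by fixing a single $s>I$ and a single bound $b$ once and for all, after which the sequence can be built by $\Sigma_1$-induction. For (2) your argument is a genuinely different and more elementary route than the paper's: you apply strongness directly to the coded function $\tilde h(j)=\mu_{y<\mathrm{Len}(a)}((a)_y=j)$ and then run a $\Delta_0$-least-element argument in $\M$, whereas the paper invokes the Kirby--Paris theorem to get $(I,\ssy_I(\M))\models\mathrm{ACA}_0$ and argues inside the second-order structure $(I,X)\models\pa^*$. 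Your version avoids the detour through $\mathrm{ACA}_0$ and $\pa^*$ entirely; both are correct.

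Part (3) is where a genuine gap appears, and it is one you yourself flag as the ``main obstacle'' without actually resolving. The conditions in clause (ii), namely ``$(x)_{i_n}\notin K(\M;I\cup\{c_n\})$,'' are not formulas: each such condition is the negation of a countable disjunction ranging over all Skolem terms, and moreover the parameters $c_n$ range over all of $\mm$, so the collection of conditions is not a recursive type over a finite parameter set. Recursive saturation does not realize such an object, and your appeal to ``realizing $p$'' therefore has no warrant. One would instead need an external, step-by-step construction that alternately secures finitely many Tarski--Vaught conditions and finitely many diagonalization requirements while keeping the partial object extendable; you do not describe how to interleave these, nor how the choice of the witnessing index $i_n$ is made, nor why the finite stages remain consistent (the remark that $\M$ ``has plentiful elements'' outside each proper hull is not an argument). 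The paper sidesteps all of this by a different device: it takes a nonstandard inductive satisfaction class $S$ with $(\M;S)$ recursively saturated, forms $\N:=\mathrm{K}((\M;S);I\cup\{s\})$ for some $s>I$, and observes that $\N$ inherits a satisfaction class $S\cap N$, hence is itself recursively saturated, while $I$-smallness of $\N$ follows from part (1) applied in the expanded language. A recursively saturated $\N$ with $I\subsetneq_e\N$ cannot equal $\mathrm{K}(\M;I\cup\{a\})$, so no diagonalization over the $c_n$ is ever needed. If you want to salvage your approach you must replace ``realize a recursively enumerable type'' with an explicit finite-injury or back-and-forth construction and verify extendability at each stage; as written, the argument for (3) does not go through.
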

\begin{proof}
\begin{itemize}
\item[(1)] First fix some
 arbitrary $ s>I $. So by using  strong  $ \Sigma_{1}$-Collection in $ \mathcal{M} $ for the formula $ \sat_{\Delta_{0}}(\delta_{r}(i,a,z)) $, we will find some $ b\in M $ such that:
\begin{center}
	$ \mathcal{M}\models\forall \langle r,i\rangle<s \ ([f_{r}(i,a)\downarrow]\rightarrow[f_{r}(i,a)\downarrow]^{<b}) $.
\end{center}  
Then, by using  $ \Sigma_{1}$-induction we observe that $ \M\models\exists y \ \forall \langle r,i\rangle<s \  \varphi(y,r,i,a,b) $, in which  $ \varphi(y,r,i,a,b) $ is the following $ \Delta_{0}$-formula:
\begin{center}
	$  \left( \begin{array}{c}
([f_{r}(i,a)\downarrow]^{<b}\rightarrow(y)_{ \langle r,i\rangle}=f_{r}(i,a)) \ \wedge \
 (\neg[f_{r}(i,a)\downarrow]^{<b}\rightarrow(y)_{ \langle r,i\rangle}=0)
	\end{array}\right)   $.
\end{center}
As a result, if $ d\in\mm $ is  such that $ \M\models  \forall \langle r,i\rangle<s \  \varphi(d,r,i,a,b) $, then:
$$ \mathrm{K}^{1}(\mathcal{M}; I\cup\lbrace a\rbrace)=\{(d)_{i}: \ i\in I \}. $$ So by Lemma 3.1,   $ \mathrm{K}^{1}(\mathcal{M}; I\cup\lbrace a\rbrace) $ is $  I$-small in $ \mathcal{M} $.
\item[(2)] The exact argument used  in \cite[Thm. 4.5.1]{en} works here: 	let $ { M_{0}=\lbrace (a)_{i}: \ i\in I\rbrace} $ for some $ a\in M $ such that $ (a)_{i}\neq(a)_{j} $ for all distinct $ i, j\in I $. Then put:
$${ Z:=\lbrace \langle y,z\rangle\in M: \ \mathcal{M}\models (a)_{y}=z \rbrace }.$$
Since $ Z $ is $ \Delta_{0}$-definable in $ \mathcal{M} $, then $ X:= I\cap Z \in \mathrm{SSy}_{ I}(\mathcal{M}) $. As a result, because $  I $ is strong in $ \mathcal{M} $, $ ( I, X)\models\mathrm{PA}^{*} $.
Now, suppose $  I\nsubseteq M_{0} $. So $ ( I, X)\models\exists x \ (\forall y \ \langle y,x\rangle\notin X) $. Let $ ( I, X)\models \textbf{x}_{0}:=\mu_{x} ( \forall y \ \langle y,x\rangle\notin X)  $. Therefore, $ \textbf{x}_{0}\notin M_{0} $. So since $ \textbf{x}_{0}\neq 0 $, and by the definition of $ \textbf{x}_{0} $, we conclude that $ \textbf{x}_{0}-1\in M_{0} $, which contradicts the fact that $ \mathcal{M}_{0} $ is a submodel of $ \mathcal{M} $.
	\item[(3)] We will generalize the method used in \cite[Pro. 2.10]{ks}: let $  S $ be a  nonstandard inductive satisfaction class for $ \mathcal{M} $ such that $ (\M; S) $ is recursively saturated.  Put $ \mathcal{M}^{*}:=(\mathcal{M}; S) $, and $ \mathcal{N}:=\mathrm{K}(\mathcal{M}^{*};I\cup\{s\}) $ for some $ s>I $. First, note that $ \nn $ is $ I$-small in $ \M $: since $ \M^{*} $ is a countable recursively saturated model of $  \pa^{*}$, so it also possesses an inductive satisfaction class. Moreover,   $ I $ is also  strong in $ \M^{*} $. Therefore,  by repeating the argument used in the proof of part (1) of this theorem, and Lemma 3.1(2), we can show that  $ \nn $ is  $ I$-small in $ \mathcal{M} $. \\
Moreover, on one hand, it is easy to see that $  S\cap\nn $ is a  nonstandard satisfaction class for the $ \Ll_{\A}$-structure $ \N $. So $ \N $ is also a  recursively saturated model of $ \pa $. On the other hand, $ I $ is a proper initial segment of $ \N $ (because $ s>I $). Therefore, $ \N $ is of the form of $ \mathrm{K}(\M;I\cup\{a\}) $ for no $ a\in\mm $. 

\end{itemize}
\end{proof}

The  following lemma will be useful in the proof of the main theorem of this section:

\begin{lem}
	Suppose $ \mathcal{M}\models\mathrm{I}\Sigma_{1} $,  $  I $ is a strong cut of $ \mathcal{M} $, and  $ a\in\mm\setminus I $  such that $ (a)_{i}\neq(a)_{j} $ for all distinct $ i,j\in I $. Moreover, let   $ { M_{0}=\lbrace (a)_{i}: \ i\in I\rbrace} $ be a $ \Sigma_{1} $-elementary submodel of $ \M $,  $  X\subseteq M_{0}$ be coded in $ \mathcal{M} $, and  $ i_{0}\in I $ such that $ i<i_{0}$ for all $ (a)_{i}\in\X $. Then $  X $ is coded in $ \mathcal{M}_{0} $.
\end{lem}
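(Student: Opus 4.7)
The argument will proceed in two stages.

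In the first stage, I reduce the problem to coding the ``index set'' $Y := \{ i < i_0 : (a)_i \in X \}$. Let $d \in \mm$ be a code of $X$ in $\M$, so $d_{\E} = X$, and put $c := \sum_{i \in Y} 2^i$, so that $c_{\E} = Y$ and $c < 2^{i_0}$. Since $I$ is strong in a model of $\I\Sigma_{1}$, $I$ is closed under exponentiation, hence $2^{i_0} \in I$. As $I$ is an initial segment of $\M$, $c \in [0, 2^{i_0}) \subseteq I$; combined with $I \subseteq M_0$ (Theorem 3.2(2)), this yields $c \in M_0$ with $c_{\E} = Y$.

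In the second stage, I lift the coding of $Y$ to a coding of $X$. The natural candidate code in $\M$ is $e := \sum_{i\E c} 2^{(a)_i}$, a $\Sigma_{1}$-definable value built from the parameters $a, c, i_0$. To realize $e$ inside $M_0$, I would first produce some $b \in M_0$ playing the role of the restriction $a \upharpoonright i_0$---that is, with $(b)_i = (a)_i$ for every $i < i_0$. Once such a $b$ is in hand, the sum defining $e$ becomes $\Sigma_{1}$-definable from parameters $b, c, i_0$ all lying in $M_0$, so $e \in M_0$ by the $\Sigma_{1}$-closure of $\M_0$ (a consequence of $\M_0 \preceq_{\Sigma_{1}} \M$); a direct check then yields $e_{\E} = X$.

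The main obstacle will be the construction of $b \in M_0$. An external induction on $j$ easily gives $a \upharpoonright j \in M_0$ for every standard $j$, using $(a)_j \in M_0$ and the $\Sigma_{1}$-definability of sequence concatenation at each successor step, but $i_0$ may well be nonstandard. I would therefore internalize the induction by examining $T := \{ j \in I : \exists b \in M_0,\ \forall i < j\ (b)_i = (a)_i \}$, which is successor-closed in $I$ and contains $0$. Using strongness of $I$, together with the description $M_0 = \{ (a)_i : i \in I \}$ and suitable bounding of the existential witness $b$, one argues that $T$ is $\Sigma_{1}$-definable in $\M$, so $T \in \ssy_I(\M)$. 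Since $(I, \ssy_I(\M)) \models \mathrm{ACA}_{0}$, the first-order reduct $I$ satisfies $\mathrm{PA}$, and an induction argument within $(I, \ssy_I(\M))$ then forces $T = I$. Evaluating at $j = i_0 \in I$ yields the desired $b$, completing the proof.
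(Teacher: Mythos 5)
Your plan is plausible in outline, but it leaves its central technical step as a black box. Stage~1 is fine, and the Stage~2 reduction to producing some $b\in M_0$ with $(b)_i=(a)_i$ for all $i<i_0$ is a sound variant of what the paper does (the paper targets the increasing enumeration $\sigma$ of $X$ and proves it lies in $M_0$ by a $\Delta_0$-induction inside $\M$, whereas you target a prefix of $a$ and appeal to second-order induction in $(I,\ssy_I(\M))$). The genuine gap is the assertion that $T=\{j\in I:\exists b\in M_0\ \forall i<j\ (b)_i=(a)_i\}$ belongs to $\ssy_I(\M)$, which you credit to ``strongness of $I$, the description $M_0=\{(a)_i:i\in I\}$, and suitable bounding of the existential witness $b$'' without giving the argument --- and this is exactly the point where the strongness hypothesis does all of the work in the lemma.

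To see why the pieces you list do not combine on their own: bounding $b$ (say by taking $b=a\upharpoonright_{j}<a$) is harmless, and substituting the description of $M_0$ rewrites the defining condition as ``$\exists u\in I\ \forall i<j\ ((a)_u)_i=(a)_i$''. But ``$\exists u\in I$'' is not an arithmetical quantifier in $\M$, since $I$ is a cut, so this is still not $\Sigma_1$-definable and bounding $b$ does not change that. What is actually needed is to introduce the $\Delta_0$-partial function $h(j):=\mu_u\bigl(u<\mathrm{Len}(a)\wedge a\upharpoonright_{j}=(a)_u\bigr)$, invoke strongness of $I$ to obtain some $e>I$ such that $h(j)\in I\Leftrightarrow h(j)<e$ for all $j\in I$ at which $h$ is defined, and then observe that $T=\{j\in I:h(j)\text{ defined}\wedge h(j)<e\}$ is $\Delta_0$-definable with parameters $a,e$ and hence coded. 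This is precisely the device the paper's own proof makes explicit (there via an analogous $h$ acting on set codes, together with the element $e$ supplied by strongness, followed by a $\Delta_0$-induction in $\M$). Until that step is supplied, the appeal to induction in $(I,\ssy_I(\M))$ does not get off the ground; with it supplied, your alternative route should close up.
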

\begin{proof}
Suppose $ \alpha\in M $ codes $  X $ in $ \M $. So $ \mathcal{M}\models \overset{\delta(\alpha,\sigma,i_{1})}{\overbrace{\alpha=\sum_{i<i_{1}}2^{(\sigma)_{i}}} }$, in which $ i_{1}=\mathrm{Card}( X)\leq i_{0} $ and $ \sigma:=\langle x: \ x\mathrm{E}\alpha\rangle $ (so $ \mathrm{Len}(\sigma)=i_{1} $). Since $ \delta(x,y,z) $ is a $ \Delta_{0}$-formula and $ \mathcal{M}_{0}\prec_{\Sigma_{1}}\mathcal{M} $, it suffices to prove that $ \sigma\in M_{0} $. For this purpose let $ Y:=\{i<i_{0}:\ \mathcal{M}\models (a)_{i}\E\alpha\} $. Then there exists some $ \gamma\in I $ which codes $ Y $.\\ Now, we define:
	 \begin{equation*}
	h(z) :=
	\begin{cases}
	\mu_{u}(\langle (a)_{x}: \ x\mathrm{E}z \rangle=(a)_{u}\wedge u<\mathrm{Len}(a)) & \text{if $\mathcal{M}\models\exists u<\mathrm{Len}(a) \ \langle (a)_{x}: \ x\mathrm{E}z \rangle=(a)_{u}$;
	}
	\\
	0 & \text{otherwise}
	\end{cases}
	\end{equation*} 
Since 	$  I $ is  strong in $ \M $, there exists some $ e $ such that $ h(i)>e $ iff $ h(i)>I $, for all $ i\in I $.  We claim that  $ \mathcal{M}\models \forall x \ \varphi(x,a,\gamma,e) $,  where $\varphi(x,a,\gamma,e)  $ is the following $ \Delta_{0}$-formula:
	\begin{center}
	$ \forall y<\mathrm{Len}(x) \ \exists z\mathrm{E}\gamma \ ((x)_{y}=(a)_{z})\rightarrow\exists w<\min\{e,\mathrm{Len}(a)\} \ (x=(a)_{w}) $.
	\end{center}
Therefore, $ \M\models\varphi(\sigma,a,\gamma,e) $, which implies that  $ \sigma=(a)_{c} $ for some $ c<\min\{e,\mathrm{Len}(a)\} $. So $ \sigma=(a)_{h(\gamma)} $ and $ h(\gamma)<e $, which implies that $ \sigma\in M_{_{0}} $.\\
In order to prove the above claim, we will use $ \Delta_{0}$-induction inside $ \M $: let $ \textbf{x}\in M $ such that  $ {\mathcal{M}\models\varphi(w,a,\gamma,e)} $ for every $ w<\textbf{x} $, and $ \mathcal{M}\models\forall y<\mathrm{Len}(\textbf{x}) \ \exists z\mathrm{E}\gamma \ ((\textbf{x})_{y}=(a)_{z}) $. So by induction hypothesis ${\mathcal{M}\models \textbf{x}\upharpoonright_{\mathrm{Len}(\textbf{x})-1}=(a)_{\textbf{z}} }$ for some $ \textbf{z}<\min\{e,\mathrm{Len}(a)\} $. Then, we put $ Z:=\{i<\gamma: \ \mathcal{M}\models\exists y<\mathrm{Len}(\textbf{x})-1 \ (\textbf{x})_{y}=(a)_{i}\} $, and let $ \textbf{z}_{0}\in I $ code $ Z $. As a result, $ h(\textbf{z}_{0})\leq \textbf{z}<\min\{e,\mathrm{Len}(a)\} $, which implies that $ \textbf{x}\upharpoonright_{\mathrm{Len}(\textbf{x})-1}=(a)_{h(\textbf{z}_{0})}\in M_{0} $. So since $ \mathcal{M}_{0}\prec_{\Sigma_{1}}\mathcal{M} $, then $ \textbf{x} $ is in $  M_{0} $. Therefore, $ \textbf{x}=(a)_{i} $ for some $ i\in I<\min\{e,\mathrm{Len}(a)\} $.
\end{proof}

Now we are ready to prove the main theorem and corollary of this section. The method we use for proving   Theorem 3.4 is a  a combination of the back-and-forth method used in \cite[Thm. 6.1]{our} and \cite[Thm. 5.6]{kkk}. 

\begin{theorem}
	Assume $ \mathcal{N}\models \mathrm{I}\Sigma_{1} $ is countable and nonstandard, $  I $ is a strong cut of $ \mathcal{N} $, and $ \mathcal{N}_{0} $ is an $  I$-small $ \Sigma_{1} $-elementary submodel of  $ \N $ such that $ I\neq\nn_{0} $. Then there exists some proper initial self-embedding $ j $ of $ \mathrm{H}^{1}(\N;\nn_{0})  $ such that $ { N_{0}=\mathrm{Fix}(j)} $.

\end{theorem}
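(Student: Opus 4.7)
Set $\mathcal{M}:=\mathrm{H}^{1}(\mathcal{N};N_{0})$. By Theorem~2.1(3), $\mathcal{M}$ is a countable nonstandard $\Sigma_{1}$-elementary substructure of $\mathcal{N}$ satisfying $\mathrm{B}\Sigma_{2}$ (in particular $\mathrm{I}\Sigma_{1}$), and since $N_{0}\subseteq\mathcal{M}\prec_{\Sigma_{1}}\mathcal{N}$ we get $N_{0}\prec_{\Sigma_{1}}\mathcal{M}$. As $\ssy_{I}(\mathcal{M})=\ssy_{I}(\mathcal{N})$ (by $\Sigma_{1}$-elementarity) and $I$ is strong in $\mathcal{N}$, the Paris--Kirby characterization shows $I$ remains a strong cut of $\mathcal{M}$. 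Rerunning the argument of Theorem~3.2(1) inside $\mathcal{M}$ relocates the $I$-small code into $\mathcal{M}$: one obtains $a\in M$ such that $N_{0}=\{(a)_{i}:i\in I\}$ with $(a)_{i}\neq(a)_{j}$ for distinct $i,j\in I$. Fix also $b^{*}\in M$ with $b^{*}>a$; the embedding constructed below will have image bounded by $b^{*}$, ensuring proper initiality.

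\textbf{Back-and-forth construction.} Following the template of \cite[Thm.~6.1]{our} (the case $N_{0}=I$) combined with the $I$-small ideas of \cite[Thm.~5.6]{kkk}, enumerate $M=\{m_{n}:n<\omega\}$ and recursively construct triples $(c_{n},d_{n},b_{n})\in M^{3}$ starting from $(0,0,b^{*})$, always maintaining the hypothesis of Remark~2.1 with parameter $a$:
\begin{equation*}
\mathcal{M}\models\exists z\,\delta(z,c_{n},(a)_{i}) \ \Longrightarrow\ \mathcal{M}\models\exists z<b_{n}\,\delta(z,d_{n},(a)_{i})
\end{equation*}
for every $\Delta_{0}$-formula $\delta$ and every $i\in I$. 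Stages alternate among \emph{forth} (append $m_{n}$ to $c_{n}$ and some target to $d_{n}$), \emph{back} (append some source to $c_{n}$ and $m_{n}$ to $d_{n}$, ensuring range-initiality), and \emph{no-fix} (when $m_{n}\in M\setminus N_{0}$, arranging the appended target to be distinct from $m_{n}$). Existence of valid updates at forth and back steps follows from strong $\Sigma_{1}$-collection in $\mathcal{M}$. A standard diagonalization glues the stagewise commitments into a total map $j:M\to M$; Remark~2.1 applied at every stage certifies that $j$ is a proper initial self-embedding of $\mathcal{M}$ with $j((a)_{i})=(a)_{i}$ for all $i\in I$, hence $N_{0}\subseteq\fix(j)$.

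\textbf{Main obstacle.} The crux is the no-fix step: given $m_{n}\in M\setminus N_{0}$, produce a target $v\neq m_{n}$ for which the displayed condition persists. If instead $v=m_{n}$ were the sole admissible target, the admissibility predicate would pin down $m_{n}$ as the unique element of a set $\Sigma_{1}$-definable over the existing parameters and the $a$-coordinates; Lemma~3.3, applied after restricting indices to an initial slice of $I$, would force this singleton to be coded inside $\mathcal{N}_{0}$. Combined with $N_{0}\prec_{\Sigma_{1}}\mathcal{M}$, this places $m_{n}\in N_{0}$ and contradicts $m_{n}\notin N_{0}$. Hence a distinct target always exists, the constructed $j$ satisfies $\fix(j)\subseteq N_{0}$, and combining with $N_{0}\subseteq\fix(j)$ from the previous paragraph yields $\fix(j)=N_{0}$, as required.
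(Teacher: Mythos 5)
Your outline correctly identifies the basic frame — work inside $\mathcal{M}=\mathrm{H}^{1}(\mathcal{N};N_{0})$, observe $I$ stays strong there, relocate the $I$-small code $a$ into $M$ (the paper does this by $\Sigma_{1}$-Overspill, not by rerunning Theorem 3.2(1), but that is a cosmetic point), and run a back-and-forth maintaining the hypothesis of Remark 2.1 so that $j$ is a proper initial self-embedding fixing $N_{0}$ pointwise. The genuine gap lies in the ``no-fix'' step, and it is not a repairable detail but a structural misunderstanding of what must be controlled.

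Merely arranging $j(m_{n})\neq m_{n}$ when $m_{n}\notin N_{0}$ does not give $\mathrm{Fix}(j)\subseteq N_{0}$. When you commit $j(\bar{u},m_{n})=(\bar{v},v)$, you implicitly commit $j\bigl(f(\bar{u},m_{n},(a)_{i})\bigr)=f(\bar{v},v,(a)_{i})$ for every $f\in\mathcal{F}$ and $i\in I$. If for some such $f,i$ the element $w:=f(\bar{u},m_{n},(a)_{i})$ lies outside $N_{0}$ yet $f(\bar{u},m_{n},(a)_{i})=f(\bar{v},v,(a)_{i})$, then $w$ is forced to be a fixed point of $j$, and you discover this only when $w$ later enters the enumeration with no freedom left. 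This is precisely why the paper's back-and-forth maintains the stronger invariant $\mathrm{Q}(\bar{u},\bar{v})$: for every $f\in\mathcal{F}$ and $i\in I$, whenever $f(\bar{u},(a)_{i})\notin N_{0}$ one must already have $f(\bar{u},(a)_{i})\neq f(\bar{v},(a)_{i})$. Realizing a target $v$ satisfying both the $\mathrm{P}$-condition and this $\mathrm{Q}$-condition is the hard part; the paper proves finite satisfiability of the relevant bounded $\Pi_{1}$ type via the claims $(\ast_{k})$, a delicate choice of the overspill parameter $s$ using strongness of $I$, the arithmetical closure of $\ssy_{I}(\mathcal{M})$ (to get the set $C$ of ``bad'' indices coded), Lemma 3.3 (to push codes into $N_{0}$), and a $\Sigma_{1}$-Pigeonhole argument in the base case $k_{0}=1$. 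Your sketch does not engage with any of this: the ``admissibility predicate'' you appeal to is a type over $I$, not a $\Sigma_{1}$-definable set, so the uniqueness-forces-coding argument does not go through as stated; and even if it did, excluding $v=m_{n}$ alone would still leave the implicit fixed points described above unchecked. The same omission occurs at the back steps, where you give no provision against $u=m_{n}$ or against the analogous implicit collisions.
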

\begin{proof}
Put $ \M:=\mathrm{H}^{1}(\N;\nn_{0})  $. So by Theorem 2.1, $ \M $ is a $ \Sigma_{1}$-elementary initial segment of $ \N $ such that $ \M\models\I\Sigma_{1} $, and it is easy to see that $ I $ is also strong in $ \M $. Moreover,  since $ N_{0}\neq I $, by using $ \Sigma_{1} $-Overspill in $ \M $ we can find some $ a\in M $ such that $ N_{0}=\{(a)_{i}: \ i\in I\} $ and $ (a)_{i}\neq(a)_{i} $ for distinct $ i,j\in I $. In order to construct $ j $, first by using strong $ \Sigma_{1}$-Collection in $ \mathcal{M} $, we will find some $ b\in M $ such that:
\begin{center}
	$ \mathcal{M}\models [f((a)_{i})\downarrow]\rightarrow[f((a)_{i})\downarrow]^{<b} $, for all $ f\in\mathcal{F} $ and all $ i\in I $.
\end{center}

Then, by using back-and-forth method we will inductively build finite functions   $\bar{u}\mapsto\bar{v}$  such that $ \bar{u},\bar{v}\in\mm $, and $\mathcal{M}\models (\bar{v}<b \ \wedge \ \mathrm{P}(\bar{u},\bar{v}) \ \wedge \mathrm{Q}(\bar{u},\bar{v}) )$, in which: 
	\begin{center}
	$ \mathrm{P}(\bar{u},\bar{v})\equiv  [f(\bar{u},(a)_{i})\downarrow]\rightarrow[f(\bar{v},(a)_{i})\downarrow]^{<b} $, for all $f\in\mathcal{F} $ and $ i\in I $;\\ and\\
	$ \mathrm{Q}(\bar{u},\bar{v})\equiv \left(\begin{array}{c}
[f(\bar{u},(a)_{i})\downarrow]\wedge [f(\bar{v},(a)_{i})\downarrow]^{<b} \wedge\\ f(\bar{u},(a)_{i})\notin N_{0}
	\end{array}\right)   \Rightarrow{f(\bar{u},(a)_{i})\neq f(\bar{v},(a)_{i})} $, for all $ f\in\mathcal{F}$ and all $ i\in I $.
\end{center}
Through the `forth' stages of back-and-forth we shall  make the domain of $ j $ to be equal to $ \mm $, and  `back' stages are for making the range of $ j $ to be an initial segment of  $ \M $. For the first step of induction, we will choose $0\mapsto0 $. Then, suppose   $\bar{u}\mapsto\bar{v}$ is built  such that $\mathcal{M}\models (\bar{v}<b \ \wedge \ \mathrm{P}(\bar{u},\bar{v}) \ \wedge \mathrm{Q}(\bar{u},\bar{v}) )$.\\

\textbf{`Forth' stages:} Let $ m\in\mm\setminus\{\bar{u}\} $.  By the definition of $ \M $, without loss of generality, we can assume that $ m\leq t(\bar{u},\p) $ for some $ t\in\F $ and $ \textbf{i}\in I $.   In order to find some image for $ m $, first note that since $ \mathrm{P}(\bar{u},\bar{v}) $ holds in $ \M $,   Theorem 2.3 and Remark 1 imply that:\\

	$ (1):\ \ $ There exists some initial self-embedding $ j_{0} $ of $ \mathcal{M} $ such that $ j_{0}( M)<b $, $ j_{0}(\bar{u})=\bar{v} $, and $  N_{0}\subseteq\mathrm{Fix}(j_{0}) $.\\
	
	Then, we define:\\
	
	$  C:=\left\lbrace
	\langle r,i\rangle\in I: \ \mathcal{M}\models[f_{r}(\bar{u},m,(a)_{i})\downarrow] \ \text{and} \ f_{r}(\bar{u},m,(a)_{i})\notin\mathrm{K}^{1}(\M; N_{0}\cup\{\bar{u}\})
	\right\rbrace  $.\\ 
	
	We claim that $  C\in\mathrm{SSy}_{ I}(\mathcal{M}) $; so there exists  some $\alpha\in M  $ such that $  C= I\cap\alpha_{\mathrm{E}} $. To prove this claim, let:\\
	
	$  R:=\left\lbrace \langle \langle r,i\rangle,k,t\rangle\in I: \ \mathcal{M}\models 
	\left(\begin{array}{c}
	([f_{r}(\bar{u},m,(a)_{i})\downarrow]\wedge [f_{t}(\bar{u},(a)_{k})\downarrow])\rightarrow\\ f_{r}(\bar{u},m,(a)_{i})=f_{t}(\bar{u},(a)_{k})
	\end{array}\right)\right\rbrace $.\\
	
	On one hand, since $  R $ is $ \Pi_{1} $-definable in $ \mathcal{M} $, then $  R\in\mathrm{SSy}_{ I}(\mathcal{M}) $. On the other hand, by Lemma 3.2(2), it holds that:
	\begin{center}
		${ I\setminus C= \overset{ B}{\overbrace{\lbrace  \langle r,i\rangle\in I: \ ( I, R)\models  \exists k,t \ \langle\langle r, i\rangle,k,t\rangle\in R\rbrace}}} $.
	\end{center}
	Since $ I $ is strong in $ \M $, which implies that  $ (I,\ssy_{I}(\M))\models\mathrm{ACA}_{0} $, and because $  B $ is arithmetical in $  R $ and $  R\in\mathrm{SSy}_{ I}(\mathcal{M}) $, we may deduce that $  B\in\mathrm{SSy}_{ I}(\mathcal{M}) $, and consequently $  C\in\mathrm{SSy}_{ I}(\mathcal{M}) $.

	Now, for every $ s\in M $, we define:
	\begin{center}
		$ p_{s}(y):= \lbrace y\leq t(\bar{v},\p)\rbrace\cup p_{s1}(y)\cup p_{s2}(y)$; where: \\
		\vspace{.5cm}
		${p_{s1}(y):=\lbrace\forall i<s([f(\bar{u},m,(a)_{i})\downarrow]\rightarrow [f(\bar{v},y,(a)_{i})\downarrow]^{<b}): f\in\mathcal{F}\rbrace;} $\\ and \\ 
		$p_{s2}(y):=\left\lbrace \forall i<s \left(\begin{array}{c}
		([f_{n}(\bar{v},y,(a)_{i})\downarrow]^{<b} \wedge\langle n,i\rangle\mathrm{E}\alpha)\rightarrow \\
		
		f_{n}(\bar{u},m,(a)_{i})\neq f_{n}(\bar{v},y,(a)_{i})
		\end{array}\right):   \ n\in\mathbb{N} \right\rbrace$.
	\end{center}
	
	We shall show that there is some $ s> I $ such that $ p_{s} $ is finitely satisfiable; then since $ p_{s} $ is $ \Pi_{1} $, bounded and recursive, there exists some $ m' $ which realises  $ p_{s} $ in $ \mathcal{M} $. Therefore, $ m' $ serves as the image of $ m $, and this finishes the `forth' stage.
	
	In order to find such $ s $, we claim that for every $ k\in \mathbb{N} $ it holds that:
	
	$ (\ast_{k}): \\ \ \left(
	\begin{array}{c}
	\text{For every }   f\in\mathcal{F}, \text{ every } z\in N_{0}, \\ \text{ and any nonempty  finite set }  \lbrace f_{n_{0}},...,f_{n_{k}}\rbrace  \text{ of elements of }  \mathcal{F}, \\ \text{ there exists some } s> I \text{ such that } \M\models\Psi(f,f_{n_{0}},...,f_{n_{k}},\bar{u},m,\bar{v},b,a,s,\alpha,z,\p), \\
	\text{ where } \Psi(f,f_{n_{0}},...,f_{n_{k}},\bar{u},m,\bar{v},b,a,s,\alpha,z,\p) \text { is the following } \Pi_{1}\text{-formula: }\\

	{ \exists y\leq t(\bar{v},\p) \left(\begin{array}{c}
		\forall i<s([f(\bar{u},m,(a)_{i},z)\downarrow]\rightarrow[f(\bar{v},y,(a)_{i},z)\downarrow]^{<b}) \ \wedge  \\ \forall i<s \bigwedge_{t\leq k}\left(\begin{array}{c}
		([f_{n_{t}}(\bar{v},y,(a)_{i})\downarrow]^{<b} \wedge	\langle n_{t},i\rangle\mathrm{E}\alpha)\rightarrow \\ 
		f_{n_{t}}(\bar{u},m,(a)_{i})\neq f_{n_{t}}(\bar{v},y,(a)_{i})
		
		\end{array}\right)
		\end{array}\right) }  
	\end{array}\right)$.\\

	This claim completes the proof in the following way:

	Let $ d> I $ be an arbitrary and fixed element of $ \mm $.  Suppose   $i,s\in M$,  and\\ $  \Theta(s,i,\bar{u},m,\bar{v},b,a,\alpha,\beta,\p) $ is the following $ \Delta_{0}$-formula:
	\begin{center}
		${\forall r<i \ \exists y\leq t(\bar{v},\p)  \left(\begin{array}{c}
			\forall w<s (\langle r,w\rangle\mathrm{E}\beta\rightarrow [f_{r}(\bar{v},y,(a)_{w})\downarrow]^{<b}) \ \wedge  
			\\ \forall w<s \forall r'<i\left(\begin{array}{c}
			([f_{r'}(\bar{v},y,(a)_{w})\downarrow]^{<b}\wedge	\langle r',w\rangle\mathrm{E}\alpha)\rightarrow \\ f_{r'}(\bar{u},m,(a)_{w})\neq f_{r'}(\bar{v},y,(a)_{w})
			\end{array}\right)
			\end{array}\right) } $;
	\end{center}
		where $ \beta $ is the code of the following $ \Sigma_{1}$-definable set in $ \mathcal{M} $:\\
	
		$ L:=\lbrace \langle r,w\rangle<d: \ \mathcal{M}\models[f_{r}(\bar{u},m,(a)_{w})\downarrow]\rbrace  $.\\
	
	Now, for every $ i\in M $, we define:
	\begin{center}
		$ g(i):=\mathrm{max}\lbrace x<d: \ \mathcal{M}\models \Theta(x,i,\bar{u},m,\bar{v},b,a,\alpha,\beta,\p) \rbrace $.
	\end{center}
	
	Clearly $ g  $ is $ \Delta_{0}$-definable function in $ \mathcal{M} $, and $ I\subseteq\mathrm{Dom}(g)  $ (we assume $ \mathrm{max}(\emptyset)=0 $). Therefore, since $  I $ is strong, there exists some $ e>  I $ such that for all $ i\in  I $, $ g(i)>  I $ iff $ g(i)>e $. We will show that $ p_{e}(y) $ is a finitely satisfiable type.  First, note that by statement $ (1) $, $ p_{e1}(y) $ is closed under conjunctions. So let $f_{n}, f_{n_{0}},...,f_{n_{k}} $ be some finite number of elements of $ \mathcal{F} $, and let $ n^{*}=\mathrm{max}\lbrace n,n_{0},...,n_{k}\rbrace $.  Then, use $(\ast_{n^{*}}) $, $ (n^{*}+2)$-many times; i.e for every $ t=0,...,n^{*}+1 $  consider $ f_{t} $ instead of $ f $ in the assertion of $ (\ast_{n^{*}})  $, $ 0\in M_{0} $ instead of $ z $, and $ f_{1},...,f_{n^{*}} $.  So by statement $ (\ast_{n^{*}})  $, for every $ t=0,...,n^{*}+1 $  there exists some $ s_{t}> I $ such that  $ \M\models\Psi(f_{t},f_{0},...,f_{n^{*}+1},\bar{u},m,\bar{v},b,a,s_{t},\alpha,0,\p) $. Then, let $ s^{*}:=\mathrm{min}\lbrace s_{t}: \ t<n^{*}+1\rbrace $. Therefore,  $ {\mathcal{M}\models \Theta(s^{*},n^{*},\bar{u},m,\bar{v},b,a,\alpha,\beta,\p)} $. \\ It is easy to see that if $ d\leq s^{*}$ then $ g(n^{*})=d-1 $, and if $ s^{*}<d $ then $ s^{*}\leq g(n^{*}) $; so in both cases $ g(n^{*})> I $ and consequently $ g(n^{*})>e $. So $\mathcal{M}\models\Theta(e,n^{*},\bar{u},m,\bar{v},b,a,\alpha,\beta,\p)$; this proves that $ p_{e} $ is finitely satisfiable.\\
	
	\textit{Proof of the claim $ (\ast_{k}) $ for every $ k\in\mathbb{N} $:} Suppose the claim is not true; i.e there is some $ k\in\mathbb{N} $ for which there exists some nonempty finite set $ \lbrace f,f_{n_{0}},...,f_{n_{k}}\rbrace $ of elements of $ \mathcal{F} $, and some $ \textbf{z}\in N_{0} $ such that for all  $ s> I $ it holds that: 
	$$ \mathcal{M}\models\neg  \Psi(f,f_{n_{0}},...,f_{n_{k}},\bar{u},m,\bar{v},b,a,s,\alpha,\textbf{z},\p). $$

	Therefore, by $ \Sigma_{1}$-Underspill in $ \mathcal{M} $, there exists some $ s\in I $ such that:\\
	
	$$ \mathcal{M}\models \neg\Psi(f,f_{n_{0}},...,f_{n_{k}},\bar{u},m,\bar{v},b,a,s,\alpha,\textbf{z},\p). $$
	
	$ (2):\ \ $ Let $ k_{0}\in\mathbb{N} $  be the least natural number, for which
	there exists a set  $\lbrace f, f_{n_{0}},...,f_{n_{k_{0}}}\rbrace $ of  elements of $ \mathcal{F} $, some $ \textbf{z}_{0}\in N_{0} $, and some $ s_{0}\in I $ such that:  $$ {\mathcal{M}\models \neg\Psi(f,f_{n_{0}},...,f_{n_{k_{0}}},\bar{u},m,\bar{v},b,a,s_{0},\alpha,\textbf{z}_{0},\p)}. $$
	
		Put:
	\begin{center}
		$  X:=\lbrace x\in M: \ \mathcal{M}\models\exists i<s_{0}(x=(a)_{i}\wedge [f(\bar{u},m,(a)_{i},\textbf{z}_{0})\downarrow] ) \rbrace $; \\and\\
		$ X':=\lbrace \langle n,x\rangle\in M: \ \mathcal{M}\models\exists i<s_{0}(x=(a)_{i}\wedge \bigvee_{t=0}^{k_{0}} n=n_{t} \wedge \langle n,i\rangle\mathrm{E}\alpha) \rbrace. $
	\end{center}
	By Lemma 3.3, there exist $ (a)_{\xi}\in N_{0} $ and $ (a)_{\zeta}\in N_{0} $ which code $  X $ and $ X' $ respectively. So we can restate statement (2) in the following form:\\
	
	$ (3):\ \ $ Let $ k_{0}\in\mathbb{N} $  be the least natural number, for which
	there exists a set  $\lbrace f, f_{n_{0}},...,f_{n_{k_{0}}}\rbrace $ of  elements of $ \mathcal{F} $, some $ \textbf{z}_{0},(a)_{\zeta},(a)_{\xi}\in N_{0} $ such that:\\

	$\M\models\forall y\leq t(\bar{v},\p) \left( \begin{array}{c}
	\forall \epsilon<(a)_{\xi}( \epsilon\mathrm{E}(a)_{\xi}\rightarrow[f(\bar{v},y,\epsilon,\textbf{z}_{0})\downarrow]^{<b} \rightarrow  
	\\\exists  \varepsilon<\mathrm{E}(a)_{\zeta} \bigvee_{t=0}^{k_{0}}
	\left(\begin{array}{c}
	\langle n_{t},\varepsilon\rangle\mathrm{E}(a)_{\zeta} \ \wedge	[f_{n_{t}}(\bar{v},y,\varepsilon)\downarrow]^{<b} \wedge \\
	
	f_{n_{t}}(\bar{u},m,\varepsilon)= f_{n_{t}}(\bar{v},y,\varepsilon)

	\end{array}\right)
	\end{array}\right)  $.

	Now, by considering the sequence number of ${ \langle f_{n_{t}}(\bar{u},m,\varepsilon): \ \langle n_{t},\varepsilon\rangle\E(a)_{\zeta} \rangle} $ in $ \mathcal{M} $, we may quntify out $ f_{n_{t}}(\bar{u},m,\varepsilon) $s  from the formula in statement $ (3) $, and deduce that:\\
	
	$ (4): \ \ $ $ \mathcal{M}\models \exists x 
	\forall y\leq t(\bar{v},\p) \ \theta(y,b,\bar{v},x,(a)_{\xi},(a)_{\zeta},\textbf{z}_{0}) $, where $\theta(y,b,\bar{v},x,(a)_{\xi},(a)_{\zeta},\textbf{z}_{0})  $ is the following $ \Delta_{0} $-formula: 
	
	$$	\left( \begin{array}{c}
			\forall \epsilon<(a)_{\xi}( \epsilon\mathrm{E}(a)_{\xi}\rightarrow[f(\bar{v},y,\epsilon,\textbf{z}_{0})\downarrow]^{<b}) \rightarrow  
			\\ \exists \langle  n_{t},\varepsilon\rangle\mathrm{E}(a)_{\zeta}
			\left(\begin{array}{c} [f_{n_{t}}(\bar{v},y,\varepsilon)\downarrow]^{<b}\wedge (x)_{\langle n_{t},\varepsilon\rangle}= f_{n_{t}}(\bar{v},y,\varepsilon)
			\end{array}\right)
			\end{array}\right).$$

	Then, we will define  $ \Sigma_{1}$-definable  partial functions $ b(\diamondsuit,y,(a)_{\xi},(a)_{\zeta},\textbf{z}_{0}) $ and $s(\diamondsuit,(a)_{\xi},(a)_{\zeta},\textbf{z}_{0},\p) $, as follows (we omit the parameters $(a)_{\xi}  $, $ (a)_{\zeta} $, $ \p $, and $ \textbf{z}_{0} $ in the presentations of these functions for the sake of simplicity):
	\begin{itemize}

		\item	$b(\diamondsuit,y):=\min\left\lbrace w: \left(\begin{array}{c}
		\forall  \epsilon\mathrm{E}(a)_{\xi} \ ([f(\diamondsuit,y,\epsilon,\textbf{z}_{0})\downarrow]^{<w})\wedge \\ 
	\forall \langle  n_{t},\varepsilon\rangle\E(a)_{\zeta} \ ([f_{n_{t}}(\diamondsuit,y,\varepsilon)\downarrow]^{<w})
	
		\end{array}\right)  \right\rbrace. $
	\item	$ s(\diamondsuit):=x  $ iff $ \exists z  \left(\begin{array}{c}(z)_{0}=x \ \wedge\\ z=\mu_{w} \ \forall y\leq t(\diamondsuit,\p) \ \left(\begin{array}{c}[b(\diamondsuit,y)\downarrow]^{<(w)_{1}} \ \rightarrow\\ \theta(y,b(\diamondsuit,y),\diamondsuit,(w)_{0},(a)_{\xi},(a)_{\zeta},\textbf{z}_{0})\end{array}\right)\end{array}\right) ; $\\
		
		and $ s_{t}(\diamondsuit,\varepsilon):=(s(\diamondsuit))_{\langle  n_{t},\varepsilon\rangle} $, for every $ \langle  n_{t},\varepsilon\rangle\E(a)_{\zeta} $.

	\end{itemize}
	
 From the definition  of $ s_{t}(\bar{v},\varepsilon) $s and statement (4) we may infer that:

	$ (5): \ \ $ $ \mathcal{M}\models  
	\forall y\leq t(\bar{v},\p) \left( \begin{array}{c}
([b(\bar{v},y)\downarrow]^{<b} \ \wedge \	\forall \epsilon<(a)_{\xi}( \epsilon\mathrm{E}(a)_{\xi}\rightarrow[f(\bar{v},y,\epsilon,\textbf{z}_{0})\downarrow]^{<b(\bar{v},y)})) \rightarrow  
	\\ \exists \langle  n_{t},\varepsilon\rangle\mathrm{E}(a)_{\zeta}
	\left(\begin{array}{c} [f_{n_{t}}(\bar{v},y,\varepsilon)\downarrow]^{<b(\bar{v},y)} 
	\wedge [s_{t}(\bar{v},\varepsilon)\downarrow]^{<b(\bar{v},y)} \ \wedge\\
	s_{t}(\bar{v},\varepsilon)= f_{n_{t}}(\bar{v},y,\varepsilon)
	\end{array}\right)
	\end{array}\right)$.\\
	
	It is not difficult to express the formula in the statement (5)  in the form of $ {\forall z<b \ \delta(\bar{v},(a)_{\xi},(a)_{\zeta},\textbf{z}_{0})} $ for some $ \Delta_{0}$-formula $ \delta $. Therefore, by the property  $ \mathrm{P}(\bar{u},\bar{v}) $, the definition of function $ s $, and statement (5)  we deduce that:\\

	$ (6): \ \ $$ \mathcal{M}\models  
 \forall y\leq t(\bar{u},\p) \left( \begin{array}{c}
([b(\bar{u},y)\downarrow] \ \wedge \	\forall \epsilon<(a)_{\xi}( \epsilon\mathrm{E}(a)_{\xi}\rightarrow[f(\bar{u},y,\epsilon,\textbf{z}_{0})\downarrow]^{<b(\bar{u},y)}) )\rightarrow  
	\\ \exists \langle  n_{t},\varepsilon\rangle\mathrm{E}(a)_{\zeta}
	\left(\begin{array}{c} [f_{n_{t}}(\bar{u},y,\varepsilon)\downarrow]^{<b(\bar{u},y)} \wedge
	
	[s_{t}(\bar{u},\varepsilon)\downarrow]^{<b(\bar{u},y)} ) \ \wedge\\ s_{t}(\bar{u},\varepsilon)= f_{n_{t}}(\bar{u},y,\varepsilon)
	\end{array}\right)
	\end{array}\right)$.\\
	
	Now, we will simultaneously define  two more $ \Sigma_{1}$-definable functions in $ \M $: \\
	
	$ \langle o(\diamondsuit,y),h(\diamondsuit,y)\rangle:=\min\left\lbrace\langle n_{t},\varepsilon\rangle\E(a)_{\zeta}: 
	\left(\begin{array}{c}
	[b(\diamondsuit,y)\downarrow]\wedge\\
	
 [f_{n_{t}}(\diamondsuit,y,\varepsilon)\downarrow]^{<b(\diamondsuit,y)} \ \wedge
	
	[s_{t}(\diamondsuit,\varepsilon)\downarrow]^{<b(\diamondsuit,y)} \wedge\\ s_{t}(\diamondsuit,\varepsilon)= f_{n_{t}}(\diamondsuit,y,\varepsilon)\end{array}\right) 
	\right\rbrace $.\\
	
	(Note that, similar to the way we defined function $ s $, we can express the above definition by a $ \Sigma_{1}$-formula.) Then, by statement (5)  it holds that:\\
	
	$ (7):\ \ $ $ \M\models\forall y\leq t(\bar{v},\p)\left(\begin{array}{c} ([b(\bar{v},y)\downarrow]^{<b} \ \wedge \ \forall \epsilon<(a)_{\xi}( \epsilon\mathrm{E}(a)_{\xi}\rightarrow[f(\bar{v},y,\epsilon,\textbf{z}_{0})\downarrow]^{<b})) \rightarrow \\ 
	
	[\langle o(\bar{v},y),h(\bar{v},y)\rangle\downarrow]\end{array}\right)  $.\\
	
	Similarly, from statement (6) we may deduce that:\\
	
	$ (8):\ \ $ $ \M\models\forall y\leq t(\bar{u},\p)\left(\begin{array}{c} ([b(\bar{u},y)\downarrow] \ \wedge \ \forall \epsilon<(a)_{\xi}( \epsilon\mathrm{E}(a)_{\xi}\rightarrow[f(\bar{u},y,\epsilon,\textbf{z}_{0})\downarrow]^{<b(\bar{u},y)})) \rightarrow \\ 
	
	[\langle o(\bar{u},y),h(\bar{u},y)\rangle\downarrow]\end{array}\right)  $.\\

	Finally, we obtain a contradiction by dividing $ k_{0} $ into two cases in the following way:
	\begin{itemize}
		\item \underline{If  $ k_{0}=1 $}, we inductively define the following $ \Sigma_{1}$-function in $ \M $: \\
		
		$ w(\diamondsuit,0):=\min\left\lbrace y\leq t(\diamondsuit,\p): \  \left(\begin{array}{c}[b(\diamondsuit,y)\downarrow]\ \wedge \ 	[h(\diamondsuit,y)\downarrow]^{<(a)_{\zeta}} \wedge \\ \forall \epsilon<(a)_{\xi}( \epsilon\mathrm{E}(a)_{\xi}\rightarrow[f(\diamondsuit,y,\epsilon,\textbf{z}_{0})\downarrow]^{<b(\diamondsuit,y)} )
		\end{array}\right) \right\rbrace $,\\ and \\
		
		$ w(\diamondsuit,i+1):=$  $\min\left\lbrace y\leq t(\diamondsuit,\p): \varphi(\diamondsuit,i,y,(a)_{\zeta},(a)_{\xi},\textbf{z}_{0})\right\rbrace $, where $ \varphi(\diamondsuit,i,y,(a)_{\zeta},(a)_{\xi},\textbf{z}_{0}) $ is the following formula:\\
		
		$ \left(\begin{array}{c}  [b(\diamondsuit,y)\downarrow]\ \wedge [h(\diamondsuit,y)\downarrow]^{<(a)_{\zeta}} \wedge \\ \forall \epsilon<(a)_{\xi}( \epsilon\mathrm{E}(a)_{\xi}\rightarrow[f(\diamondsuit,y,\epsilon,\textbf{z}_{0})\downarrow]^{<b(\diamondsuit,y)}) \ \wedge \\
		\forall x\leq i  \left(\begin{array}{c}
		\left(\begin{array}{c}
		
		[h(\diamondsuit,w(\diamondsuit,x))\downarrow]^{<(a)_{\zeta}}\wedge\\	
		
		[f_{n_{0}}(\diamondsuit,y,h(\diamondsuit,w(\diamondsuit,x)))\downarrow]^{<b(\diamondsuit,y)}\wedge\\
		
		[f_{n_{0}}(\diamondsuit,w(\diamondsuit,x),h(\diamondsuit,w(\diamondsuit,x)))\downarrow]^{<b(\diamondsuit,w(\diamondsuit,x))}\end{array}\right) \rightarrow \\
		
		f_{n_{0}}(\diamondsuit,y,h(\diamondsuit,w(\diamondsuit,x)))\neq f_{n_{0}}(\diamondsuit,w(\diamondsuit,x),h(\diamondsuit,w(\diamondsuit,x))) 
		\end{array}\right)\end{array}\right).$\\ 
		
			First, we will show that  $ \M\models[w(\bar{u},i)\downarrow] $ for all $ i\in I $.  Otherwise, there exists the least $ 0<i_{0}\in I $ such that:\\
			
				$ (9): \ \ $$ \M\models \forall y\leq t(\bar{u},\p) \ \neg \varphi(\bar{u},i_{0},y,(a)_{\zeta},(a)_{\xi},\textbf{z}_{0})$.\\
			
		Note that  by the definition of $ (a)_{\xi} $ and $ (a)_{\zeta} $ it holds that:\\
		
			$ (10):\ \ $ $ \M\models ([b(\bar{u},m)\downarrow] \ \wedge \ \forall \epsilon<(a)_{\xi}( \epsilon\mathrm{E}(a)_{\xi}\rightarrow[f(\bar{u},m,\epsilon,\textbf{z}_{0})\downarrow]^{<b(\bar{u},m)})    $.\\

		 So by statements (8), (9) and (10), there exists some $ i_{1}< i_{0} $ such that:\\
			
			$ (11): \ \ $$ \M\models  
			\left(\begin{array}{c}
			[h(\bar{u},w(\bar{u},i_{1}))\downarrow] \wedge [f_{n_{0}}(\bar{u},m,h(\bar{u},w(\bar{u},i_{1})))\downarrow] \wedge \\
			
			[f_{n_{0}}(\bar{u},w(\bar{u},i_{1}),h(\bar{u},w(\bar{u},i_{1})))\downarrow] \wedge \\
			f_{n_{0}}(\bar{u},m,h(\bar{u},w(\bar{u},
i_{1})))= f_{n_{0}}(\bar{u},w(\bar{u},i_{1}),h(\bar{u},w(\bar{u},i_{1}))) 
			\end{array}\right)  $.\\
			
			Clearly, $f_{n_{0}}(\bar{u},w(\bar{u},i_{1}),h(\bar{u},w(\bar{u},i_{1})))\in\mathrm{K}^{1}(\M;N_{0}\cup\{\bar{u}\})  $. So by statement (11), $f_{n_{0}}(\bar{u},m,h(\bar{u},w(\bar{u},i_{1}))) \in\mathrm{K}^{1}(\M;\mm_{0}\cup\{\bar{u}\}) $. So $ \M\models\neg\langle n_{0},h(\bar{u},w(\bar{u},i_{1}))\rangle\E(a)_{\zeta} $ (by the definition of $ (a)_{\zeta} $), which is in contradiction with the definition of the function $ h $.
			
				As a result,  by the definition of $ w(\bar{u},i) $ and statement (8), the function $ i\mapsto h(\bar{u},w(\bar{u},i)) $ from $\{i: \ i\leq s_{0}+1\} $ into $ ((a)_{\zeta})_{\E} $ is well-defined and coded in $ \M $. So, since the cardinality of $ (a)_{\zeta} $ is less than $ s_{0}+1 $, by $ \Sigma_{1}$-Pigeonhole Principle in $ \M $, there exists some distinct $ i_{0}<i_{1}\leq s_{0}+1 $ such that:\\
			
			$ (12): \ \ $ $ \M\models h(\bar{u},w(\bar{u},i_{0}))=h(\bar{u},w(\bar{u},i_{1}))  $.\\
			
			Therefore, by statement (12) and the definition of $ h $ we conclude that: \\
			
			$ (13): \ \ $ $ \M\models\left(\begin{array}{c} [s_{0}(\bar{u},h(\bar{u},w(\bar{u},i_{0})))\downarrow] \wedge [s_{0}(\bar{u},h(\bar{u},w(\bar{u},i_{1})))\downarrow]\wedge\\ s_{0}(\bar{u},h(\bar{u},w(\bar{u},i_{0})))=s_{0}(\bar{u},h(\bar{u},w(\bar{u},i_{1})))	\end{array}\right) $.\\

			Moreover, by the definition of $ h(\bar{u},w(\bar{u},i)) $, for   $ i=i_{0},i_{1} $ it holds that:\\
			
				$ (14): \ \ $  $ \M\models\left(\begin{array}{c}
			[f_{n_{0}}(\bar{u},w(\bar{u},i),h(\bar{u},w(\bar{u},i)))\downarrow] \ \wedge[s_{0}(\bar{u},h(\bar{u},w(\bar{u})))\downarrow] \wedge\\ f_{n_{0}}(\bar{u},w(\bar{u},i),h(\bar{u},w(\bar{u},i)))=s_{0}(\bar{u},h(\bar{u},w(\bar{u}))) 
			\end{array}\right)$.\\
			
			So statements (12), (13) and (14) imply that:\\
			
			$ (15): \ \ $ $ \M\models f_{n_{0}}(\bar{u},w(\bar{u},i_{1}),h(\bar{u},w(\bar{u},i_{0})))=f_{n_{0}}(\bar{u},w(\bar{u},i_{0}),h(\bar{u},w(\bar{u},i_{0}))) $. \\
			
			But statement (15) is in contradiction with the definition of $ w(\bar{u},i_{1}) $.

		\item \underline{If   $ k_{0}>1 $}, by using Lemma 3.3, let $ (a)_{\rho}\in N_{0} $ be the code of the following subset of  $ N_{0} $:\\
		
		$ \A:=\left\lbrace\langle o(\bar{v},y),h(\bar{v},y)\rangle: \  \M\models \left(\begin{array}{c}  y\leq t(\bar{v},\p)  \wedge	[\langle o(\bar{v},y),h(\bar{v},y)\rangle\downarrow]^{<b} \wedge\\ \forall \epsilon<(a)_{\xi}( \epsilon\mathrm{E}(a)_{\xi}\rightarrow[f(\bar{v},y,\epsilon,\textbf{z}_{0})\downarrow]^{<b} \wedge\\
		\exists \varepsilon<(a)_{\zeta} \left(\begin{array}{c} \langle n_{0},\varepsilon\rangle\E(a)_{\zeta} \wedge [f_{n_{0}}(\bar{v},y,\varepsilon)\downarrow]^{<b} \wedge \\
		f_{n_{0}}(\bar{v},y,\varepsilon)=f_{n_{0}}(\bar{u},m,\varepsilon)
		\end{array}\right) \end{array}\right) \right\rbrace $.\\
		
		So, by statements (3), (7), and the definition of  $ (a)_{\rho} $, we conclude that:\\
		
		$ (16): \ \ $$ \mathcal{M}\models \forall y\leq t(\bar{v},\p) \left( \begin{array}{c}
		\forall \epsilon<(a)_{\xi}\left( \begin{array}{c} \epsilon\mathrm{E}(a)_{\xi}\rightarrow[f(\bar{v},y,\epsilon,\textbf{z}_{0})\downarrow]^{<b} \wedge  \\ 

		[\langle o(\bar{v},y),h(\bar{v},y)\rangle\downarrow]^{<b} \wedge\\
		\neg \langle o(\bar{v},y),h(\bar{v},y)\rangle\E(a)_{\rho} \end{array}\right) \rightarrow  
		\\ \exists\varepsilon<\mathrm{E}(a)_{\zeta}\bigvee_{t=1}^{k_{0}}
		
		\left(\begin{array}{c}
		\langle n_{t},\varepsilon\rangle\mathrm{E}(a)_{\zeta}\wedge
		
		[f_{n_{t}}(\bar{v},y,\varepsilon)\downarrow]^{<b}\wedge\\ f_{n_{t}}(\bar{u},m,\varepsilon)= f_{n_{t}}(\bar{v},y,\varepsilon)

		\end{array}\right)
		\end{array}\right)  $.\\
		
		Let $ f'\in\F $ such that:\\
		\begin{center}
			$ f'(\diamondsuit,y,\epsilon,\langle\textbf{z}_{0},(a)_{\rho},(a)_{\zeta},(a)_{\xi}\rangle)=x $ \\iff \\	$ 
			x=f(\diamondsuit,y,\epsilon,\textbf{z}_{0})\ \wedge [\langle o(\diamondsuit,y),h(\diamondsuit,y)\rangle\downarrow] \wedge  \neg \langle o(\diamondsuit,y),h(\diamondsuit,y)\rangle\E(a)_{\rho} 
			$.\\
		\end{center}	
		So by considering $ f' $ instead of $ f $ in  statement (3), statement (16)  leads to contradiction with the minimality of $ k_{0} $.
			\end{itemize}

\textbf{	`Back' stages:} Let $ m'\in M\setminus\{\bar{v}\} $ such that  $ m'<v_{0}:=\max\{\bar{v}\} $, and $ u_{0}:=\max\{\bar{u}\} $. In order to find some element of $ \mm $ whose image is $ m' $, we modify the proof of the `forth' stage  in the following way:
	\begin{itemize}

		\item Let $ \alpha' $ be the code of the following set in $ \mathcal{M} $:\\
		
		$ {C':=\lbrace\langle r,i\rangle\in I:  \ \mathcal{M}\models[f_{r}(\bar{v},m',(a)_{i})\downarrow]^{<b} \ \text{and} \ f_{r}(\bar{v},m',(a)_{i})\notin\mathrm{K}^{1}(\M; N_{0}\cup\{\bar{v}\})\rbrace} $.
		\item Replace $ p_{s}(y) $ by:
		\begin{center}
			$ q_{s}(x):= \lbrace x<u_{0}\rbrace\cup q_{s1}(x)\cup q_{s2}(x) $; where: \\ 
			\vspace{.5cm}
			${q_{s1}(x):= \lbrace\forall i<s(\neg[f(\bar{v},m',(a)_{i})\downarrow]^{<b}\rightarrow\neg[f(\bar{u},x,(a)_{i})\downarrow] ): f\in\mathcal{F}\rbrace;} $ and\\ 
			${q_{s2}(x):=\left\lbrace\begin{array}{c}
				\forall i<s\left(\begin{array}{c}  
				\left(\begin{array}{c}\langle n,i\rangle\mathrm{E}\alpha'\wedge\\ 
				
				[f_{n}(\bar{u},x,(a)_{i})\downarrow]\end{array}\right)\rightarrow \\ f_{n}(\bar{u},x,(a)_{i})\neq f_{n}(\bar{v},m',(a)_{i})
				\end{array}\right): 
				\ n\in\mathbb{N}	
				\end{array}\right\rbrace} $.
		\end{center}
		\item Let: \\
		$ (\ast'_{k}): \ \ \left(
		\begin{array}{c}
		\text{For every }   f\in\mathcal{F}, \text{ every } z\in N_{0}, \\ \text{ and any nonempty finite set }  \lbrace f_{n_{0}},...,f_{n_{k}}\rbrace  \text{ of elements of  } \mathcal{F}, \\ \text{ there exists some } s> I \text{ such that:}\\
		\vspace{.3cm}
		{ \mathcal{M}\models \exists x<u_{0} \left(\begin{array}{c}
			\forall i<s(\neg[f(\bar{v},m',(a)_{i},z)\downarrow]^{<b}\rightarrow\neg[f(\bar{u},x,(a)_{i},z)\downarrow]) \ \wedge  \\\forall i<s \bigwedge_{t\leq k}\left(\begin{array}{c}
			(\langle n_{t},i\rangle\mathrm{E}\alpha'\wedge[f_{n_{t}}(\bar{u},x,(a)_{i})\downarrow]) \rightarrow \\ f_{n_{t}}(\bar{u},x,(a)_{i})\neq f_{n_{t}}(\bar{v},m',(a)_{i})
			\end{array}\right)
			\end{array}\right) }  
		\end{array}\right)$.
		\item Replace $ \Theta(s,i,\bar{u},m,\bar{v},b,a,\alpha,\beta) $ with $  \Theta'(s,i,\bar{u},\bar{v},m',b,a,\alpha',\beta') $:
		\begin{center}
			$\forall r<i \ \exists x<u_{0}\left(\begin{array}{c}
			\forall w<s( \langle x,r,w\rangle\mathrm{E}\beta'\rightarrow[f_{r}(\bar{v},m',(a)_{w})\downarrow]^{<b})  \wedge \\
			\forall w<s \forall r'<i
			\left(\begin{array}{c}
			(\langle r',w\rangle\mathrm{E}\alpha'\wedge\langle x,r',w\rangle\mathrm{E}\beta')\rightarrow\\ f_{r'}(\bar{u},m,(a)_{w})\neq f_{r'}(\bar{v},y,(a)_{w}))
			\end{array}\right)
			\end{array}\right)  $;
	\end{center}	
			where $ \beta' $ is the code of the following $ \Sigma_{1}$-definable set in $ \mathcal{M} $:\\
			
			$ L':=\lbrace\langle x,r,w\rangle: \ \mathcal{M}\models (x<u_{0} \wedge w<d \wedge r<d \wedge [f_{r}(\bar{u},x,(a)_{w})\downarrow]) \rbrace $.
		
		\item Between statements (3) and (4) we need to use $ \Sigma_{1}$-Collection to deduce:\\
		
		$ (3'): \ \ $ $ \mathcal{M}\models \exists w \forall x<u_{0} \left(\begin{array}{c}
		\forall \epsilon<(a)_{\lambda} \ ([f(\bar{u},x,\epsilon,\textbf{z}_{0})\downarrow]^{<w}\rightarrow \epsilon\mathrm{E}(a)_{\lambda})\rightarrow \\ \exists \varepsilon<(a)_{\eta} \bigvee_{t\leq k_{1}}\left(\begin{array}{c}
		\langle n_{t},\varepsilon\rangle\mathrm{E}(a)_{\eta}\wedge [f_{n_{t}}(\bar{u},x,\varepsilon)\downarrow]^{<w}\wedge\\ f_{n_{t}}(\bar{u},x,\varepsilon)= f_{n_{t}}(\bar{v},m',\varepsilon)
		\end{array}\right)
		\end{array}\right)  $;\\
		
		in which $(a)_{\lambda}\in N_{0} $ and $ (a)_{\eta}\in N_{0} $	code the following $ Y $ and $ Y' $ respectively:\\
		
		${ Y:=\lbrace x\in M: \ \mathcal{M}\models\exists i<s_{0}(x=(a)_{i}\wedge[f(\bar{v},m',(a)_{i},\textbf{z}_{0})\downarrow]^{<b} ) \rbrace} $,  and \\
		$ {Y':=\lbrace \langle n,x\rangle\in M: \ \mathcal{M}\models\exists i<s_{0}(x=(a)_{i}\wedge \bigvee_{t=0}^{k_{0}}n=n_{t} \wedge \langle n,i\rangle\mathrm{E}\alpha') \rbrace} $. 
	\end{itemize}
	The rest of the argument goes smoothly by modifying  the `forth' stage  according to the above changes, and this completes the proof.
\end{proof}

\begin{cor}
	Assume $ \mathcal{M}\models \mathrm{I}\Sigma_{1} $ is countable and nonstandard, $  I $ is a proper cut of $ \mathcal{M} $, and $ \mathcal{M}_{0} $ is an $  I$-small $ \Sigma_{1} $-elementary submodel of  $ \mathcal{M} $. Then the following are equivalent:
	\begin{itemize}
		\item[1)] $  I $ is strong in $ \mathcal{M} $.
		\item[2)] There exists some proper initial self-embedding $ j $ of $ \mathcal{M} $ such that $ { M_{0}=\mathrm{Fix}(j)} $.
	\end{itemize}
\end{cor}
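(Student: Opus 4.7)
I would prove the two implications separately. The forward direction $(1) \Rightarrow (2)$ combines Theorem 3.4 with an extension argument, while the reverse $(2) \Rightarrow (1)$ is handled by contraposition via an $I$-analog of Lemma 2.6.

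For $(1) \Rightarrow (2)$, split by whether $M_0 = I$. If $M_0 = I$, then $\mathcal{M}_0 \prec_{\Sigma_1} \mathcal{M}$ yields $I \prec_{\Sigma_1} \mathcal{M}$, so Theorem 2.4(2) directly produces the required $j$ with $\mathrm{Fix}(j) = I = M_0$. If $M_0 \neq I$, apply Theorem 3.4 (with $\mathcal{N} = \mathcal{M}$ and $\mathcal{N}_0 = \mathcal{M}_0$) to obtain a proper initial self-embedding $j_0$ of $\mathcal{M}_1 := \mathrm{H}^1(\mathcal{M}; M_0)$ satisfying $\mathrm{Fix}(j_0) = M_0$. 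If $\mathcal{M}_1 = \mathcal{M}$ we are done; otherwise $\mathcal{M}_1$ is a proper $\Sigma_1$-elementary cut of $\mathcal{M}$ and $j_0$ must be extended to $\mathcal{M}$. Fix $b \in \mathcal{M}_1 \setminus j_0(\mathcal{M}_1)$ and build $j$ by a back-and-forth over an enumeration of $\mathcal{M}$ enforcing $j|_{\mathcal{M}_1} = j_0$ and $j(\mathcal{M}) < b$. Then $\mathrm{Fix}(j) = M_0$ is automatic: for $x \in \mathcal{M} \setminus \mathcal{M}_1$ one has $j(x) < b < x$, so $x \notin \mathrm{Fix}(j)$; meanwhile the fixed points inside $\mathcal{M}_1$ are precisely those of $j_0$, namely $M_0$.

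The principal obstacle in $(1) \Rightarrow (2)$ is executing this back-and-forth while simultaneously preserving agreement with $j_0$ on $\mathcal{M}_1$ and the image bound $b$. I would adapt the template of the proof of Theorem 3.4 directly to $\mathcal{M}$: at each forth stage, if the new element $m$ lies in $\mathcal{M}_1$ we set $j(m) := j_0(m)$, and if $m \in \mathcal{M} \setminus \mathcal{M}_1$ we select $j(m) \in (j_0(\mathcal{M}_1), b)$ by invoking Theorem 2.3 together with Remark 1 on the current finite approximation, enforcing the appropriate $\Sigma_1$-type preservation; the back stages are handled symmetrically to ensure $j(\mathcal{M})$ is an initial segment below $b$.

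For $(2) \Rightarrow (1)$, I would argue by contraposition: assume $I$ is not strong in $\mathcal{M}$ and aim to show no proper initial self-embedding $j$ of $\mathcal{M}$ can have $\mathrm{Fix}(j) = M_0$. The central ingredient is an $I$-analog of Lemma 2.6 asserting that when $I$ is not strong, the fixed points of any self-embedding of $\mathcal{M}$ accumulate downward onto $I$: for every $y > I$ there is a fixed point $z$ with $I < z < y$. Combined with the $I$-small parametrization $M_0 = \{(a)_i : i \in I\}$ and with $M_0 \prec_{\Sigma_1} \mathcal{M}$, a $\Sigma_1$-Overspill argument on the parameter $a$ applied to the clustering of the $(a)_i$ just above $I$ should yield a $\Sigma_1$-definable cofinal subset of $I$ in $\mathcal{M}$, contradicting the fact that $I$ is a proper cut. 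The main obstacle here is establishing the $I$-analog of Lemma 2.6, which I would obtain by relativizing the $\mathbb{N}$-argument of \cite{our}, replacing the $\Sigma_1$-functions by coded functions from $\mathrm{SSy}_I(\mathcal{M})$ as test witnesses.
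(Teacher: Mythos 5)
Your overall architecture (split on $M_0 = I$, invoke Theorem 3.4 when $M_0 \neq I$, then lift the embedding from $\mathrm{H}^1(\M;M_0)$ to $\M$; and for the converse derive a contradiction from $\neg$strongness) matches the paper, but both of your lifting and contraposition steps diverge from the paper's and are not obviously salvageable as written.

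For $(1)\Rightarrow(2)$, you propose to extend $j_0$ from $\M_1:=\mathrm{H}^1(\M;M_0)$ to $\M$ by a direct back-and-forth in which $j$ is required to agree with $j_0$ on \emph{all} of $\M_1$ while $j(\M)<b$. The invariant this imposes is not a finite one: every `forth' commitment for $m>\M_1$ must respect the $\Sigma_1$-type of $m$ over the \emph{entire} cut $\M_1$ relative to $j_0$, which is precisely the kind of extendability statement that Section 5 (Theorem 5.3/5.4) is designed to address and which needs the hypothesis that $j_0$ preserves coded subsets. Theorem 2.3 and Remark 1, which you cite, only guarantee preservation on $I$ and on the coded family $\{(a)_i\}$, not agreement with an arbitrary self-embedding $j_0$ on the whole cut $\M_1$. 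The paper avoids this entirely by conjugation: it picks $b\in\M_1\setminus M_0$ with $h(\M_1)<b$, uses strong $\Sigma_1$-Collection to bound $\mathrm{K}^1(\M;M_0\cup\{b\})$ by some $d$, then invokes Theorems 2.1, 2.3 and Remark 1 to get a proper initial embedding $k:\M\hookrightarrow\M_1$ with $M_0\subseteq\mathrm{Fix}(k)$, $k(M)<d$, and $b\in k(M)$, and sets $j:=k^{-1}hk$. Then $hk(\M)\subseteq h(\M_1)<b\subseteq k(\M)$ makes $j$ well-defined, and $j(x)=x$ iff $k(x)\in\mathrm{Fix}(h)=M_0$ iff $x\in M_0$, since $k$ is injective and fixes $M_0$. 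This one-step trick replaces your infinite-commitment extension and needs no new machinery.

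For $(2)\Rightarrow(1)$ the gap is more serious. You aim for a contradiction of the form ``$I$ has a $\Sigma_1$-definable cofinal subset,'' reached via an $I$-analog of Lemma 2.5 plus an unspecified $\Sigma_1$-Overspill argument ``on the clustering of the $(a)_i$ just above $I$.'' Neither step is established: Lemma 2.5 is proved for $\mathbb{N}$ and its proof leans on the fact that $j$ fixes $\mathbb{N}$ pointwise; in the contraposition you cannot assume $I\subseteq\mathrm{Fix}(j)$ (that inclusion is Theorem 3.2(2), whose hypothesis is exactly the strongness you have discarded). And even granting downward accumulation of fixed points onto $I$, it is not explained how one extracts a $\Sigma_1$-definable cofinal subset of $I$ from the parameter $a$. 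The paper's contradiction is of a different shape: from $\neg$strongness it takes a coded $f$ with $D=\{f(i):i\in I, f(i)>I\}$ downward cofinal in $M\setminus I$, fixes $b\in M\setminus M_0$, defines $h(k)$ realizing the $\Delta_0$-type of $b$ over $\{(a)_i\}_{i\in I}$ coded by $s_k$, proves $h(i)\in M_0=\mathrm{Fix}(j)$ for $i\in I$, sets $h':=j(h)$, overspills the identity $h(f(u))=h'(g(u))$ to some $s>I$, picks $i_0\in I$ with $I<f(i_0)<s$, and shows $c:=h(f(i_0))$ is a fixed point with $c\notin M_0$. That explicit witness is what closes the argument; your proposal does not produce one.
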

\begin{proof} Suppose ${  M_{0}=\lbrace (a)_{i}: \ i\in I\rbrace} $, for some $ a\in M $ such that $ (a)_{i}\neq(a)_{j} $ for all distinct $ i, j\in I $.\\
	
	\textbf{$ (1)\Rightarrow(2) $:} If $ \mm_{0}=I $, then by Theorem 2.4(2), we are done. So suppose $ I\subsetneq\mm_{0} $. First, by using Theorem 3.4 let $ h  $ be  some proper initial self-embedding  of $ \mathrm{H}^{1}(\M;\mm_{0}) $ such that $ \fix(h)=\mm_{0} $. Moreover, fix some $b\in \mathrm{H}^{1}(\M;\mm_{0})\setminus\mm_{0}  $ such that $ h( \mathrm{H}^{1}(\M;\mm_{0}))<b $. Now, by using strong  $ \Sigma_{1}$-Collection in $ \mathrm{H}^{1}(\M;\mm_{0}) $, and since $ \mathrm{H}^{1}(\M;\mm_{0})\prec_{\Sigma_{1}}\M $,  we can find some $ d\in\mathrm{H}^{1}(\M;\mm_{0}) $ such that:
	\begin{center}
		$ \mathcal{M}\models [f((a)_{i},b)\downarrow]\rightarrow[f((a)_{i},b)\downarrow]^{<d} $, for all $ f\in\mathcal{F} $ and all $ i\in I $.
	\end{center}

Therefore, by Theorems 2.1 and 2.3 and Remark 1, there exists some proper initial embedding $ k:\M\hookrightarrow\mathrm{H}^{1}(\M;\mm_{0}) $ such that $ \mm_{0}\subseteq\fix(k) $, $ k(\mm)<d $ and $ b\in k(M) $ (note that since $ \mathrm{H}^{1}(\M;\mm_{0}) $ is an initial segment of $ \M $, then $ \ssy_{I}(\M)=\ssy_{I}(\mathrm{H}^{1}(\M;\mm_{0})) $). Finally, we put $ j:=k^{-1}hk $. It is easy to check that $ j $ is a well-defined proper initial self-embedding of $ \M $ such that $ \fix(j)=\mm_{0} $.

	\textbf{$ (2)\Rightarrow(1) $:} We combine the methods used in the proof of Theorem 5.1 and 6.1 of  \cite{our}. Suppose $ I $ is  not strong; i.e. there exists some coded function $ f $ in $ \mathcal{M} $ such that $  I\subseteq\mathrm{Dom}(f) $, and the set $ {D:=\lbrace f(i): \ i\in I \wedge  I<f(i)\rbrace} $ is downward cofinal in $  M\setminus I $.\\
	Let $ b\in M\setminus M_{0} $ and $ g:=j(f) $. For every $ k\in M $, we put:
	\begin{center}
		$  A_{k}:=\lbrace\langle r,y\rangle<k: \ \mathcal{M}\models\mathrm{Sat}_{\Delta_{0}}(\delta_{r}((a)_{y},b))\rbrace $.
	\end{center}
	Since $  A_{k} $ is bounded and $ \Delta_{1}$-definable, it is coded by some $ s_{k} $ in $ \mathcal{M} $. Moreover, the function $ k\mapsto s_{k} $ is $ \Delta_{1}$-definable in $   \M $. Now, we define:
	\begin{center}
		
		$ h(k):=\mu_{x} \ (\forall\langle r,y\rangle<k \ (\langle r,y\rangle\mathrm{E}s_{k}\rightarrow\mathrm{Sat}_{\Delta_{0}}(\delta_{r}((a)_{y},x)))) $.
	\end{center}
	So note that:
	\begin{itemize}
		\item[(I)] For every $ k> I $, we have $ \mathrm{Th}_{\Delta_{0}}(\mathcal{M};b,\{(a)_{i}\}_{i\in I})\subseteq \mathrm{Th}_{\Delta_{0}}(\mathcal{M};h(k),\{(a)_{i}\}_{i\in I})$.
		\item[(II)] For every $ i\in I $, $ h(i)$ is well-defined and inside $ M_{0}=\mathrm{Fix}(j) $; the reason behind this statement is that  for every $ i\in I $  we consider the following set:
		\begin{center}
			$  B_{i}:=\lbrace\langle r,\epsilon\rangle: \ \mathcal{M}\models \exists y<i ((a)_{y}=\epsilon\wedge \langle r,y\rangle\mathrm{E}s_{i})\rbrace $.
		\end{center}
	Then, 	  by Lemma 3.3, $ B_{i} $ is coded by some $ \alpha_{i}\in M_{0}=\mathrm{Fix}(j) $. So it holds that:
		\begin{center}
			$ \mathcal{M}\models h(i)=\mu_{x}(\forall \langle r,\epsilon\rangle\mathrm{E}\alpha_{i} \ (\mathrm{Sat}_{\Delta_{0}}(\delta_{r}(\epsilon,x))) $.
		\end{center}
		As a result, since $ \mathcal{M}_{0}\prec_{\Sigma_{1}}\mathcal{M} $, statement (II) holds.
	\end{itemize}
	Now, let $ h':=j(h) $. So for all $ i\in I $, and all $ u<i $ such that $ f(u)<i $, statement (II) implies that: $$ h'(g(u))=j(h)(j(f)(u))=j(h)(j(f)(j(u)))=j(h(f(u)))=h(f(u)). $$ Therefore, for all $ i\in I $,  $ \mathcal{M}\models\varphi(i,f,g,h,h') $, where $ \varphi(i,f,g,h,h') $ is the following $ \Delta_{1}$-formula:
	\begin{center}
		$ \forall u<i \ (f(u)<i\rightarrow h(f(u))=h'(g(u)))$.
	\end{center}
	So by $ \Sigma_{1} $-Overspill in $ \mathcal{M} $, there exists some $ s> I $ such that:
	\begin{center}
		$ (\spadesuit): \ \ $ $ \forall u<s \ (f(u)<s\rightarrow h(f(u))=h'(g(u)))$.
	\end{center}
	Since $ D $ is downward cofinal in $  M\setminus I $, there is some $ i_{0}\in I $ such that $  I<f(i_{0})<s $. Let $ c:=h(f(i_{0})) $. On one hand, by (I), $ \mathrm{Th}_{\Delta_{0}}(\mathcal{M};b,\{(a)_{i}\}_{i\in I})\subseteq \mathrm{Th}_{\Delta_{0}}(\mathcal{M};c,\{(a)_{i}\}_{i\in I})$. As a result, because $ b\notin M_{0} $, we have  $ c\notin M_{0}=\mathrm{Fix}(j) $. On the other hand $ (\spadesuit) $ implies that:
	\begin{center}
		$ j(c)=j(h(f(i_{0})))=j(h)(j(f)((j(i_{0})))=h'(g(i_{0}))=h(f(i_{0}))=c $.
	\end{center}
As a result, $ I $ has to be strong in $ \M $.	

 \end{proof}

%\begin{rem}
%	From  Corollary 3.5, one may conclude part  (3) of Theorem 2.4.
%\end{rem}

\section{Strongness of the standard cut and fixed points}
In this section,  we will show some properties of $ \fix(j) $, when $ \mathbb{N} $ is not strong in $ \M $. Then we will  conclude some criteria for stongness of $ \mathbb{N} $ in a countable nonstandard  model of $ \I\Sigma_{1} $ through the set of fixed points of its initial self-embeddings.

\begin{lem}
	Suppose $ \mathcal{M}$ is a  nonstandard model of $ \mathrm{I}\Sigma_{1} $  in which $ \mathbb{N} $ is not strong. Then for any self-embedding  $ j $ of $ \M $ the following hold:
	\begin{itemize}
%	\item[(1)]$ \fix(j)\models\mathrm{B}\Sigma_{1} $.
	\item[(1)] $ \fix(j) $ is 1-tall.
	\item[(2)] If $ \fix(j)$ is a countable model of $\mathrm{B}\Sigma_{1} $, then it is 1-extendable.
	\end{itemize} 
\end{lem}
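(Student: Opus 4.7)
The plan is to deduce both parts from Lemma 2.5, using strong $\Sigma_{1}$-Collection in $\M$ for (1) and the least number principle for $\Sigma_{1}$-formulas in $\M$ for (2).

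For (1), I would fix $a\in\fix(j)$ and aim to produce some $b\in\fix(j)$ with $\mathrm{K}^{1}(\fix(j);a)<b$. The first step is to apply $\mathrm{B}^{+}\Sigma_{1}$ in $\M$ (available since $\M\models\I\Sigma_{1}$) to the $\Sigma_{1}$-formula expressing ``$f_{n}(a)=y$'' with a nonstandard bound $s^{*}$ on $n$, thereby producing $b^{*}\in M$ such that $\M\models\forall n<s^{*}\,([f_{n}(a)\downarrow]\to[f_{n}(a)\downarrow]^{<b^{*}})$; in particular, $b^{*}$ bounds every $f_{n}(a)$ for $n\in\mathbb{N}$. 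Next, I would invoke Lemma 2.5(2) with $c=b^{*}$ and $m=a$ to obtain $b\in\fix(j)$ with $\Th_{\Sigma_{1}}(\M;b^{*},a)\subseteq\Th_{\Sigma_{1}}(\M;b,a)$. For each standard numeral $n$, the $\Sigma_{1}$-formula $[f_{n}(a)\downarrow]^{<b^{*}}$ (in the variables $b^{*},a$) transfers to $[f_{n}(a)\downarrow]^{<b}$, so $b$ also bounds every $f_{n}(a)$ with $n\in\mathbb{N}$. Finally, since $a\in\fix(j)$ forces $\mathrm{K}^{1}(\M;a)\subseteq\fix(j)$ (because $j$ fixes every least witness of a $\Sigma_{1}$-formula with parameter $a$) and $\fix(j)\prec_{\Sigma_{1}}\M$ gives $\mathrm{K}^{1}(\fix(j);a)=\mathrm{K}^{1}(\M;a)$, it follows that $\mathrm{K}^{1}(\fix(j);a)<b$, so $\fix(j)$ is 1-tall.

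For (2), the plan is to apply Theorem 2.2(2) to $\fix(j)$. We already have $\fix(j)\models\mathrm{B}\Sigma_{1}+\mathrm{Exp}$ from the hypothesis together with the general fact $\fix(j)\models\I\Delta_{0}+\mathrm{Exp}$, and 1-tallness from part (1); it remains to rule out that $\mathbb{N}$ is $\Pi_{1}$-definable in $\fix(j)$ without parameters. Suppose for contradiction that $\pi(x)\in\Pi_{1}$ defines $\mathbb{N}$ in $\fix(j)$. Since $\fix(j)\prec_{\Sigma_{1}}\M$, $\M\models\pi(n)$ for every $n\in\mathbb{N}$. Applying the least number principle for $\Sigma_{1}$-formulas in $\M\models\I\Sigma_{1}$ to $\neg\pi$ yields the least $c_{0}\in M$ with $\M\models\neg\pi(c_{0})$, and $c_{0}$ must be nonstandard.

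The key observation is that $c_{0}\notin\fix(j)$: since $\fix(j)$ is closed under predecessor (from $j(x)=x$ and $j(x-1)+1=j(x)=x$ we infer $j(x-1)=x-1$), $c_{0}\in\fix(j)$ would force $c_{0}-1\in\fix(j)$; but by minimality of $c_{0}$, $\M\models\pi(c_{0}-1)$, hence $\fix(j)\models\pi(c_{0}-1)$ by $\Sigma_{1}$-elementarity, giving $c_{0}-1\in\mathbb{N}$ and contradicting the nonstandardness of $c_{0}$. Now Lemma 2.5(1) supplies $b\in\fix(j)\setminus\mathbb{N}$ with $b\leq c_{0}$; since $c_{0}\notin\fix(j)$, in fact $b<c_{0}$, and because $b$ is nonstandard, $\fix(j)\models\neg\pi(b)$, whence $\M\models\neg\pi(b)$, contradicting the minimality of $c_{0}$. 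The main obstacle is recognizing the predecessor-closure argument that links the $\Pi_{1}$-definability of $\mathbb{N}$ in $\fix(j)$ with the least number principle inside $\M$; once that is in place, Lemma 2.5(1) and Theorem 2.2(2) conclude the proof.
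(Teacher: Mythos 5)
Your proof is correct and follows essentially the same route as the paper: part (1) uses $\mathrm{B}^{+}\Sigma_{1}$ in $\M$ together with Lemma 2.5(2) to produce a bound in $\fix(j)$ on $\mathrm{K}^{1}(\M;a)$, and part (2) reduces to Theorem 2.2(2) after ruling out $\Pi_{1}$-definability of $\mathbb{N}$ in $\fix(j)$ via Lemma 2.5(1). The only stylistic difference is that in part (2) you unfold the $\Sigma_{1}$-Underspill that the paper invokes directly into an explicit least-number-principle argument (with the extra observation that $c_{0}\notin\fix(j)$), reaching the same contradiction by a slightly longer path.
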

\begin{proof}
\begin{itemize}
%\item[(1)]
%	Suppose  $ \fix(j)\models \forall x<a \ \exists y \ \varphi (x,y,m) $, in which  $ \varphi(x,y,z):=\exists w \ \delta(x,y,z,w) $ for some $ \Delta_{0}$-formula $ \delta(x,y,z,w) $, and  $ a,m\in\fix(j) $. By strong $ \Sigma_{1}$-Collection axiom in $ \M $, it holds that:\\
	
%	$ \M\models \exists t \overset{\psi(a,m,t)}{\overbrace{\forall x<a \ (\exists y \ \varphi (x,y,m)\rightarrow  \exists y,z<t \ \delta (x,y,z,m)}}) $.\\
	
%So let $ t_{0}\in\mm $  such  that $ \M\models \psi(a,m,t_{0}) $. Now, by Lemma 2.5(2)  there exists some $ t_{00}\in\fix(j) $ such that $\mathrm{Th}_{\Sigma _{1}}(\mathcal{M};t_{0},a,m)\subseteq \mathrm{Th}_{\Sigma _{1}}(\mathcal{M};t_{00},a,m).$ As a result, since $ \fix(j)\prec_{\Sigma_{1}}\M $ we conclude that  $ \fix(j)\models \forall x<a \ \exists y<t_{00} \ \varphi (x,y,m) $.
	\item[(1)] Let $ a\in\fix(j) $ be arbitrary and fixed. Since $ \fix(j)\prec_{\Sigma_{1}}\M $, it suffices to prove that $ \mathrm{K}^{1}(\M;a) $ is not cofinal in $ \fix(j) $. Since $ \M\models\mathrm{B}^{+}\Sigma_{1} $,  there exists some $ t_{0}\in\mm $ such that $\mathrm{K}^{1}(\M;a)<t_{0}  $.
Moreover, by Lemma 2.5(2) there exists some $ t_{00}\in\fix(j) $ such that  ${\mathrm{Th}_{\Sigma _{1}}(\mathcal{M};t_{0},a)\subseteq \mathrm{Th}_{\Sigma _{1}}(\mathcal{M};t_{00},a)}$. Therefore,   $ \mathrm{K}^{1}(\M;a)<t_{00} $.
 \item [(2)] By Theorem 2.2(2), and part (1) of this lemma, it suffices to prove that $ \mathbb{N} $ is not $ \Pi_{1}$-definable in $ \fix(j) $. Suppose not; i.e. $ \mathbb{N} $ is definable in $ \fix(j) $ by some $ \Pi_{1}$-formula $ \pi(x) $. By Lemma 2.5(1), $ \fix(j)\cap\mm\setminus\mathbb{N} $ is downward cofinal in $ \mm\setminus\mathbb{N} $. So by $ \Sigma_{1}$-Underspill in $ \M $, there exists some $ n\in\mathbb{N} $ such that $ \M\models\neg\pi(n) $, and consequently since $ \fix(j)\prec_{\Sigma_{1}}\M $, $ \fix(j)\models\neg\pi(n) $, which is a contradiction.
\end{itemize}	
\end{proof}

The following corollary  generalizes Theorem 1.2:
\begin{cor}
	Let $ \mathcal{M}\models\mathrm{I}\Sigma_{1} $ be countable and nonstandard in which $ \mathbb{N} $ is not strong,  and $ j  $ is an   initial self-embedding  of $ \mathcal{M} $ such that $ \mathrm{Fix}(j)\models\mathrm{B}\Sigma_{1} $. Then $ \fix(j) $ is isomorphic to a proper  cut of $ \mathcal{M} $.
\end{cor}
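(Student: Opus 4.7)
The plan is to deploy the Dimitracopoulos--Paris embedding theorem (Theorem 2.2(1)) with $\mathrm{Fix}(j)$ as the ``source'' model and $\M$ as the ``target'' model. Once I produce a proper initial embedding $\mathrm{Fix}(j)\hookrightarrow\M$, the image is automatically a proper cut of $\M$ isomorphic to $\mathrm{Fix}(j)$.

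First I would check that all hypotheses of Theorem 2.2(1) are in place. Since $\M\models\I\Sigma_{1}\vdash\mathrm{B}^{+}\Sigma_{1}$, every set $\mathrm{K}^{1}(\M;a)$ is bounded in $\M$, so $\M$ is 1-tall. Next, $\mathrm{Fix}(j)$ is a countable nonstandard model: countability is inherited from $\M$, and by Lemma 2.5(1) the nonstandard fixed points are downward cofinal in $\M\setminus\mathbb{N}$, so $\mathrm{Fix}(j)$ has nonstandard elements. By hypothesis $\mathrm{Fix}(j)\models\mathrm{B}\Sigma_{1}$, and $\mathrm{Fix}(j)\models\I\Delta_{0}+\mathrm{Exp}$ by the general results cited before Theorem~2.3. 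So Lemma~4.1 applies, giving that $\mathrm{Fix}(j)$ is 1-tall and, being a countable $\mathrm{B}\Sigma_{1}$-model, 1-extendable.

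The remaining ingredients are the equality of $\Sigma_{1}$-theories and of standard systems. The theory equality $\Th_{\Sigma_{1}}(\mathrm{Fix}(j))=\Th_{\Sigma_{1}}(\M)$ is immediate from $\mathrm{Fix}(j)\prec_{\Sigma_{1}}\M$. For standard systems, the inclusion $\ssy(\mathrm{Fix}(j))\subseteq\ssy(\M)$ is free since $\mathrm{Fix}(j)\subseteq M$. For the reverse inclusion, take $a\in\mm$ and consider the set $a_{\E}\cap\mathbb{N}$; by Lemma 2.5(2), pick $b\in\mathrm{Fix}(j)$ with $\Th_{\Sigma_{1}}(\M;a)\subseteq\Th_{\Sigma_{1}}(\M;b)$. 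Since for each $n\in\mathbb{N}$ both formulas ``$n\E x$'' and ``$\neg(n\E x)$'' are $\Delta_{0}$ (hence $\Sigma_{1}$) in $x$, the theory inclusion forces $b_{\E}\cap\mathbb{N}=a_{\E}\cap\mathbb{N}$, so every element of $\ssy(\M)$ is witnessed by a parameter in $\mathrm{Fix}(j)$.

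With all hypotheses verified, Theorem 2.2(1) yields a proper initial embedding $\mathrm{Fix}(j)\hookrightarrow\M$, which exhibits $\mathrm{Fix}(j)$ as isomorphic to a proper cut of $\M$. The only delicate step is the transfer of standard system elements from $\M$ into $\mathrm{Fix}(j)$; this is where the failure of strongness of $\mathbb{N}$ does real work, via Lemma 2.5(2). Everything else is a routine bookkeeping of which earlier results apply.
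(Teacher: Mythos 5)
Your proposal is correct and follows essentially the same route as the paper: reduce everything to an application of Theorem 2.2(1), with the 1-tallness and 1-extendability of $\mathrm{Fix}(j)$ supplied by Lemma 4.1 and the standard-system equality supplied by Lemma 2.5(2) via the $\Sigma_1$-type inclusion argument. The paper merely states the conclusion "it is enough to prove $\mathrm{SSy}(\mathrm{Fix}(j))=\mathrm{SSy}(\mathcal{M})$" and then gives exactly your argument for that equality, so your writeup is just a more explicit version of the same proof.
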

\begin{proof}
	By Theorem 2.2(1) and the previous lemma,  it is enough to prove that $ {\mathrm{SSy}(\mathrm{Fix}(j))=\mathrm{SSy}(\mathcal{M})} $. So let $  X=\mathbb{N}\cap a_{\mathrm{E}} $ for some $ a\in M $. Since $ \mathbb{N} $ is not strong in $ \mathcal{M} $, by Lemma 2.5(2) there exists some $ b\in\mathrm{Fix}(j) $ such that $ \mathrm{Th}_{\Sigma_{1}}(\mathcal{M};a)\subseteq\mathrm{Th}_{\Sigma_{1}}(\mathcal{M};b) $. Therefore, $  X=\mathbb{N}\cap b_{\mathrm{E}} $, and this finishes the proof.
\end{proof}

We conclude this section with  a generalization of a similar result about automorphisms of countable recursively saturated models of $ \pa $ in \cite{ks}. Moreover, the following corollary refines Theorem 2.4(3).
\begin{cor}
	Let $ \mathcal{M}\models\mathrm{I}\Sigma_{1} $ be countable and nonstandard. Then the following are equivalent:
	\begin{itemize}
		\item[1)] $ \mathbb{N} $ is strong in $ \mathcal{M} $.
		\item[2)] There exists some proper initial self-embedding $ j $ of $ \mathcal{M} $ such that $ \mathrm{Fix}(j)=\mathrm{K}^{1}(\mathcal{M}). $
		\item[3)] There exists some proper initial self-embedding $ j $ of $ \mathcal{M} $, and some small $ \mathcal{M}_{0}\prec_{\Sigma_{1}}\mathcal{M} $, such that $ \mathrm{Fix}(j)= M_{0}. $
		\item[4)] For every small $ \mathcal{M}_{0}\prec_{\Sigma_{1}}\mathcal{M} $ there exists some proper initial self-embedding $ j $ of $ \mathcal{M} $ such that $ \mathrm{Fix}(j)= M_{0}. $
		\item[5)] There exists some proper initial self-embedding $ j $ of $ \mathcal{M} $ such that $ \mathrm{Fix}(j)\subseteq\mathrm{I}^{1}(\mathcal{M}) $.
		\end{itemize}
	If $ \M\models \pa $ and it is  recursively saturated, then the above statements are equivalent to the following:
	\begin{itemize}
		\item[6)] There exists some proper initial self-embedding $ j $ of $ \mathcal{M} $ such that $ \mathrm{Fix}(j)\models\mathrm{B}\Sigma_{1} $ and it  is  isomorphic to no proper initial segments of $ \mathcal{M} $.
	\end{itemize} 
\end{cor}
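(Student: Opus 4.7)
The plan is to derive the equivalences $(1)$--$(5)$ for arbitrary countable nonstandard $\M\models\I\Sigma_1$ directly from prior results of the paper, and then treat $(6)$ separately under the additional $\pa$ and recursive-saturation hypotheses. First, $(1)\Leftrightarrow(2)$ is Theorem 2.4(3) verbatim. For $(1)\Leftrightarrow(4)$ I apply Corollary 4.1 with $I:=\mathbb{N}$: since $\mathbb{N}$ is a proper cut and small (i.e., $\mathbb{N}$-small) $\Sigma_1$-elementary submodels exist---for instance $\mathrm{K}^1(\M)$ is small by the remark following Definition 1 and $\Sigma_1$-elementary by Theorem 2.1(1)---the corollary states exactly that $\mathbb{N}$ is strong iff every such $\M_0$ equals $\fix(j)$ for some proper initial self-embedding $j$ of $\M$, which is $(4)$. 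Implication $(4)\Rightarrow(3)$ is immediate by taking $\M_0:=\mathrm{K}^1(\M)$, and $(3)\Rightarrow(1)$ is the other direction of Corollary 4.1 applied to the witnessing $\M_0$.

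For the equivalence with $(5)$, the implication $(2)\Rightarrow(5)$ is immediate from $\mathrm{K}^1(\M)\subseteq\mathrm{I}^1(\M)$. I prove $(5)\Rightarrow(1)$ by contrapositive: if $\mathbb{N}$ is not strong and $j$ is any initial self-embedding of $\M$, Lemma 4.1(1) makes $\fix(j)$ $1$-tall. On the other hand $\fix(j)\prec_{\Sigma_1}\M$ also transfers $\Pi_1$-formulas both ways, so every $\Sigma_1$-definable element of $\M$ remains $\Sigma_1$-definable in $\fix(j)$; hence $\mathrm{K}^1(\M)\subseteq\mathrm{K}^1(\fix(j);0)$. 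If further $\fix(j)\subseteq\mathrm{I}^1(\M)$, then every element of $\fix(j)$ is bounded by some element of $\mathrm{K}^1(\M)\subseteq\fix(j)$, making $\mathrm{K}^1(\fix(j);0)$ cofinal in $\fix(j)$---a contradiction with $1$-tallness.

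Finally, assume $\M\models\pa$ is countable and recursively saturated. Then $(6)\Rightarrow(1)$ is the direct contrapositive of Corollary 4.2. For $(1)\Rightarrow(6)$ I invoke Theorem 3.2(3) with $I=\mathbb{N}$ to choose a small elementary submodel $\M_0\prec\M$ that is itself a countable recursively saturated model of $\pa$ and is \emph{not} of the form $\mathrm{K}(\M;\mathbb{N}\cup\{a\})$ for any $a\in M$. The already-established equivalence $(1)\Leftrightarrow(4)$ then yields a proper initial self-embedding $j$ with $\fix(j)=\M_0$, and $\M_0\models\pa$ gives $\fix(j)\models\mathrm{B}\Sigma_1$. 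The main obstacle is verifying that $\M_0$ is isomorphic to no proper initial segment of $\M$. Any such hypothetical $\M_1\subsetneq_{e}\M$ would be countable recursively saturated with $\mathrm{Th}(\M_1)=\mathrm{Th}(\M)$ and $\ssy(\M_1)=\ssy(\M)$ (the latter because $\M_1$ is an initial segment containing $\mathbb{N}$). My plan is to combine a Smory\'nski-style uniqueness-up-to-isomorphism argument with the extendability techniques of Section 5 to reconstruct, from such an isomorphism, a parameter $a\in M$ with $\M_0=\mathrm{K}(\M;\mathbb{N}\cup\{a\})$, contradicting the defining property of $\M_0$ from Theorem 3.2(3); executing this reconstruction, a direct analogue of the argument used in \cite{ks} for automorphisms, is where I expect the genuine work to lie.
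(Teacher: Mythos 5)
Your treatment of the equivalences $(1)$ through $(5)$ is correct and matches the paper's intended (terse) argument: $(1)\Leftrightarrow(2)$ is Theorem~2.4(3), the cycle $(1)\Rightarrow(4)\Rightarrow(3)\Rightarrow(1)$ is exactly Corollary~3.5 specialized to $I=\mathbb{N}$ together with the observation that $\mathrm{K}^1(\M)$ is a small $\Sigma_1$-elementary submodel, and your $(5)\Rightarrow(1)$ argument via $1$-tallness of $\fix(j)$ (Lemma~4.1(1)) combined with $\mathrm{K}^1(\M)\subseteq\mathrm{K}^1(\fix(j))\subseteq\fix(j)$ is precisely what the paper appeals to. Your $(6)\Rightarrow(1)$ via Corollary~4.2 is also right.

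The gap is in your $(1)\Rightarrow(6)$. You correctly set up a small recursively saturated $\M_0\prec\M$ via Theorem~3.2(3), obtain $j$ with $\fix(j)=M_0$ from $(4)$, observe $\fix(j)\models\mathrm{B}\Sigma_1$, and even observe that any nonstandard proper initial segment $\M_1$ of $\M$ satisfies $\ssy(\M_1)=\ssy(\M)$. But you then propose an elaborate and admittedly unfinished ``Smory\'nski-style'' reconstruction argument. This is unnecessary: you are one line away from the conclusion. The paper's route is to use the remark at the start of Section~3 --- for any $I$-small submodel $M_0=\{(a)_i:i\in I\}$ with $I\subsetneq M_0$, the diagonal set $A:=\{i\in I:\M\models\neg\, i\E(a)_i\}$ lies in $\ssy_I(\M)\setminus\ssy_I(\M_0)$, so $\ssy(\M_0)\neq\ssy(\M)$. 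Since the standard system is an isomorphism invariant of nonstandard models and you have already shown $\ssy(\M_1)=\ssy(\M)$ for any nonstandard proper initial segment $\M_1$, it follows immediately that $\M_0\cong\M_1$ is impossible. Your plan of recovering a parameter $a$ witnessing $\M_0=\mathrm{K}(\M;\mathbb{N}\cup\{a\})$ from a hypothetical isomorphism is not only incomplete but aims at a contradiction with the wrong clause of Theorem~3.2(3); the diagonalization via standard systems is both simpler and what the paper actually relies on.
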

\begin{proof} The equivalences  of statements (1) to (5)  is 
	a straightforward implication of Corollary 3.5 and Theorem 4.1(1). Moreover, $ (6)\Rightarrow(1) $ holds by Corollary 4.2. In order to prove $ (4)\Rightarrow(6) $, similar to the proof of  Theorem 3.2(3), we will find some small recursively saturated elementary submodel $ \M_{0} $ of $ \M $. So statement (4) will provide us with a proper initial self-embedding $ j $ of $ \M $ such that $ \fix(j)=M_{0} $. Clearly $ \fix(j)\models\mathrm{B}\Sigma_{1} $. Moreover,  as we mentioned in the beginning of  Section 3, $ \ssy(\M_{0})\neq\ssy(\M) $. As a result, $ \fix(j) $ is isomorphic to no proper initial segment of $ \M $. 
%	(4)$\Leftrightarrow$(1)$\Leftrightarrow$(2)$\Leftrightarrow$(3)   holds by Corollary 3.5. % Theorem 4.2 implies  (5)$\Rightarrow$(1) . In order to show, %$ (2)\Rightarrow(5)$ note that  by a folklore lemma (e.g. see \cite[Thm. 1]{wp}) every cut  of a model of $ \mathrm{I}\Delta_{0} $ that is closed under multiplication satisfies $\mathrm{I}\Delta_{0}+\mathrm{B}\Sigma_{1}   $. As a result, by Theorem 2.2(1) if $ \mathrm{K}^{1}(\mathcal{M}) $ is nonstandard, then   it is isomorphic to no initial segments of $ \mathcal{M} $.
 % In order to show  (5)$\Rightarrow$(1),  	suppose $ \mathbb{N} $ is not strong in $ \mathcal{M} $; so by Theorem 4.1(1), there is some $ b\in\fix(j) $ such that $ \mathrm{K}^{1}(\M)<b $. But this is in direct contradiction with  $ \mathrm{Fix}(j)\subseteq\mathrm{I}^{1}(\mathcal{M}) $.
\end{proof}

\section{Extendability}
In this section, we will study the extendability of initial embeddings. Most of the theorems of this section are generalizations of results about automorphisms of countable recursively saturated models of $ \pa $ obtained in \cite{kos-kot} and \cite{kos-kot96}. 
\begin{dfn}
	Suppose $ \mathcal{M}$ and $ \mathcal{N} $ are models of $\mathrm{I}\Sigma_{1} $,  $ \mathcal{M}_{0}$ and $\mathcal{N}_{0}$ are bounded  submodels (or proper cuts) of $\mathcal{M} $ and $ \mathcal{N} $ respectively.
We call an  initial embedding $ j:\mathcal{M}_{0}\hookrightarrow\mathcal{N}_{0} $  an\textit{ initial $ (\mathcal{M},\mathcal{N})$-embedding}  if for every $  A\subseteq M_{0} $ it holds that:
	\begin{center}
		$  A\in\ssy_{I}(\M) $  iff $ j( A) \in\ssy_{J}(\N)$,\\
		where  $ I:=\I^{1}(\M;\mm_{0}) $, and $ J:=\I^{1}(\N;j(M_{0})) $.
	\end{center}
	If $ \mathcal{M}=\mathcal{N} $, we call such $ j $ an \textit{initial
	 $ \mathcal{M} $-embedding}. \\

\end{dfn}
First, in the next lemma we will show that the condition in the above definition, i.e. preserving coded subsets, is a  necessary condition for extendability of an initial embedding:
\begin{lem}
	Suppose $ \mathcal{M}$ and $ \mathcal{N} $ are models of  $\mathrm{I}\Sigma_{1} $, $ \mathcal{M}_{0}\subseteq\mathcal{M}$  and $\mathcal{N}_{0}\subseteq\mathcal{N}$ are bounded   submodels (or proper cuts), and $ j:\mathcal{M}_{0}\hookrightarrow\mathcal{N}_{0} $ is an initial embedding. If $ j $ is extendable to some initial embedding $ \hat{j}:\mathcal{M}\hookrightarrow\mathcal{N} $, then   $ j $ is an initial $ (\mathcal{M},\mathcal{N})$-embedding.
\end{lem}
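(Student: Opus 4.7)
The plan is to verify both directions of the biconditional in the definition of initial $(\M,\N)$-embedding by transporting codes along $\hat{j}$. For the forward direction, given $A \in \ssy_I(\M)$ with $A = I \cap a_{\E}$ for some $a \in \mm$, I set $b := \hat{j}(a) \in \nn$ and aim to show $j(A) = J \cap b_{\E}$. The inclusion $j(A) \subseteq J \cap b_{\E}$ is immediate from $j(M_0) \subseteq J$ and the $\Delta_0$-preservation $m\,\E\,a \Rightarrow j(m)\,\E\,b$ enjoyed by the embedding $\hat{j}$.

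For the reverse inclusion, I take $n \in J \cap b_{\E}$. Since $b_{\E}$ is bounded by $b \in \hat{j}(\mm)$ and $\hat{j}(\mm)$ is an initial segment of $\N$, there is a unique $y \in \mm$ with $n = \hat{j}(y)$, and downward $\Delta_0$-preservation gives $y \in a_{\E}$. To conclude $y \in A$ it suffices to show $y \in I$: unpacking $n \in J$ as $\hat{j}(y) \leq f(j(\bar{m}))$ for some $f \in \F$ and $\bar{m} \in M_0$ with $\N \models [f(j(\bar{m}))\downarrow]$, and using the extra constraint $\hat{j}(y) \leq b = \hat{j}(a) \in \hat{j}(\mm)$, I would trap the $\mu$-witness for $f(j(\bar{m}))$'s value inside $\hat{j}(\mm)$ by initiality and pull it back via downward $\Delta_0$-preservation, so that $f(\bar{m})$ is defined in $\M$ with a value bounding $y$. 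Hence $y$ is bounded by an element of $\mathrm{K}^{1}(\M;M_0)$, placing $y \in I$; thus $y \in I \cap a_{\E} = A \subseteq M_0$ and $n = j(y) \in j(A)$.

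For the converse direction, suppose $j(A) = J \cap b_{\E}$ in $\N$. Since $j(A) \subseteq j(M_0)$ is bounded in $\hat{j}(\mm)$ (say by $\hat{j}(b_0)$ where $b_0 \in \mm$ bounds $M_0$), I may replace $b$ by the code $b' \in \N$ of $b_{\E} \cap [0,\hat{j}(b_0)]$, which lies in $\hat{j}(\mm)$ by initiality and $\Delta_0$-closure; write $b' = \hat{j}(a)$ for $a \in \mm$. Then $J \cap \hat{j}(a)_{\E} = j(A)$, and applying $\hat{j}^{-1}$ (together with $\Delta_0$-preservation) yields $M_0 \cap a_{\E} = A$; the containment $J \cap \hat{j}(a)_{\E} \subseteq j(M_0)$ transfers, via the argument dual to the hard inclusion above, to $I \cap a_{\E} \subseteq M_0$, so in fact $A = I \cap a_{\E}$ and $A \in \ssy_I(\M)$.

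The principal technical obstacle is the $\Sigma_1$-downward transfer of bounds in the forward direction, since initial embeddings between models of $\I\Sigma_1$ are not automatically $\Sigma_1$-elementary (the classical $\mathrm{Con}(\pa)$ versus $\neg\mathrm{Con}(\pa)$ example witnesses this). The rescue is that our $\hat{j}(y)$ is specifically constrained to lie below $\hat{j}(a) \in \hat{j}(\mm)$, which forces the relevant $\mu$-witness for $f(j(\bar{m}))$'s value, once truncated to values below $\hat{j}(a)$, to live within the initial segment $\hat{j}(\mm)$, where $\Delta_0$-preservation serves as a surrogate for $\Sigma_1$-elementarity; making this truncation precise and verifying the descent to $\M$ is the crux of the proof.
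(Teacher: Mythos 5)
Your overall strategy---transporting codes along $\hat{j}$ and exploiting that $\hat{j}(M)$ is an initial segment of $\N$---is the same as the paper's, and the easy inclusions (e.g.\ $j(A)\subseteq J\cap \hat{j}(a)_{\E}$) and the bookkeeping with $b'=\hat{j}(a)$ in the converse direction are correct.

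The gap is at the step you yourself flag as the crux: showing $y=\hat{j}^{-1}(n)\in I$ for $n\in J$. The ``trap the $\mu$-witness by initiality / truncate to values below $\hat{j}(a)$'' idea does not work as stated. The constraint $n=\hat{j}(y)\leq\hat{j}(a)$ bounds $\hat{j}(y)$ only; it says nothing about where in $\N$ the $\Sigma_{1}$-witness for $[f(j(\bar{m}))\downarrow]$ lives, and since an initial embedding between models of $\I\Sigma_{1}$ need not be $\Sigma_{1}$-elementary, that witness could a priori lie above all of $\hat{j}(M)$. Nor is the bounding value $f(j(\bar{m}))^{\N}$ itself constrained to be $\leq\hat{j}(a)$, so there is nothing to truncate. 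Consequently you cannot pull the bound back to $\M$ as written, and the same issue recurs in the ``argument dual to the hard inclusion'' in your converse direction.

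What actually closes this, and what the paper's proof uses, is the structural fact that $J=\I^{1}(\N;j(M_{0}))$ is a \emph{proper initial segment of $\hat{j}(M)$} (which follows from $M_{0}$ being bounded in $\M$). Once $J\subsetneq_{e}\hat{j}(M)$ is in hand, any $c=f_{r}(j(\bar{m}))^{\N}$ together with its $\mu$-minimal witness --- itself a $\Sigma_{1}$-definable element from $j(M_{0})$, hence in $J$ --- lies in $\hat{j}(M)$, so $\Delta_{0}$-transfer downward does apply and one gets $\hat{j}^{-1}(J)=I$. The paper then finishes in two lines: $\ssy_{J}(\N)=\ssy_{J}(\hat{j}(M))$ (since $J\subsetneq_{e}\hat{j}(M)\subseteq_{e}\N$, as recalled in the preliminaries), and both directions of the equivalence become a pullback along the isomorphism $\hat{j}^{-1}\colon\hat{j}(M)\to\M$. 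Your proof is missing this key fact; you should either establish $J\subsetneq_{e}\hat{j}(M)$ explicitly or invoke it, after which the element-by-element analysis can be replaced by the shorter transport-of-standard-systems argument.
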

\begin{proof}
Put $ I:=\I^{1}(\M;\mm_{0}) $,  $ J:=\I^{1}(\N;j(M_{0})) $, and let $  A\subseteq M_{0}  $ be arbitrary. If $  {A=I\cap(\alpha_{\E})^{\M}} $ for some $ \alpha $ in $ \mathcal{M} $, then clearly $ j(A)=J\cap ((\hat{j}(\alpha))_{\E})^{\N}$. Conversely, suppose  $ j( A)\in\ssy_{J}(\N) $. Since $ \mm_{0} $ is bounded in $ \M $, we have $ J\subsetneq_{e}\hat{j}( M) $.   As a result, $ j(\A)\in\ssy_{J}(\hat{j}( M)) $, which implies that $A\in\ssy_{I}(\M)  $.
\end{proof}

  Converse of the above lemma holds, when $ \M_{0} $ and $ j(\mm_{0}) $ are $ \Sigma_{1} $-elementary initial segments of  $ \M $ and $ \N $:
\begin{theorem}
	Suppose $ \mathcal{M}$ and $ \mathcal{N} $ are countable and nonstandard models of $\mathrm{I}\Sigma_{1} $, and $  I$ and $ J $  are $ \Sigma_{1} $-elementary initial segments of  $ \M $ and $ \N $, respectively. Then for any  isomorphism $ {j: I\rightarrow J} $ which is an initial  $ (\mathcal{M},\mathcal{N})$-embedding and each $ b>J $, there exists some  proper initial embedding  $ \hat{j}:\mathcal{M}\hookrightarrow\mathcal{N} $ such that $ \hat{j}\upharpoonright_{I}=j $ and $ \hat{j}(M)<b $. 
\end{theorem}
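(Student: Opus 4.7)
The plan is to reduce the theorem to Theorem 2.3 by first relabeling $ \N $ so that the image $ J $ of $ j $ is literally identified with $ I $. Let $ \N' $ be any $ \Ll_{\A}$-structure equipped with an isomorphism $ \sigma:\N\to\N' $ satisfying $ \sigma\upharpoonright_{J}=j^{-1} $; then $ \sigma(j(i))=i $ for every $ i\in I $, so $ I $ becomes a proper initial segment of $ \N' $ shared with $ \M $, and the element $ b':=\sigma(b) $ lies above $ I $ in $ \N' $. If I can produce a proper initial embedding $ \hat{j}':\M\hookrightarrow\N' $ that fixes $ I $ pointwise and satisfies $ \hat{j}'(\mm)<b' $, then $ \hat{j}:=\sigma^{-1}\circ\hat{j}' $ finishes the proof: it is a proper initial embedding of $ \M $ into $ \N $ with $ \hat{j}(\mm)<b $, and for each $ i\in I $ we have $ \hat{j}(i)=\sigma^{-1}(i)=j(i) $.

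To obtain $ \hat{j}' $ I would apply Theorem 2.3 to $ \M,\N' $ with the trivial parameters $ c=d=0 $, checking the three required hypotheses. First, $ I $ is closed under exponentiation: since $ I\prec_{\Sigma_{1}}\M\models\I\Sigma_{1}\vdash\mathrm{Exp} $, the $ \Sigma_{1}$-statement $ \exists z\,(z=x^{y}) $ reflects to $ I $ whenever $ x,y\in I $. Second, $ \ssy_{I}(\M)=\ssy_{I}(\N') $: by $ \Sigma_{1}$-elementarity together with the least-witness principle one checks that $ \I^{1}(\M;I)=I $ and $ \I^{1}(\N;J)=J $, so the hypothesis that $ j $ is an initial $ (\M,\N)$-embedding unpacks directly to $ \ssy_{I}(\M)=j^{-1}[\ssy_{J}(\N)] $; transporting this identity through $ \sigma $ gives exactly $ \ssy_{I}(\N') $ on the right-hand side. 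Third, for each $ i\in I $ and each $ \Delta_{0}$-formula $ \delta $, if $ \M\models\exists z\,\delta(z,0,i) $ then by $ \Sigma_{1}$-elementarity a witness $ z_{0} $ already lies in $ I $, and since $ b'>I $ in $ \N' $ we obtain $ \N'\models\exists z<b'\,\delta(z,0,i) $.

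With these hypotheses verified, Theorem 2.3 delivers a proper initial embedding $ \hat{j}':\M\hookrightarrow\N' $ with $ I\subseteq\I_{\mathrm{fix}}(\hat{j}') $ (hence $ \hat{j}'\upharpoonright_{I}=\mathrm{id}_{I} $) and $ \hat{j}'(\mm)<b' $; composing with $ \sigma^{-1} $ yields the desired $ \hat{j} $. The only non-routine ingredient is the bookkeeping in the second verification: making sure that the definitional condition of being an initial $ (\M,\N)$-embedding translates into exactly the Scott-set equality demanded by Theorem 2.3, which rests on the identification $ \I^{1}(\M;I)=I $ (and similarly for $ J $). Once this is in place, no additional back-and-forth is required beyond what is already encoded in Theorem 2.3.
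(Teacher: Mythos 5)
Your argument is correct and takes a genuinely different route from the paper's sketch. The paper proves the theorem directly by a fresh back-and-forth construction (patterned on Theorem 3.3 of the earlier B-Enayat paper), threading the initial $(\M,\N)$-embedding hypothesis through each stage and invoking $\Sigma_1$-Overspill in $\N$ to produce images. You instead reduce the entire statement to the already-established Theorem 2.3 by a change of coordinates: replacing $\N$ with an isomorphic copy $\N'$ via $\sigma$ satisfying $\sigma\upharpoonright_J=j^{-1}$ turns $I$ into a literal shared proper cut of $\M$ and $\N'$ (with the two induced $\Ll_{\A}$-structures on $I$ agreeing because $j$ is an isomorphism), after which the three hypotheses of Theorem 2.3 with $c=d=0$ are routine. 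Your observation that $\I^{1}(\M;I)=I$ whenever $I\prec_{\Sigma_1}\M$ is precisely what lets the definition of initial $(\M,\N)$-embedding unpack to $\ssy_I(\M)=j^{-1}[\ssy_J(\N)]$, which transports through $\sigma$ to the required $\ssy_I(\M)=\ssy_I(\N')$; the third hypothesis then follows from $\Sigma_1$-reflection of witnesses into $I$, $\Delta_0$-absoluteness of the cut in $\N'$, and $b'>I$. The payoff of your approach is a shorter, modular proof exhibiting Theorem 5.2 as essentially a corollary of Theorem 2.3 after identifying $J$ with $I$, at the small cost of building the auxiliary structure $\N'$; the paper's direct argument is more self-contained but re-derives machinery already encapsulated in Theorem 2.3. (One minor remark: the ``least-witness principle'' you invoke is not really needed to see $\I^{1}(\M;I)=I$; $\Sigma_1$-elementarity, upward persistence of $\Sigma_1$ formulas along the cut, and downward closure of $I$ already suffice.)
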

\begin{proof}[Sketch of proof] The proof is conducted by a  back-and-forth argument similar to the one used in the proof of \cite[Thm. 3.3]{our}; we will build   finite partial functions $ \bar{u}\mapsto\bar{v} $ such that  the following  induction hypothesis holds:
	\begin{center}
	If 	$ \mathcal{M}\models  [f(\bar{u},i)\downarrow]$, then $\N\models  [f(\bar{v},j(i))\downarrow] ^{<b}$,\\
		for every $ f\in\F$ and $ i\in I $.
	\end{center}
 For the `forth' steps, if $ \bar{u}\mapsto\bar{v} $ is built, for given $ m\in M $ we define:\\

	$ H:=\lbrace\langle r,i\rangle\in I: \ \mathcal{M}\models [f_{r}(\bar{u},m,i)\downarrow] \rbrace $.\\
	
Then, let $ h\in M $ such that $ H=I\cap h_{\E} $. Since $ j $ is an initial $ (\M,\N) $-embedding,  there exists some $ h'\in N $ such that $ j(H)=J\cap h'_{\E} $.  Therefore, by induction hypothesis for every $ s\in I $ it holds that:\\

$ (1): \ \ $	$ \mathcal{N}\models\exists x, w<b \ \forall \langle r,i\rangle <j(s) \ (\langle r,i\rangle\mathrm{E}h'\rightarrow [f_{r}(\bar{v},x,i))\downarrow]^{<w} ) $.\\
	
	Since $ j $ is onto, statement (1) implies that for every $ t\in J $ it hold that:\\

$ (2): \ \ $	$ \mathcal{N}\models\exists x, w<b \ \forall \langle r,i\rangle <t \ (\langle r,i\rangle\mathrm{E}h^{'}\rightarrow[f_{r}(\bar{v},x,i))\downarrow]^{<w})  $.\\

Therefore, by using  $ \Sigma_{1}$-Overspill in $ \mathcal{N} $, we will find some image for $ m $, for which induction hypothesis holds. The `back' stages can be done similarly.  
\end{proof}

The proof of the  above theorem can also be modified for  $ I$-small submodels:

\begin{theorem}
	Suppose $ \mathcal{M}\models\mathrm{I}\Sigma_{1} $ is countable and nonstandard, $ I $ is a strong cut of $ \M $,   $ \mathcal{M}_{0} $ is an $  I$-small $ \Sigma_{1} $-elementary  submodel of $ \mathcal{M} $ such that $  M_{0}:=\{(a)_{i}: i\in I\} $, and $ j $ is an initial embedding of $ \mathcal{M}_{0} $ such that $ j(I)\subseteq_{e}\M $.
	 Then the following are equivalent: 
	\begin{itemize}
	\item[(1)] $ j \upharpoonright_{I}$ is an initial $ \M$-embedding, and there exists some $ b\in M $ such that $ 
	{\mathcal{M}\models j((a)_{i})=(b)_{j(i)}} $ for all $ i\in I $. 
	\item[(2)]  $ j $ extends to some proper initial self-embedding of $ \mathcal{M} $.
	\end{itemize}
	\end{theorem}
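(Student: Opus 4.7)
The plan is to prove the two implications separately, with $(2)\Rightarrow(1)$ following directly from Lemma 5.2 and $(1)\Rightarrow(2)$ requiring a back-and-forth adapted from the proofs of Theorems 3.4 and 5.2. For $(2)\Rightarrow(1)$: given a proper initial self-embedding $\hat{j}$ of $\M$ extending $j$, set $b:=\hat{j}(a)$. For $i\in I\subseteq M_{0}$ we have $\hat{j}(i)=j(i)$, and since $(x)_{y}=z$ is $\Delta_{0}$-definable, $j((a)_{i})=\hat{j}((a)_{i})=(\hat{j}(a))_{\hat{j}(i)}=(b)_{j(i)}$. The initial $\M$-embedding property of $j\upharpoonright_{I}$ then follows from Lemma 5.2 applied to the extension of $j\upharpoonright_{I}$ provided by $\hat{j}$, noting that $I$ is a proper cut and hence bounded in $\M$.

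For $(1)\Rightarrow(2)$, I would construct $\hat{j}$ as the union of a chain of finite partial functions $\sigma_{n}:\bar{u}_{n}\mapsto\bar{v}_{n}$, starting from $\sigma_{0}=\emptyset$. First I would fix $b_{0}\in M$ with $b_{0}>j(M_{0})$ (possible because $j(M_{0})$ is bounded). The induction hypothesis to maintain is
$$\mathcal{M}\models [f(\bar{u}_{n},(a)_{i_{1}},\ldots,(a)_{i_{k}})\downarrow]\ \Rightarrow\ \mathcal{M}\models [f(\bar{v}_{n},(b)_{j(i_{1})},\ldots,(b)_{j(i_{k})})\downarrow]^{<b_{0}}$$
for all $f\in\F$, $k\in\mathbb{N}$, and $i_{1},\ldots,i_{k}\in I$. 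The initial IH for $\sigma_{0}=\emptyset$ holds because each $(a)_{i_{\ell}}$ is in $M_{0}\prec_{\Sigma_{1}}\M$, $j$ is $\Sigma_{1}$-preserving with $j((a)_{i_{\ell}})=(b)_{j(i_{\ell})}\in j(M_{0})<b_{0}$, and $\Sigma_{1}$-sentences transfer upward along the end extension $j(M_{0})\subseteq_{e}\M$: a $\Sigma_{1}$-witness in $\M$ on the left side reflects to $M_{0}$, maps via $j$ to a witness in $j(M_{0})$, and pushes back to $\M$ as a witness bounded by $b_{0}$.

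The back-and-forth alternates forth stages (extending the domain to exhaust $M$) and back stages (extending the range to an initial segment). Crucially, when a new domain element $m$ already lies in $M_{0}$, say $m=(a)_{i_{0}}$ for some $i_{0}\in I$, the IH applied to the $\Sigma_{1}$-definable test function sending $(x,y)$ to $1$ if $x=y$ and to undefined otherwise forces the image $m'$ to be $(b)_{j(i_{0})}=j(m)$; so forth stages automatically guarantee $\hat{j}\upharpoonright_{M_{0}}=j$. At the limit, $\hat{j}:=\bigcup_{n}\sigma_{n}$ is $\Sigma_{1}$-preserving with $\hat{j}(M)<b_{0}$, yielding a proper initial self-embedding of $\M$ extending $j$.

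The hard part will be the forth stage itself: given new $m\in M$, finding $m'<b_{0}$ preserving the IH. I would realize in $\M$ the bounded $\Pi_{1}$-type
$$p(y):=\{y<b_{0}\}\cup\bigl\{\forall\bar{\imath}\in I^{k}\,\bigl([f(\bar{u}_{n},m,(a)_{\bar{\imath}})\downarrow]\rightarrow[f(\bar{v}_{n},y,(b)_{j(\bar{\imath})})\downarrow]^{<b_{0}}\bigr):f\in\F,\,k\in\mathbb{N}\bigr\}.$$
Truncating to $p_{s}(y)$ by replacing $\bar{\imath}\in I^{k}$ with $\bar{\imath}<s$, the task reduces to showing each $p_{s}$ for $s\in I$ is finitely satisfiable; $\Sigma_{1}$-Overspill then furnishes $s>I$ with $p_{s}$ still finitely satisfiable, and realization of the coded bounded $\Pi_{1}$-type in $\M$ yields the desired $m'$. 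Finite satisfiability mirrors claim $(\ast_{k})$ from the proof of Theorem 3.4, combining strongness of $I$ (equivalent to $(I,\ssy_{I}(\M))\models\mathrm{ACA}_{0}$), the initial $\M$-embedding property of $j\upharpoonright_{I}$ to transfer coded subsets between $I$ and $j(I)$, and Lemma 3.3 to encode the relevant subsets inside $M_{0}$ via codes $(a)_{\xi},(a)_{\zeta},(a)_{\rho}$ whose $j$-images $(b)_{j(\xi)},(b)_{j(\zeta)},(b)_{j(\rho)}\in j(M_{0})$ carry the corresponding data on the range side. The back stages run dually with a symmetric type.
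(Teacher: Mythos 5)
Your proposal follows the paper's argument essentially step for step: the direction $(2)\Rightarrow(1)$ is obtained from Lemma 5.1 (which you cite as ``5.2'') together with the $\Delta_0$-definability of decoding, and $(1)\Rightarrow(2)$ is the same back-and-forth with the same induction hypothesis $[f(\bar{u},(a)_{i})\downarrow]\rightarrow[f(\bar{v},(b)_{j(i)})\downarrow]^{<b_0}$, with finite satisfiability of the truncated bounded $\Pi_1$-type secured by strongness of $I$, the initial $\M$-embedding property of $j\upharpoonright_{I}$, and Lemma 3.3, followed by Overspill. The only caveat, which you do gesture at, is that the displayed type $p_{s}(y)$ literally mentions $j(\bar{\imath})$ for $\bar{\imath}<s$, which is not meaningful once $s>I$; in the actual formula one must replace references to $j$ by the coded sets $l$, $l'$ with $j^{-1}(L)=I\cap l'_{\E}$ supplied by the initial $\M$-embedding hypothesis, exactly as in the paper's $\Theta$-formula.
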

\begin{proof}[Sketch of proof]
$(2) \Rightarrow(1) $ holds by Lemma 5.1. In order to prove $ (1)\Rightarrow(2) $, we will use  a similar argument to the proof of \cite[Thm. 3.3]{our} to obtain  an extension $ \hat{j} $ of $ j $. For this purpose, first we will fix some $ d\in M $ which  is an upper bound for $ \mm_{0} $. Then, we will  build   finite partial functions $ \bar{u}\mapsto\bar{v} $ such that  the following  induction hypothesis holds:
\begin{center}
 	$ \mathcal{M}\models[f(\bar{u},(a)_{i})\downarrow]\rightarrow[f(\bar{v},(b)_{j(i)})\downarrow]^{<d} $,\\
	for every $ f\in\F $ and $ i\in I $.
\end{center}
Here, we outline the proof for the `back' steps and the proof of `forth' steps is left to the reader. Suppose $ \bar{u}\mapsto\bar{v} $ is built, and $ m<\max\{\bar{v}\} $ is given. We define:\\

$ L:=\lbrace\langle r,i\rangle\in j(I): \ \mathcal{M}\models  \neg[f_{r}(\bar{v},m,(b)_{i})]^{<d} \rbrace $.\\

Then, let $ l\in M $ such that $ L=j(I)\cap l_{\E} $. Since $ j\upharpoonright_{I} $ is an initial $ \M $-embedding, then there exists some $ l'\in M $ such that $ j^{-1}(L)=I\cap l'_{\E} $. Moreover, by using Lemma 3.3, for every $ s\in I $ there exists some $ (a)_{i_{s}}\in M_{0} $ which  codes of the following subset of $ M_{0} $:

$$A:=\{\langle r,(a)_{i}\rangle: \ \M\models(\langle r,i\rangle <s \ \wedge \langle r,i\rangle\E l' ) \} .$$

By $ \Pi_{1} $-Overspill, it suffices to prove that  for every $ s\in I $ it holds that:\\

$ (\star): \ \ $	$ \M\models\exists x<\max\{\bar{u}\} \ \forall \langle r,i\rangle<s  \ (\langle r,i\rangle\E l'\rightarrow \neg[f_{r}(\bar{u},x,(a)_{i})\downarrow] ) $.\\

Suppose not; i.e. there exists some $ s\in I $ which for statement $ (\star) $ does not hold. So we have:\\

$ (i): \ \ $	$ \M\models\forall x<\max\{\bar{u}\} \ \exists \langle r,i\rangle<s  \ (\langle r,i\rangle\E l' \ \wedge [f_{r}(\bar{u},x,(a)_{i})\downarrow] ) $.\\

As a result, by using $ \Sigma_{1} $-Collection in $ \M $, from statement $ (i) $, induction hypothesis, and the way we chose $(a)_{i_{s}} $, we may conclude that:\\

$ (ii): \ \ $	$ \M\models\forall x<\max\{\bar{v}\} \ \exists \langle r,\epsilon\rangle\E (b)_{j(i_{s})} \ ( [f_{r}(\bar{v},x,\epsilon)\downarrow]^{<d} ) $.\\

So by statement $ (ii) $, there exists some $ \langle r,i\rangle<s $ such that:\\

$ (iii): \ \ $ $ \M\models (\langle r,i\rangle\E l' \ \wedge \ [f_{j(r)}(\bar{v},m,(b)_{j(i)})\downarrow]^{<d} ) $. \\

But statement $ (iii) $ is in direct contradiction with the way we chose $ l' $.
 
\end{proof}
In the last theorem, we  investigate  whether we can control the set of fixed points, while extending an isomorphism to an initial self-embeddings with larger domain:
\begin{theorem}
Suppose $ \mathcal{M}\models\mathrm{I}\Sigma_{1} $ is countable and nonstandard,  $ I $ is a strong $ \Sigma_{1} $-elementary  initial segment of $ \mathcal{M} $, and $ j: I\rightarrow I $ is an isomorphism and an initial $ \mathcal{M}$-embedding. Then there exists some proper initial self-embedding  $ \hat{j} $ of $\mathcal{M}$ such that $ \hat{j}\upharpoonright_{ I}=j $, and $ {\mathrm{Fix}(\hat{j})=\mathrm{Fix}(j)} $. 
\end{theorem}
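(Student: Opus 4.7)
The plan is to prove Theorem 5.4 by a back-and-forth construction that marries the extension technique of Theorem 5.2 (used to extend $j$ to all of $\M$) with the fixed-point control technique of Theorem 3.4 (used to prevent spurious fixed points). Since $\hat j$ is required to extend $j$ and $\fix(j)\subseteq I$, the equality $\fix(\hat j)\cap I=\fix(j)$ will come for free once the extension exists; the extra task is to ensure that no element of $M\setminus I$ becomes a fixed point, and more generally that no $\Sigma_1$-definable element over $\bar u$ outside $\fix(j)$ is fixed. Fix $b\in M$ with $b>I$ and aim for $\hat j(M)<b$, so that elements $m\geq b$ are handled for free and only $m\in[I,b)\setminus I$ needs active attention.

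I would build $\hat j$ as a union of finite partial maps $\bar u\mapsto\bar v$ satisfying two induction hypotheses: the Theorem 5.2 forth-hypothesis
$$\text{for all }f\in\F,\ i\in I:\ \M\models[f(\bar u,i)\downarrow]\Rightarrow\M\models[f(\bar v,j(i))\downarrow]^{<b},$$
and the Theorem 3.4 non-fixity hypothesis
$$\text{for all }f\in\F,\ i\in I:\ f(\bar u,i)\notin\fix(j)\Rightarrow f(\bar u,i)\neq f(\bar v,j(i)).$$
At the forth stage for $m\in M\setminus I$, I would extend the Theorem 5.2 forth-stage type $p_s(y)$ by the bounded $\Pi_1$ clauses
$$\forall i<s\,\big(\langle n,i\rangle\E\alpha\wedge[f_n(\bar v,y,j(i))\downarrow]^{<b}\big)\Rightarrow f_n(\bar u,m,i)\neq f_n(\bar v,y,j(i))\qquad(n\in\mathbb N),$$
where $\alpha\in M$ codes the set
$$C:=\{\langle n,i\rangle\in I:\M\models[f_n(\bar u,m,i)\downarrow]\text{ and }f_n(\bar u,m,i)\notin\fix(j)\}.$$
The membership $C\in\ssy_I(\M)$ is established by an argument parallel to the derivation of ``$C\in\ssy_I(\M)$'' in the proof of Theorem 3.4, in which the role played there by the $(a)_i$-parametrization of $N_0$ is replaced by the automorphism $j$ of $I$ (so $j\upharpoonright_{[0,s]}$ is available as a coded object inside $I$ for each $s\in I$ because $I\models\pa$, and $j$ preserves $\ssy_I(\M)$ because it is an initial $\M$-embedding), and then one invokes $(I,\ssy_I(\M))\models\mathrm{ACA}_0$ coming from the strongness of $I$. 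Taking $f_n$ to be the projection onto the second coordinate in the clauses above forces $y\neq m$ whenever $m\notin\fix(j)$. Back stages are handled symmetrically via a type $q_s(x)$ in analogy with the back portion of the proof of Theorem 3.4.

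The main obstacle is showing that the enlarged forth-stage type is finitely satisfiable in $\M$, so that $\Sigma_1$-Overspill yields the required image $m'$. I plan to handle this by directly adapting claim $(\ast_k)$ from the proof of Theorem 3.4: assume a minimal counterexample with $k_0$ many inequality clauses; use strong $\Sigma_1$-Collection together with Lemma 3.3 applied to the trivially $I$-small submodel $I$ itself to code the problematic sets by elements of $I$; transport them to the $\bar v$-side via $j$ and pull back to a $\Delta_0$-consequence on the $\bar u$-side through (IH1); and then contradict either $\Sigma_1$-Pigeonhole in $\M$ (in the case $k_0=1$) or the minimality of $k_0$ (in the case $k_0>1$), just as at the end of the proof of Theorem 3.4. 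Putting the forth and back stages together yields the desired $\hat j$: it extends $j$ (in each forth step for $i\in I$, assigning $j(i)$ is consistent with both induction hypotheses because $j$ is an isomorphism of $I$ and $I\prec_{\Sigma_1}\M$), it is a proper initial self-embedding of $\M$ with $\hat j(M)<b$ by (IH1), and it satisfies $\fix(\hat j)=\fix(j)$ by (IH2) applied to projection functions.
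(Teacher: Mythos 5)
Your proposal captures the right high-level goal (combine the Theorem~5.2 extension machinery with the Theorem~3.4 fixed-point control), but it runs into several structural problems that the paper's actual proof is specifically designed to avoid.

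The first and most serious gap is that you run a single back-and-forth directly on $\mathcal{M}$. The Theorem~3.4 argument for finite satisfiability of the forth-stage type (the claim $(\ast_k)$) depends essentially on the fact that the new element $m$ to be added is bounded by a term, $m\leq t(\bar u, \diamondsuit)$: this bound is used when you substitute $y=m$ into the universally quantified statements like (8) and (10), and it also supplies the bounds for the inductively defined $\Sigma_1$-functions $w$, $h$, etc. That bound is available only because the model one constructs $h$ on is $\mathrm{H}^1$-type. The paper handles this by first fixing $a>I$, constructing $h$ as an initial self-embedding of $\mathcal{N}:=\mathrm{H}^1(\mathcal{M};a)$ with $\fix(h)=\fix(j)$ and $h\upharpoonright_I=j$, then finding (via Theorem~2.3) a proper initial embedding $k:\mathcal{M}\hookrightarrow\mathcal{N}$ fixing $I$ pointwise with $b\in k(M)$, and finally setting $\hat j:=k^{-1}hk$. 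Without this conjugation step the back-and-forth has no mechanism to make the $(\ast_k)$-style argument work for an unbounded $m\in M$, and the $\Sigma_1$-Overspill realization of the type may fail.

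A second gap is the definition of $C$. You define $C$ using ``$f_n(\bar u,m,i)\notin\fix(j)$.'' But $\fix(j)$ is determined by the external isomorphism $j$ and is generally not a member of $\ssy_I(\mathcal{M})$, so there is no reason $C$ should be coded; the parallel with Theorem~3.4 breaks down precisely because in that theorem the target set $N_0$ is coded (being $I$-small), whereas $\fix(j)$ here is not. The paper instead defines $C$ with the condition $f_r(\bar u,m,i)\notin\mathrm{K}^1(\mathcal{N};I\cup\{\bar u\})$, whose complement inside $I$ is arithmetical in the coded $\Pi_1$-relation $R$ expressing ``$f_r(\bar u,m,i)=f_t(\bar u,k)$,'' so that $C\in\ssy_I(\mathcal{M})$ follows from strongness. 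Crucially, the pairs you are thereby omitting (values in $\mathrm{K}^1(\mathcal{N};I\cup\{\bar u\})\setminus I$, or in $I\setminus\fix(j)$) are already handled automatically by the induction hypotheses $\mathrm{P}$ and $\mathrm{Q}$, so nothing is lost.

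Relatedly, your type clauses mention $j(i)$ directly, e.g.\ ``$f_n(\bar v,y,j(i))$.'' Since $j$ is not definable in $\mathcal{M}$, this is not a well-formed first-order formula for $i$ ranging up to an overspilled $s>I$. The paper sidesteps this by pushing the $j$-shift into coded parameters: it takes $\alpha$ and $\alpha'$ coding $C$ and $j(C)$ respectively (and similarly $\beta,\beta'$), noting that $j$ is an initial $\mathcal{N}$-embedding, so that the type clauses reference only the coded $\alpha$, $\alpha'$, $\beta'$, never $j$ itself. You would need the same device. Finally, you also omit the careful set-up of the first back-and-forth step: the paper fixes $b>I$ using strongness so that any $f(a,i)>I$ is forced to lie above $b$, then starts the construction of $h$ with $a\mapsto\bar j(a)$, where $\bar j$ is the Theorem~5.2 extension with $\bar j(M)<b$. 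This is what guarantees both $\mathrm{P}(a,\bar j(a))$ and $\mathrm{Q}(a,\bar j(a))$ hold at the start, and nothing in your sketch replaces it.
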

\begin{proof}[Sketch of proof]	
First, we will fix some arbitrary $ a>I $. Since  $ I $ is strong in $ \M $, there exists some $ b>I $ such that:
		\begin{center}
	$ (\star): \ \ $	if $ \M\models [f(a,i)\downarrow] $ and $ f(a,i)>I $ then $ f(a,i)>b $, for all $ f\in\F $ and $ i\in I $.
		\end{center}
		 So by Theorem 5.2, there exists some proper initial self-embedding $ \bar{j} $   of $ \M $ such that $ \bar{j}\upharpoonright_{I}=j $ and $ \bar{j}(M)<b $. If $ \fix(\bar{j})=\fix(j) $, then we are done. Otherwise we will build $ \hat{j} $ in the following way:
		\begin{itemize}
	\item By using a similar argument to the proof of Theorem 3.4, we construct some proper initial self-embedding $ h $ of $ \N:=\mathrm{H}^{1}(\M;a) $ such that $ h\upharpoonright_{I}=j $, $ \fix(h)=\fix(j) $, and $ h(\nn)<b $. 	
	In order to construct such $ h $, we will inductively construct finite functions $ \bar{u}\mapsto\bar{v} $  such that:

				$ \mathrm{P}(\bar{u},\bar{v})\equiv  [f(\bar{u},i)\downarrow]\rightarrow[f(\bar{v},j(i))\downarrow]^{<b} $, for all $f\in\mathcal{F} $ and $ i\in I $; and\\
				$ \mathrm{Q}(\bar{u},\bar{v})\equiv \left(\begin{array}{c}
				[f(\bar{u},i)\downarrow] \wedge [f(\bar{v},j(i))\downarrow]^{<b}\wedge \\ f(\bar{u},i)\notin  I 
				\end{array}\right)  \Rightarrow f(\bar{u},i)\neq f(\bar{v},j(i)) $, for all $ f\in\mathcal{F}$ and all $ i\in I $.
		\begin{itemize}	
			\item For the first step of induction, we will take $ a\mapsto \bar{j}(a) $; clearly $ \mathrm{P}(a,\bar{j}(a)) $ holds in $ \M $. Moreover, by statement $ (\star) $ and since $ \fix(\bar{j})<b $, the property $ \mathrm{Q}(a,\bar{j}(a)) $  also holds in $ \M $. 
			\item Then suppose $ \bar{u}\mapsto\bar{v} $ is built. We will just mention the changes that should be made in the `forth' steps of Theorem 3.4, and `back' steps should be modified similarly:
		\begin{itemize}
				\item Suppose $ m\in N\setminus\{\bar{u}\} $ is given. By the definition of $ \N $, without loss of generality, we may assume that $ m\leq t(\bar{u},a) $ for some $ t\in\F $. Put: 
		$$ C:= \{\langle r,i\rangle\in I: \N\models[f_{r}(\bar{u},m,i)\downarrow]\wedge f_{r}(\bar{u},m,i)\notin\mathrm{K}^{1}(\N;I\cup\{\bar{u}\})  \} .$$
		
		Let $  \alpha,\alpha'\in N $ such that $  C=I\cap \alpha_{\E} $ and $ j( C)=I\cap\alpha'_{\E} $ (note that since $ \N $ is a $ \Sigma_{1} $-elementary initial segment of $ \M $ containing $ I $, $ j $ is an initial $ \N$-embedding):
%				\item Let  $ \sigma:=\langle x: \ x\E\alpha \rangle $,  $ \sigma':=\langle x: \ x\E\alpha' \rangle $,  $ \kappa:=\mathrm{Len}(\sigma)$, and  $\kappa'=\mathrm{Len}(\sigma') $.
			\item Let  $L:= \{\langle r,i\rangle\in I: \N\models[f_{r}(\bar{u},m,i)\downarrow]\} $, $ L=I\cap\beta_{\E} $, and  $ j(L)=I\cap\beta'_{\E} $ for $ \beta,\beta'\in N $.
				\item For every $ s\in\bar{j}(N) $ such that $ \bar{j}(s')=s $ for some $ s'\in N $, let:\\ 	$ p_{s}(y):= \lbrace y< t(\bar{v},\bar{j}(a))\rbrace\cup p_{s1}(y)\cup p_{s2}(y)$; where: \\
			${p_{s1}(y):=\lbrace\forall i<s(\langle n,i\rangle\E\beta'\rightarrow [f_{n}(\bar{v},y,i)\downarrow]^{<b}): \ n\in\mathbb{N}\rbrace;} $ and \\ 
				$p_{s2}(y):=\left\lbrace \forall w<s'\ \forall i<s  
			\left(\begin{array}{c}
			\left(\begin{array}{c}\langle n,w\rangle\E\alpha \ \wedge \ \langle n,i\rangle\mathrm{E}\alpha'\ \wedge \\
			
			 [f_{n}(\bar{v},y,i)\downarrow]^{<b}\end{array}\right) \rightarrow\\ 
		f_{n}(\bar{u},m,w)\neq f_{n}(\bar{v},y,i)
			\end{array}\right) :   \ n\in\mathbb{N} \right\rbrace$.
				\item In order to  find some $ s>I $ such that $ s\in \bar{j}(N) $ and $ p_{s}(y) $ is finitely satisfiable, we will adapt  the rest of the proof of Theorem 3.4  accordingly; for instance, we will mention two of these adaptations: 
				
					\item[$ (1) $] Let $ d'\in N $ such that $ d'>I $ and $ d:=\bar{j}(d') $. Moreover, for every $i,s,s'\in N$, let $  \Theta(s,s',i,\bar{u},m,\bar{v},b,\bar{j}(a),\alpha,\alpha',\beta') $ be the following $ \Delta_{0}$-formula:\\
				
						${\forall r<i \ \exists y\leq t(\bar{v},\bar{j}(a))  \left(\begin{array}{c}
							\forall w<s  (\langle r,w\rangle\mathrm{E}\beta'\rightarrow [f_{r}(\bar{v},y,w)\downarrow]^{<b}) \ \wedge  
							\\ \forall w<s\forall w'<s' \forall r'<i\left(\begin{array}{c}
							\left(\begin{array}{c}\langle r',w'\rangle\mathrm{E}\alpha'\wedge\\	\langle r',w\rangle\mathrm{E}\alpha\wedge\\
						 
						 	[f_{r'}(\bar{v},y,w)\downarrow]^{<b}\end{array}\right)\rightarrow \\ f_{r'}(\bar{u},m,w')\neq f_{r'}(\bar{v},y,w)
							\end{array}\right)
							\end{array}\right) } $.\\
				
				Then, for every $ i\in M $, we define:\\
				
						$ g(i):=\mathrm{max}\lbrace  w<d': \ \mathcal{M}\models\exists x\leq d \ \Theta(x,w,i,\bar{u},m,\bar{v},b,\bar{j}(a),\alpha,\alpha',\beta') \rbrace $.\\

				Since $  I $ is strong, there exists some $ e'>  I $ such that $ e'\leq d' $,  and for all $ i\in  I $, $ g(i)>  I $ iff $ g(i)>e' $. Then, for every $ i\in M $ put:\\
				
					$ l(i):=\mathrm{max}\left\lbrace  x<\bar{j}(e'): \ \mathcal{M}\models \left(\begin{array}{c}
					[g(i)\downarrow]^{<d'} \ \wedge \ g(i)>e'\ \wedge  \\ \Theta(x,g(i),i,\bar{u},m,\bar{v},b,\bar{j}(a),\alpha,\alpha',\beta')
					\end{array}\right) \right\rbrace $.\\
					
					 Again, since $  I $ is strong, there exists some $ e>  I $ such that $ e\leq d $,  and for all $ i\in  I $, $ l(i)>  I $ iff $ l(i)>e $. Then $ p_{e}(y) $ is a finitely satisfiable type.
				\item[$ (2) $] Instead of the function $ \langle o(\diamondsuit,y),h(\diamondsuit,y)\rangle $ we need to define the following function:\\
				
				$ \langle o(\diamondsuit,y),h(\diamondsuit,y),h'(\diamondsuit,y)\rangle:=\min\left\lbrace\langle n_{t},i,w\rangle\E\alpha_{s_{_{0}}}: 
				\left(\begin{array}{c}
				[b(\diamondsuit,y)\downarrow]\wedge\\
				
				[f_{n_{t}}(\diamondsuit,y,i)\downarrow]^{<b(\diamondsuit,y)} \ \wedge\\
				
				[s_{t}(\diamondsuit,w)\downarrow]^{<b(\diamondsuit,y)} \wedge\\ s_{t}(\diamondsuit,w)= f_{n_{t}}(\diamondsuit,y,i)\end{array}\right) 
				\right\rbrace $;\\ 
				where $ \alpha_{s_{0}}\in I $ is the code of the following subset of $ I $:
				$$ \{\langle n,i,w\rangle: \ \M\models i<s_{0}\wedge w<j^{-1}(s_{0})\wedge\langle n,w\rangle\E\alpha\wedge\langle n,i\rangle\E\alpha'\}. $$

				\end{itemize}  
	The rest of the adaptations should be made similar to statements (1) and (2) in order to construct $ h $.
		
	\item 	If $ \M=\N $, then we are done. Otherwise, by using Theorem 2.3 we shall find some proper initial embedding $ k:\M\hookrightarrow\N $   such that $ I\subseteq\I_{\mathrm{fix}}(k) $ and $ b\in k(\mm) $.
		\item Finally, we put $ \hat{j}:=k^{-1}hk $.
	
\end{itemize}
\end{itemize} 
\end{proof}
\begin{rem}
If we let $ j $ be the trivial automorphism of $ I $,  then Theorem 5.4 implies Theorem 2.4(2).
\end{rem}

\end{document}